\newtheorem{theorem}{Theorem}[section]
\newtheorem{lemma}[theorem]{Lemma}
\newtheorem{proposition}[theorem]{Proposition}
\newtheorem*{proposition*}{Proposition}
\newtheorem{corollary}[theorem]{Corollary}
\newtheorem*{corollary*}{Corollary}
\newtheorem*{theorem*}{Theorem}
\theoremstyle{remark}
\newtheorem{remark}[theorem]{Remark}
\newtheorem{definition}[theorem]{Definition}
\newtheorem{question}{Question}
\newenvironment{customthm}[1]
	{\innercustomthm}
	{\endinnercustomthm}
\numberwithin{equation}{section}
\newcommand{\Z}{\mathbb{Z}}
\newcommand{\R}{\mathbb{R}}
\begin{document}
\title[Complete 1-systems and dual cube complexes]{ Curves intersecting exactly once and their dual cube complexes}

\author{Tarik Aougab, Jonah Gaster}

\date{January 31, 2015}

\keywords{Curves on surfaces, Curve systems}

\begin{abstract}

Let $S_{g}$ denote the closed orientable surface of genus $g$. We construct exponentially many mapping class group orbits of collections of $2g+1$ simple closed curves on $S_{g}$ which pairwise intersect exactly once, extending a result of the first author \cite{aougab} and further answering a question of Malestein-Rivin-Theran \cite{m-r-t}. To distinguish such collections up to the action of the mapping class group, we analyze their dual cube complexes in the sense of Sageev \cite{sageev}. In particular, we show that for any even $k$ between $\lfloor g/2 \rfloor$ and $g$, there exists such collections whose dual cube complexes have dimension $k$, and we prove a simplifying structural theorem for any cube complex dual to a collection of curves on a surface pairwise intersecting at most once. 

\end{abstract}

\maketitle

\section{Introduction}
\label{intro}

Let $S=S_{g}$ denote the closed orientable surface of genus $g$, let $\mbox{Mod}^*(S)$ denote the associated \textit{extended mapping class group}, and let $\mathcal{I}(S)$ be the set of isotopy classes of essential simple closed curves on $S$. There is a natural action of $\mbox{Mod}^*(S)$ on $\mathcal{I}(S)$, and analyzing orbits of various finite subsets of $\mathcal{I}(S)$ has proven to be a fruitful way of probing the algebra and geometry of $\mbox{Mod}^*(S)$. For example, the $\mbox{Mod}^*(S)$-orbits of pants decompositions have been used to estimate the Weil-Petersson diameter of the thick part of Moduli space \cite{cavendish-parlier}. 

The main focus of this paper is to explicitly construct many distinct $\mbox{Mod}^*(S)$-orbits of collections of curves with intersection properties that are reminiscent of pants decompositions. We show:

\begin{customthm}{1}
\label{MainThm} For $g\ge 2$, there exist at least $2^{g-3}/(g-1)$ and at most $(4g^2+2g)!$ distinct $\mbox{Mod}^*(S)$-orbits of collections of $2g+1$ simple closed curves pairwise intersecting once. 
\end{customthm}

Malestein-Rivin-Theran have shown that any collection of curves pairwise intersecting once has cardinality at most $2g+1$. Thus we think of such collections as analogous to pants decompositions in the following way: pants decompositions are the maximal cliques of the \textit{curve graph}, $\mathcal{C}(S)$, of $S$ $-$ the graph with vertex set $\mathcal{I}(S)$, and edges between two isotopy classes that can be realized disjointly on $S$. Similarly, the curve systems in Theorem \ref{MainThm} are the largest cliques of the \textit{Schaller} or \textit{systole graph}, $\mathcal{SC}(S)$, of $S$, whose vertices correspond to the subset of $\mathcal{I}$ of non-separating simple closed curves, and whose edges correspond to pairs of curves intersecting exactly once. Note that, unlike pants decompositions, there do exist non-maximum collections of curves pairwise intersecting once that are nonetheless maximal with respect to inclusion.

We remark that the number of $\mbox{Mod}^*(S)$-inequivalent pants decompositions grows at least factorially in $g$ (see \cite{bollobas} for an asymptotically precise count), and we conjecture that the same is true for the types of curve systems considered here. However, though $\mathcal{SC}(S)$ and $\mathcal{C}(S)$ are $\mbox{Mod}^*(S)$-equivariantly quasi-isometric, it is not necessarily the case that the corresponding growth rates of $\mbox{Mod}^*(S)$-inequivalent maximal cliques are comparable. This question requires a more detailed understanding of the specific nature of $\mathcal{SC}(S)$. 

Schaller has shown that the automorphism group of $\mathcal{SC}(S)$ is the (extended) mapping class group \cite{schmutz-schaller}, and thus as a corollary to Theorem \ref{MainThm} we obtain:

\begin{corollary*} \label{Clique} The number of maximum cardinality cliques of $\mathcal{SC}(S)$, inequivalent under the action of $\mbox{Aut}(\mathcal{SC}(S))$, grows at least exponentially in $g$.  
 \end{corollary*}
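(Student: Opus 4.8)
The plan is to deduce the corollary directly from Theorem~\ref{MainThm} together with Schaller's identification of $\mbox{Aut}(\mathcal{SC}(S))$ with $\mbox{Mod}^*(S)$. The first step is the elementary observation that the maximum cardinality cliques of $\mathcal{SC}(S)$ are exactly the collections of curves featured in Theorem~\ref{MainThm}. Indeed, by definition a clique of $\mathcal{SC}(S)$ is a finite set of distinct isotopy classes of non-separating simple closed curves that pairwise intersect exactly once; by the Malestein--Rivin--Theran bound such a set has cardinality at most $2g+1$, and this bound is attained, for instance by any collection produced in the proof of Theorem~\ref{MainThm}. Hence ``maximum clique of $\mathcal{SC}(S)$'' and ``collection of $2g+1$ simple closed curves pairwise intersecting once'' describe the same objects.

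Next I would check that this identification is equivariant. The action of $\mbox{Mod}^*(S)$ on $\mathcal{I}(S)$ preserves geometric intersection numbers and the non-separating property, hence acts on $\mathcal{SC}(S)$ by graph automorphisms, carrying maximum cliques to maximum cliques; under the identification above, two maximum cliques lie in the same $\mbox{Mod}^*(S)$-orbit precisely when the corresponding curve collections do. By Schaller's theorem \cite{schmutz-schaller}, $\mbox{Aut}(\mathcal{SC}(S)) = \mbox{Mod}^*(S)$, so the set of $\mbox{Aut}(\mathcal{SC}(S))$-orbits of maximum cliques is naturally identified with the set of $\mbox{Mod}^*(S)$-orbits of $(2g+1)$-element collections of pairwise-once-intersecting curves. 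Theorem~\ref{MainThm} then gives at least $2^{g-3}/(g-1)$ such orbits, and since $2^{g-3}/(g-1) = 2^g/(8(g-1))$ eventually dominates $c^g$ for any fixed $c<2$, this number grows at least exponentially in $g$, as claimed.

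There is no real obstacle here: all of the mathematical content resides in Theorem~\ref{MainThm} and in the cited work of Schaller. The only point meriting any care is to ensure that the genus lies in the range where Schaller's theorem applies, so that $\mbox{Aut}(\mathcal{SC}(S))$ is genuinely $\mbox{Mod}^*(S)$ and not a strictly larger group in some low-genus exceptional case; if needed, one simply restricts the asymptotic statement to $g$ sufficiently large, which costs nothing.
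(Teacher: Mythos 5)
Your argument is exactly the one the paper intends: the corollary is stated immediately after citing Schaller's result that $\mbox{Aut}(\mathcal{SC}(S)) = \mbox{Mod}^*(S)$, and the identification of maximum cliques with maximum complete $1$-systems plus Theorem~1 is all that is used. The proposal is correct and matches the paper's (implicit) proof.
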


By a $k$-system, we mean any subset $\Gamma \subset \mathcal{I}(S)$ consisting of curves pairwise intersecting at most $k$ times. A \textit{complete} $k$-system is a $k$-system in which any two curves intersect \textit{exactly} $k$ times. Thus the main focus of this paper is the study of complete $1$-systems of maximum possible size, or \textit{maximum complete 1-systems}. 

Malestein-Rivin-Theran showed that such $1$-systems are unique up to the action of $\mbox{Mod}^*(S)$ for $g=1,2$, and they asked if this uniqueness persists for higher genera. The first author answered this question by subsequently constructing two distinct orbits of complete $1$-systems of size $2g+1$ on $S$, for all $g \geq 3$. Thus we view Theorem \ref{MainThm} as a further demonstration of the non-uniqueness of maximum complete $1$-systems. 

Note that `complete'-ness, for the 1-systems we consider, is a significant simplifying assumption. Though there has been substantial recent progress towards estimating the size of maximum 1-systems \cite{przytycki}, even asymptotically precise counts are not currently available. While it would be interesting to examine the number of $\mbox{Mod}^*(S)$-orbits of maximum 1-systems, the absence of any examples when $g\ge 3$ makes this seem difficult\footnote{\cite{m-r-t} calculate that there are two $\mbox{Mod}^*(S)$-orbits of maximum 1-systems for $g=2$.}.

Our method of distinguishing $\mbox{Mod}^*(S)$-orbits of a curve system $\Gamma$ is to analyze the \textit{dual cube complex} $C(\Gamma)$ to $\Gamma$, a complex built from cubes of various dimensions which encodes the combinatorics of the intersections between curves. This invariant is a useful way of organizing topological information about the $\mbox{Mod}^*(S)$-orbit of $\Gamma$. Along the way in our analysis, we show:

\begin{customthm}{2} \label{CubeChar} Let $\Lambda_{1}, \Lambda_{2}$ be any two collections of curves which fill a closed surface $S$. Then $\Lambda_{1}$ and $\Lambda_{2}$ are equivalent under the action of the extended mapping class group if and only if there is an isomorphism of cube complexes $C(\Lambda_{1}) \cong C(\Lambda_{2})$. The induced set map from $\Lambda_1$ to $\Lambda_2$ corresponds to the induced map between hyperplanes of $C({\Lambda_1})$ and hyperplanes of $C({\Lambda_2}$).
\end{customthm}

Thus, the reader may view the main result as a construction of many non-isomorphic cube complexes, each dual to a maximum complete $1$-system. In particular, we show:

\begin{proposition*} \label{CubeReal} For any even $k \in [\lfloor g/2\rfloor,g]$, there exists a complete $1$-system of size $2g+1$ on $S_{g}$ whose dual cube complex has dimension $k$. 
\end{proposition*}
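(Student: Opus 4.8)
The plan is to reduce the dimension computation to combinatorics via the structural theorem, and then to locate the required examples inside (a specialization of) the family of maximum complete $1$-systems built to prove Theorem \ref{MainThm}.

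\textbf{Step 1 (reduction).} The dimension of a CAT(0) cube complex associated to a wallspace is the largest number of pairwise-crossing walls, so $\dim C(\Gamma)$ is the largest $m$ such that some $m$ curves of $\Gamma$ admit lifts to $\widetilde{S}$ that pairwise cross. The structural theorem for cube complexes dual to curves pairwise intersecting at most once then does the work: any such sub-collection $\Gamma' \subseteq \Gamma$ fills an essential subsurface $\Sigma \subseteq S_g$ on which $\Gamma'$ restricts to a complete $1$-system, and the pairwise-crossing hypothesis forces its complementary regions in $\Sigma$ to be large enough that $m$ is bounded well below the a priori ceiling $2\,\mathrm{genus}(\Sigma)+1$; in particular one obtains $m \le g$ once $g \ge 2$ (matching the top of the claimed range). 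Hence it suffices to exhibit, for each even $k \in [\lfloor g/2\rfloor, g]$, a complete $1$-system of size $2g+1$ on $S_g$ whose largest pairwise-crossing sub-collection has size exactly $k$.

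\textbf{Step 2 (construction).} I would take the one-parameter subfamily $\{\Gamma_j\}$ of the systems underlying Theorem \ref{MainThm}, where the parameter $j$ counts how many times one has applied the local surgery on the intersection pattern that produces the distinct $\mbox{Mod}^*(S)$-orbits there. The points to check are: (i) each $\Gamma_j$ is again a complete $1$-system of size $2g+1$ (no bigons are introduced between the modified curves and the remaining ones); (ii) there is a top value $k_0 \in \{g-1,g\}$, chosen even, realized by $\Gamma_0$, and each surgery lowers the size of the maximal pairwise-crossing family by exactly two, so $\dim C(\Gamma_j) = k_0 - 2j$; and (iii) as $j$ runs over the integers with $k_0 - 2j \ge \lfloor g/2\rfloor$, the values $k_0 - 2j$ exhaust the even integers in $[\lfloor g/2\rfloor, g]$ — a brief check of cases confirms that the two endpoints line up.

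\textbf{Step 3 (the obstacle).} The heart of the argument is the upper bound in (ii): showing that after $j$ surgeries \emph{no} sub-collection of $k_0 - 2j + 1$ curves has pairwise-crossing lifts. This is where the structural theorem must be fed a precise accounting of the surgered intersection pattern — an Euler-characteristic count on any subsurface a hypothetical larger crossing family could fill, together with the fact that each surgery genuinely destroys the crossings it is designed to destroy — in order to rule out a bigger cube. The matching lower bound (producing $k_0 - 2j$ explicit pairwise-crossing lifts) and the bigon check in (i) are comparatively routine but still have to be carried out. I would also expect the restriction to even $k$ to be an artifact of the symmetry of the maximal crossing families in these particular examples, rather than of the range $[\lfloor g/2\rfloor, g]$ itself.
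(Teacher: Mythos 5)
Your proposal is an outline rather than a proof, and its one substantive claim is wrong. The gap is twofold. First, Step 1 misuses the structural theorem: Theorem \ref{3-to-n} is an equivalence (a $1$-system spans a top-dimensional cube if and only if every triple does), and in the paper it is used to \emph{produce} large cubes from triple data, not to bound dimension from above; no Euler-characteristic count on a filled subsurface gives your asserted bound $m\le g$. Indeed that bound is false for the very systems constructed in the paper: in $\Gamma(1,\ldots,1)$ the $2g$ up curves $A$ have the property that every triple forms a triangle (Lemma \ref{3cubes}), so by Corollary \ref{3-to-n subset} they span a $2g$-cube, and Lemma \ref{max cubes 2} records this as a maximal cube. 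Whatever normalization the stated range $[\lfloor g/2\rfloor,g]$ is meant to carry, it cannot be recovered by the subsurface argument you sketch, so your Step 1 ``reduction'' does not establish the upper-bound half of anything. Second, Steps 2 and 3 explicitly defer the entire content of the proof: you never specify the curve systems, never verify they are complete $1$-systems, and you say yourself that both the upper bound (no larger pairwise-crossing family) and the lower bound remain to be carried out. A plan that postpones exactly these verifications is not a proof.

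For comparison, the paper's argument is completely concrete. The systems are the $\Gamma(\epsilon)$ of \S\ref{construction section}, where each $\epsilon_i\in\{1,-1\}$ selects a partner pair of up or down curves in the $i$th handle, together with $\delta$; Lemma \ref{examples mc1s} gives completeness. Lemma \ref{3cubes} classifies all triples forming $3$-cubes by exhibiting (or excluding) triangles via Lemma \ref{tool1}, and then Corollary \ref{3-to-n subset} converts this triple data into the exact list of maximal cubes (Lemmas \ref{max cubes} and \ref{max cubes 2}): their dimensions are $|A(\epsilon)|=2|\epsilon^{-1}(1)|$, $|B(\epsilon)|=2|\epsilon^{-1}(-1)|$, and $5$ (or $3$). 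Hence $\dim C(\Gamma(\epsilon))=\max\bigl(|A(\epsilon)|,|B(\epsilon)|\bigr)$ up to the small exceptional cases, and this is tuned in steps of two by flipping entries of $\epsilon$ --- this is the correct version of your ``each surgery changes the dimension by exactly two,'' but it comes with both bounds already proved by the maximal-cube computation, not left as an obstacle. Even genus is handled by the stabilization of \S\ref{stabilizing} (Lemmas \ref{stabilized 3-cubes general 1}, \ref{stabilized 3-cubes}, Proposition \ref{stabilizing Gammas}), not by a separate surgery count. You would need to supply all of this (or an equivalent computation) for your proposal to become a proof; as written, the key inequality you flag in Step 3 is exactly what Lemmas \ref{3cubes}--\ref{max cubes 2} are for, and your proposed replacement for them does not work.
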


It is interesting to consider whether or not the dimension of the cube complex dual to a maximum complete 1-system grows with the genus. At the moment this problem seems difficult. As a first step, one might determine whether there exists a maximum complete $1$-system $\Gamma$ whose dual cube complex is $2$-dimensional:

\begin{question} \label{2d} Does there exist a maximum complete $1$-system whose dual cube complex is $2$-dimensional?
\end{question}

In this case, the quotient of the dual cube complex $C(\Gamma)$ by the action of $\pi_{1}(S)$ produces a square-tiled copy of $S$, with at least $4$ squares around each vertex. We conjecture that the answer to Question \ref{2d} is no. 

In general, $C(\Gamma)$ can be a very complicated combinatorial object; indeed, one may interpret this as a consequence of Theorem \ref{CubeChar}, since $\mbox{Mod}^*(S)$-orbits of curve systems can be difficult to distinguish (cf.~\cite{levitt-vogtmann}). In order to leverage $C(\Gamma)$ to useful information about $\mbox{Mod}^*(S)$, we make use of the following simplifying theorem for cube complexes dual to $1$-systems:

\begin{customthm}{3} \label{3-to-n} Suppose $\Gamma= \left\{\gamma_{1},...,\gamma_{n}\right\}$ is any $1$-system on an orientable surface $S$, possibly with boundary. Then the dimension of $C(\Gamma)$ is $n$ if and only if the dimension of $C(\Gamma')$ is $3$, for $\Gamma'$ any triple of curves in $\Gamma$. 
\end{customthm}

\begin{remark} We note that one direction of Theorem \ref{3-to-n} is immediate: if the dimension of the entire cube complex is $n$, then any three curves must correspond to a $3$-cube in the dual cube complex. However, as Figure \ref{necessity1system} in Section \ref{qualitative info section} demonstrates, the converse is false if the assumption of being a $1$-system is dropped.  
\end{remark}

Our main construction requires $g$ to be odd, and we extend the conclusion of Theorem \ref{MainThm} and Proposition \ref{CubeReal} to even $g$ via the following simple process (see \S\ref{stabilizing} for a slightly more careful description):

Beginning with a complete $1$-system $\Gamma$ of size $2g+1$ on $S_{g}$ for $g= 2k+1$, excise a pair of small open disks which are locally on opposite sides of some $\gamma \in \Gamma$, and glue on an annulus $A$ along the resulting boundary circles. Note that $\Gamma$ is still a complete $1$-system on $S_{g+1}$, and we extend $\Gamma$ to a collection of $2g+3$ curves by adding $\gamma', \gamma''$, defined as follows: both $\gamma', \gamma''$ run parallel to $\gamma$ in the complement of the new annulus $A$. Within $A$, both $\gamma', \gamma''$ run from one boundary component of $A$ to the other, intersecting once in the interior of $A$. 

Therefore $\gamma', \gamma''$ intersect each other exactly once, within $A$, and each intersects all of the original elements of $\Gamma$ exactly once because $\gamma$ does. If $\Gamma$ on $S_{g}$ is obtained from a complete $1$-system $\Gamma'$ on $S_{g-1}$ as described above, we call $\Gamma$ a \textit{stabilization} of $\Gamma'$. 

It is natural to ask whether or not \textit{every} complete $1$-system on $S_{g}$ of size $2g+1$ is obtained from one on $S_{g-1}$ of size $2g-1$ via this process:

\begin{question} \label{Stab} Let $\Gamma$ be a complete $1$-system on $S_{g}$ of size $2g+1$. Is it always the case that $\Gamma$ is a stabilization of some complete $1$-system on $S_{g-1}$?
\end{question}

We observe that each complete $1$-system we construct is indeed a stabilization of a complete $1$-system on a lower genus surface. Furthermore, we note that a positive answer to Question \ref{Stab} implies a negative answer to Question \ref{2d}: Lemmas \ref{tool1} and \ref{stabilized 3-cubes general 1} imply that the dimension of the cube complex dual to any complete $1$-system obtained via stabilization is at least three.

\textbf{Organization of paper.} In \S\ref{background} and \S\ref{cube cplx construction section} we outline some preliminary notions regarding the mapping class group and Sageev's construction of dual cube complexes. In \S\ref{cube complex characterizes section}, we prove Theorem \ref{CubeChar}; in \S\ref{qualitative info section}, we prove Theorem \ref{3-to-n}; in \S\ref{construction section}, we outline the main construction of our complete $1$-systems; in \S\ref{using dual cube complex section}, \S\ref{polygon section}, and \S\ref{stabilizing} we prove that these complete $1$-systems are indeed inequivalent up to the action of the extended mapping class group, completing the proof of Theorem \ref{MainThm}.

\textbf{Acknowledgements.} Both authors thank David Dumas for numerous helpful conversations and suggestions. The second author thanks as well Peter Shalen for pointing out the relevance of Sageev's invariant, and Marc Culler and Daniel Groves for their patience and time.

\section{background}
\label{background}
Let $\Gamma=\{\gamma_{1},\ldots,\gamma_{n}\}$ be a collection of free homotopy classes of closed curves on $S$. Recall that a collection of curves form a \emph{bigon} if there is an embedded disk in $S$ whose boundary is the union of two arcs of the curves. A \emph{minimal position realization} of $\Gamma$ is a set $\lambda=\{\eta_{1},\ldots,\eta_{n}\}$ such that:
\begin{enumerate}[(i)]
\item Each $\eta_{i}:S^{1}\to S$ is a smooth immersion in the free homotopy class $\gamma_{i}$.
\item The union $\bigcup_{i}\eta_{i}(S^{1})$ forms no bigons.
\item The immersed submanifolds $\{\eta_{1}(S^{1}),\ldots,\eta_{n}(S^{1})\}$ intersect only at transverse double points.
\end{enumerate}
We will refer to minimal position realizations simply as \emph{realizations}. In everything that follows, we suppose that $\lambda=\{\eta_{1},\ldots,\eta_{n}\}$ is a realization of $\Gamma$. Condition (ii) above implies that $\lambda$ minimizes the sum of the pairwise geometric intersection numbers of the curves in $\Gamma$ \cite[Prop.~1.7, p.~31]{farb-margalit}. See \cite[Ch.~1]{farb-margalit} for background on curves on surfaces.

Let $\Lambda$ indicate the set of the lifts of elements of $\lambda$ to the universal cover $\widetilde{S}$, so that the elements of $\Lambda$ are curves in $\widetilde{S}$. The union of the curves in $\Lambda$ may be considered as an embedded graph $G\subset\widetilde{S}$. Condition (iii) above guarantees that each vertex of this graph has valence four, and condition (ii) implies that every pair of curves in $\Lambda$ intersect at most once (see \cite[Lemma 1.8, p.~30]{farb-margalit}).  

When the curves in $\Gamma$ are disjoint, then the dual graph to the lifts $\Lambda$ admits an isometric action of $\pi_1 S$, and the quotient graph is an invariant for $\Gamma$ that can be used to distinguish mapping class group orbits. However, in general the dual graph to $\lambda$ in $S$ is not an invariant of $\Lambda$, as different realizations may yield non-isomorphic graphs. For example, the presence of a triangle in the complement of $\lambda$ allows a Reidemeister type III move, creating a new realization but changing the isomorphism types of the dual graph, as shown in Figure \ref{reidemeister}.

\begin{figure}[h]
	\begin{minipage}[]{.45\linewidth}
		\centering
		\vspace{.2cm}
		\includegraphics[width=5cm]{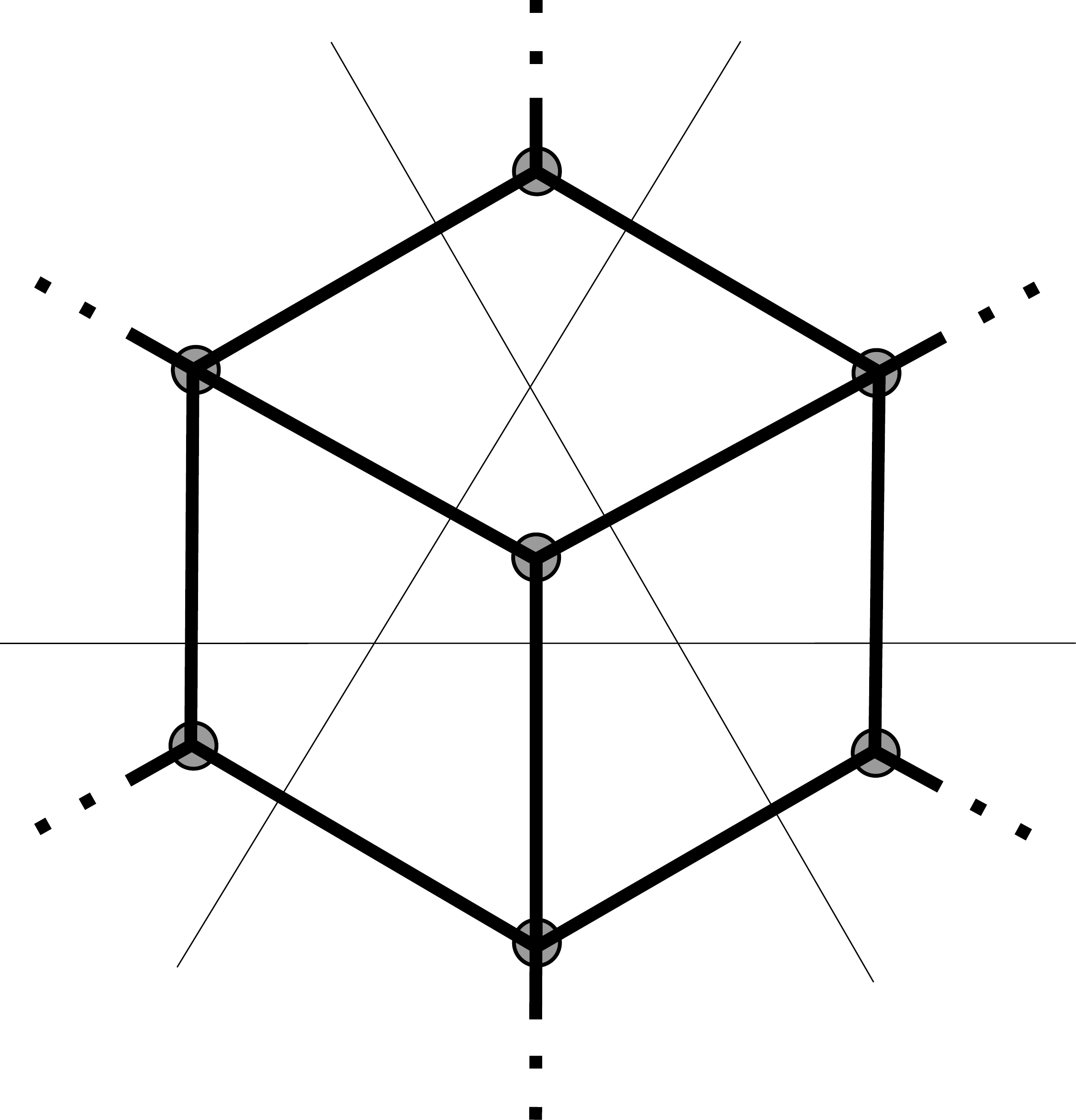}
		\vspace{.4cm}
	\end{minipage}
	\hspace{.5cm}
	\begin{minipage}[]{.45\linewidth}
		\centering
		\vspace{.2cm}
		\includegraphics[width=5cm]{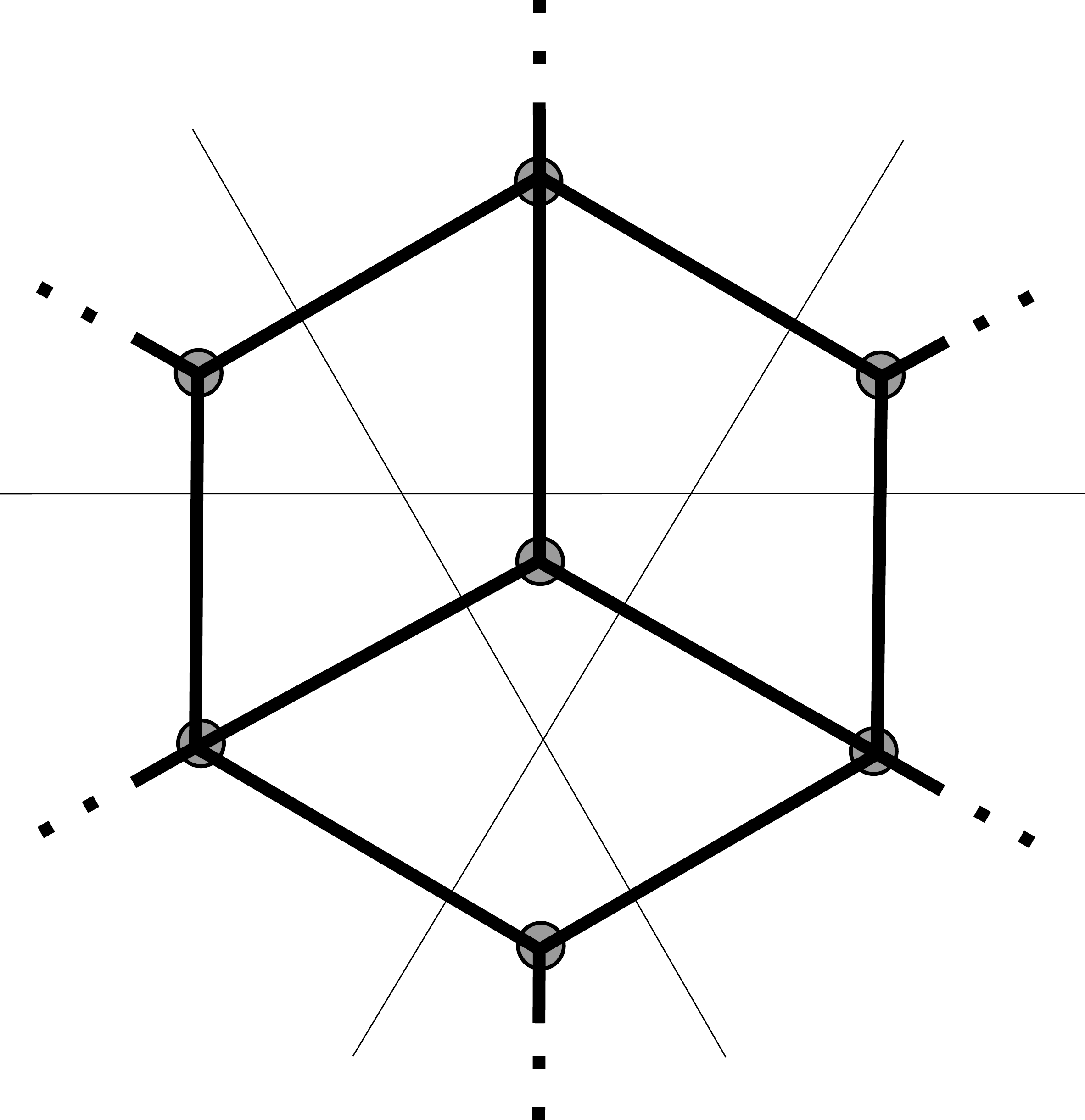}
		\vspace{.4cm}
	\end{minipage}
\caption{Changing the realization $\lambda$ by a Reidemeister type III move changes the isomorphism type of the dual graph.}
\label{reidemeister}
\end{figure}

While the dual graph depends essentially on the realization $\lambda$, we now describe the construction of a related cube complex `dual' to $\Lambda$ which is independent of realization. Originally due to Sageev, this produces an isometric action of $\pi_{1}S$ on a finite-dimensional cube complex, which we denote by $\widetilde{C(\lambda)}$, with quotient $C(\Lambda):=\widetilde{C(\lambda)}/\pi_{1}S$, a cube complex with finitely many maximal cubes. Though it will be unnecessary in this work, when $\Lambda$ is filling $C(\Lambda)$ is non-positively curved, and this construction can be placed in a considerably more general context (see \cite{sageev}, \cite{sageev2}, and \cite{chatterji-niblo}).

Recall that a \emph{cube} (of dimension $n$) is the cell complex $[-1,1]^{n}$, and a \emph{cube complex} is the quotient of disjoint cubes by a gluing map which is a Euclidean isometry on each cell. In such a complex, a \emph{maximal cube} is a cube which is not contained in a higher dimensional cube. The \emph{dimension} of a cube complex is the supremum of the dimensions of its cubes. A \emph{square complex} is a two-dimensional cube complex. See \cite{sageev2} for an introduction to cube complexes.

The \emph{local hyperplanes} of an $n$-cube are the intersections of $[-1,1]^{n}$ with coordinate planes. We introduce an equivalence relation on local hyperplanes in a cube complex: Two local hyperplanes are \emph{hyperplane equivalent} if they intersect along the $1$-skeleton of the cube complex, and a \emph{hyperplane} of a cube complex is the hyperplane equivalence class of a local hyperplane. An isomorphism of cube complexes is a homeomorphism that is a cellular Euclidean isometry, which thus preserves hyperplanes. We will denote the hyperplanes of a cube complex $C$ by $\mathcal{H}_{C}$. Note that $1$-dimensional cube complexes are graphs and trees, whose hyperplanes (and local hyperplanes) are midpoints of edges. 

\section{Sageev's construction}
\label{cube cplx construction section}

Fix a choice of realization $\lambda$ of $\Gamma$, with lifts $\Lambda$ as before. We now describe the construction of a cube complex $\widetilde{C(\lambda)}$ that is independent of the choice of realization. Each element $\gamma\in\Lambda$ separates $\widetilde{S}\setminus\gamma$ into two connected components. Fix a choice of identification of these two half-spaces with $\{1,-1\}$ for each $\gamma\in\Lambda$. We will refer to a choice of one of these two as a \emph{labeling} of $\gamma$, and a labeling of $\Lambda$ is a labeling for each of the curves of $\Lambda$. Identifying $2$ with $\{1,-1\}$, we say a labeling $v\in2^{\Lambda}$ is \emph{admissible} if the half-spaces $v(\alpha)$ and $v(\beta)$ intersect for every pair of curves $\alpha,\beta\in\Lambda$.

Let $\mathcal{V}_{0}\subset2^{\Lambda}$ denote the collection of admissible labelings. Note that for each connected region $U\subset\widetilde{S}\setminus\Lambda$, there is an admissible labeling $v_{\lambda}(U)\in\mathcal{V}_{0}$ defined as follows: For each curve in $\Lambda$, $v_{\lambda}(U)$ chooses the half-space containing $U$. Let $\mathcal{V}_1'$ be the graph whose vertex set is $\mathcal{V}_{0}$, where two labelings are joined by an edge when they differ on exactly one element of $\Lambda$. Hence there is an element of $\Lambda$ associated to each edge of $\mathcal{V}_1'$.  

Choose a connected region $U\subset\widetilde{S}\setminus\Lambda$, and let $\mathcal{V}_1$ denote the connected component of $v_{\lambda}(U)$ in $\mathcal{V}_1'$. (The choice of connected region $U$ is evidently not essential). When lifts $\gamma_1 , \ldots , \gamma_n \in \Lambda$ pairwise intersect, it is straightforward to check that $\mathcal{V}_1$ contains an embedded copy of the 1-skeleton of $[-1,1]^n$. We obtain the cube complex $\widetilde{C(\lambda)}$ by adding in the interior of any cube whose $1$-skeleton is contained in $\mathcal{V}_{1}$; the links of the resulting cube complex are all flag simplicial complexes (see \cite{chatterji-niblo} for a more detailed exposition). 

The action of $\pi_{1}S$ on $\Lambda$ induces a permutation action of $\pi_{1}S$ on $2^{\Lambda}$: For $g\in\pi_{1}S$ and admissible labeling $v\in\mathcal{V}_0$, the labeling $g\cdot v\in\mathcal{V}_0$ is given by $(g\cdot v)(\gamma)=g^{-1}\cdot v(g\cdot\gamma)$ for each $\gamma\in\Lambda$. Since the action on $\widetilde{S}$ is by deck transformations, it is straightforward to check that the action of $\pi_{1}S$ preserves the admissible set $\mathcal{V}_0$ and induces an action by graph automorphisms on the $1$-skeleton $\mathcal{V}_1$. This action extends to $\widetilde{C(\lambda)}$ by definition.  

The complex $\widetilde{C(\lambda)}$ with an action of $\pi_1 S$ is independent of the realization $\lambda$: a choice of half space for a lift $\gamma\in\Lambda$ determines, and is determined by, a choice of complement of $\partial \gamma \subset S^1=\partial \pi_1 S$. By [F-M, bigons], choices of half-spaces for two lifts $\gamma,\gamma'\in\Lambda$ intersect if and only if the endpoints of $\gamma$ link with those of $\gamma'$ on $S^1$. The latter is independent of the choice of realization. We thus denote $\widetilde{C(\lambda)}$ by $\widetilde{C(\Gamma)}$ when convenient.

Moreover, it is immediate that $\widetilde{C(\Gamma)}$ is non-positively curved. A direct argument shows that any loop in $\widetilde{C(\Gamma)}$ must backtrack, which implies that the complex is also simply-connected, and thus CAT(0). We collect the relevant information about $\widetilde{C(\Gamma)}$:

\begin{theorem}[Sageev]
\label{sageev's}
{Suppose that $\Gamma$ is a filling collection of curves. Then the cube complex $\widetilde{C(\Gamma)}$ is $CAT(0)$, and the action of $\pi_{1}S$ is free, properly discontinuous, and cocompact. Given realization $\lambda$, there is a $\pi_{1}S$-equivariant incidence-preserving identification of $\mathcal{H}_{\widetilde{C(\Gamma)}}$ with the lifts $\Lambda$.}
\end{theorem}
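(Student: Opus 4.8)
The plan is to obtain this as Sageev's theorem for the wallspace on $\widetilde{S}$ whose walls are the lifts $\Lambda$, supplying the finiteness inputs specific to the present setting. I would first settle the $CAT(0)$ claim, for which the combinatorial content is already in place above: the links of $\widetilde{C(\Gamma)}$ are flag, and every loop in its $1$-skeleton backtracks, so $\widetilde{C(\Gamma)}$ is non-positively curved and simply connected; it remains only to see that it is finite-dimensional. For this, note that a $d$-cube of $\widetilde{C(\Gamma)}$ is recorded by a set of $d$ lifts in $\Lambda$ that pairwise cross, and since each $\eta_{i}$ is a simple immersion its own lifts are pairwise disjoint, so no two of these $d$ lifts project to a common $\gamma_{i}$; hence $d\le n$. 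Gromov's link condition together with the Cartan--Hadamard theorem for cube complexes then gives $CAT(0)$ --- a step that uses finite-dimensionality only, not the filling hypothesis.

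For the action, I would apply Sageev's theorem: $\pi_{1}S$ acts on $\widetilde{S}$ freely, properly discontinuously and cocompactly, and the lifts $\Lambda$ fall into $n$ orbits under this action, so the remaining point is that $\Gamma$ being filling makes the wallspace ``proper'' in the required sense, i.e. that the wall-pseudometric on $\widetilde{S}$ --- the number of lifts separating two points --- is proper and quasi-isometric to the path metric. One inequality is easy: fixing a hyperbolic metric on $S$ so that the lifts become bi-infinite geodesics pairwise crossing at most once, a geodesic segment of $\widetilde{S}$ projects to a finite-length arc in $S$ and hence crosses $\bigcup_{i}\eta_{i}(S^{1})$ only finitely often. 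The reverse inequality --- that points far apart in $\widetilde{S}$ are separated by many lifts --- is exactly where filling enters, since the components of $S\setminus\lambda$ are disks of bounded diameter and so a long geodesic cannot avoid crossing curves of $\Gamma$ for long. Granting this, Sageev's machinery transports the free, properly discontinuous, cocompact action on $\widetilde{S}$ to one on $\widetilde{C(\Gamma)}$; concretely, the canonical $0$-cubes $v_{\lambda}(U)$ associated to the complementary regions $U$ of $\widetilde{S}\setminus\Lambda$ form finitely many $\pi_{1}S$-orbits of $0$-cubes because $S\setminus\lambda$ is a finite union of disks, and the coarse density of the canonical $0$-cubes among all $0$-cubes --- the statement doing the heavy lifting, and the step I expect to be the main obstacle --- follows from properness of the wall-pseudometric. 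Freeness of the resulting action is then formal: a properly discontinuous action has finite point stabilizers, and $\pi_{1}S$ is torsion-free, so every point stabilizer is trivial.

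Finally, I would read the identification of $\mathcal{H}_{\widetilde{C(\Gamma)}}$ with $\Lambda$ directly off the construction. Each edge of $\mathcal{V}_{1}$ carries a natural label --- the unique lift on which its two endpoints disagree --- and by construction each square of $\widetilde{C(\Gamma)}$ is spanned by the two pairs of opposite edges carrying the two (necessarily crossing) lifts defining it, so this label is constant along hyperplane-equivalence classes of edges and descends to a map $\mathcal{H}_{\widetilde{C(\Gamma)}}\to\Lambda$. I would then check that this map is surjective because $\Gamma$ fills, so every $\gamma_{i}$, hence every lift, is crossed by another one; injective because a hyperplane labeled $\gamma$ induces on the $0$-cubes the partition $\{v: v(\gamma)=1\}\sqcup\{v: v(\gamma)=-1\}$, which recovers $\gamma$; $\pi_{1}S$-equivariant because $\pi_{1}S$ acts on edges of $\mathcal{V}_{1}$ through its action on the labels $\Lambda$; and incidence-preserving because, again by construction, two hyperplanes span a square of $\widetilde{C(\Gamma)}$ exactly when the lifts labeling them cross in $\widetilde{S}$.
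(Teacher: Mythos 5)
The paper offers no proof of this statement at all: it is quoted as Sageev's theorem, with only the CAT(0) part sketched in the preceding paragraph (flag links plus the remark that every loop in the $1$-skeleton backtracks, hence simple connectivity) and the curves-to-hyperplanes correspondence read off from the construction in \S\ref{cube cplx construction section}. Your proposal follows the same route: treat $\Lambda$ as a $\pi_{1}S$-invariant wallspace with finitely many orbits of walls, show that the filling hypothesis makes the wall pseudometric proper and comparable to the path metric on $\widetilde{S}$, and then hand the cocompactness statement (coarse density of the canonical vertices $v_{\lambda}(U)$, which you rightly flag as the heavy step) to Sageev's machinery, exactly as the paper implicitly does by citation. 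The hyperplane identification via edge labels is the standard argument and is consistent with how the paper uses the correspondence later. So in method there is no real divergence; you have simply made the hypothesis-checking explicit.

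One step as written does not match the stated generality. The theorem hypothesizes only a filling collection of closed curves, and in \S\ref{background} the $\eta_{i}$ are immersions, not embeddings; Theorem \ref{CubeChar}, which invokes this theorem, is likewise stated for arbitrary filling collections. For a non-simple curve, distinct lifts of the same $\gamma_{i}$ can cross (each self-intersection of the geodesic representative is a crossing of two distinct lifts), so your bound $\dim\widetilde{C(\Gamma)}\le n$ fails --- already a single filling non-simple curve dualizes to a complex of dimension at least $2$. Finite-dimensionality is still true, but needs a different argument: realize the curves geodesically, observe that two distinct lifts of closed geodesics of bounded length cannot fellow-travel for arbitrarily long (by discreteness, a limit would produce two distinct axes sharing an ideal endpoint), so the crossing points of any family of pairwise-crossing lifts lie in a ball of uniformly bounded radius, and local finiteness of $\Lambda$ then bounds the size of the family. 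Your ``simple $\Rightarrow$ lifts of one curve are disjoint'' reasoning is precisely what the paper uses in Lemma \ref{tool1}, and it is valid in all of the paper's applications (1-systems), but for the statement as given you should either supply the general finite-dimensionality argument or note explicitly that you are restricting to simple curves.
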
 

In what follows, we do not return to the combinatorial definition of $C(\Gamma)$.  We will invoke the correspondence in Theorem \ref{sageev's} often, which we may briefly refer to as the \emph{curves to hyperplanes correspondence}.

\section{Mapping class group orbits of collections of curves}
\label{cube complex characterizes section}
We now characterize a filling curve system from the isomorphism type of its dual cube complex. Recall the \emph{extended mapping class group} $\mbox{Mod}^{*}(S)=\mathrm{Diff}(S)/\mathrm{Diff}_0(S)$. We recall Theorem \ref{CubeChar}:

\begin{customthm}{2}
{Two filling curve systems $\Lambda_1$ and $\Lambda_2$ are equivalent under the action of $\mbox{Mod}^{*}(S)$ if and only if there is an isomorphism of cube complexes $C({\Lambda_1})\cong C({\Lambda_2})$. The induced set map from $\Lambda_1$ to $\Lambda_2$ corresponds to the induced map between hyperplanes of $C({\Lambda_1})$ and hyperplanes of $C({\Lambda_2}$).}
\end{customthm}

\begin{proof}
{One direction is straightforward: Suppose $\phi\cdot\Lambda_1=\Lambda_2$ for $\phi\in \mbox{Mod}^{*}(S)$. Choose a realization $\lambda_{1}$ for $\Lambda_{1}$, and a homeomorphism $\phi'$ realizing $\phi$. In this case $\phi'$ induces a $\pi_{1}S$-equivariant isomorphism $\widetilde{\phi'}:\widetilde{C(\lambda_1)}\cong\widetilde{C(\phi\cdot\lambda_1)}$, where the induced map of hyperplanes corresponds to the set map induced by $\phi'$ from $\lambda_1$ to $\phi'\cdot\lambda_1$. Since $\phi'\cdot\lambda_{1}$ is a realization of $\Lambda_{2}$, we have $C(\Lambda_{1})\cong C(\lambda_{1})$ and $C(\Lambda_{2})\cong C(\phi\cdot\lambda_{1})$. The result follows.

On the other hand, suppose $\Phi:C(\Lambda_1)\cong C(\Lambda_2)$ is an isomorphism of cube complexes. Choose realizations $\lambda_{1}$ and $\lambda_{2}$ for $\Lambda_{1}$ and $\Lambda_{2}$. By Theorem \ref{sageev's}, we have that $\widetilde{C(\Lambda_{i})}$ is simply-connected and the action of $\pi_{1}S$ is free and properly discontinuous, for each $i=1,2$. Thus $\widetilde{C(\Lambda_{i})}$ is the universal cover of $C(\Lambda_{i})$ and there are isomorphisms $\pi_{1}S\cong\pi_{1}C(\Lambda_{i})$ for each $i$. Composing these isomorphisms with $\Phi_{*}$, we find an automorphism $\phi_{*}:\pi_{1}S\to\pi_{1}S$. By construction, we may lift $\Phi$ to a $\phi_{*}$-equivariant isomorphism of cube complexes $\widetilde{\Phi}:\widetilde{C(\Lambda_{1})}\to\widetilde{C(\Lambda_{2})}$, in the sense that $$\widetilde{\Phi}(g\cdot x)=\phi_{*}(g)\cdot\widetilde{\Phi}(x)$$
for each $x\in\widetilde{C(\Lambda_{1})}$ and $g\in\pi_{1}S$. This induces a corresponding equivariant map on hyperplanes, which in turn induces a correspondence of the collections of conjugacy classes of stabilizers of hyperplanes of $\widetilde{C(\Lambda_{1})}$ with those of $\widetilde{C(\Lambda_{2})}$. 

The curves to hyperplanes correspondence guarantees that there is an identification of the collection of conjugacy classes of hyperplane stabilizers of $\pi_{1}S$ acting on $\widetilde{C(\Lambda_{i})}$ with the collection of conjugacy classes of stabilizers of curves in $\widetilde{\lambda_{i}}$, and so we arrive at a correspondence of the collection of conjugacy classes of stabilizers of curves in $\widetilde{\lambda_{1}}$ with those of $\widetilde{\lambda_{2}}$. The conjugacy classes of the stabilizers of curves in $\widetilde{\lambda_{i}}$ are naturally identified with the collection of conjugacy classes of $\Lambda_{i}$ in $\pi_{1}S$, from which it follows that the automorphism $\phi_{*}$ of $\pi_{1}S$ takes the conjugacy classes determined by $\Lambda_{1}$ to those of $\Lambda_{2}$.  By the Dehn-Nielsen-Baer Theorem \cite[Ch.~8, Thm.~8.1]{farb-margalit}, there is $\phi\in \mbox{Mod}^{*}(S)$ inducing $\phi_{*}$, so that $\phi\cdot\Lambda_{1}=\Lambda_{2}$.}
\end{proof} 

\section{Recognizing $n$-cubes dual to a 1-system}
\label{qualitative info section}
There remains the problem of recognizing quantitative information about $C(\Lambda)$ from a given set of curves $\Lambda$. In this section, we prove Theorem \ref{3-to-n}, giving a criterion for recognizing the dimensions of cubes in the complex dual to a 1-system.

A realization of a collection of curves forms a \emph{triangle} if there is an embedded disk on $S$ whose boundary components are three arcs of the curves, and so that these arcs intersect pairwise exactly once on the boundary of the disk. A collection of homotopy classes of curves forms a triangle if there is a realization of the curves that forms a triangle.

\begin{lemma}
\label{triangles}
{If a collection of homotopy classes of closed curves forms a triangle then every realization of the curves forms a triangle.}
\end{lemma}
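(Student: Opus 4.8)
\emph{Proposal.} The plan is to replace ``$\Gamma$ forms a triangle via a given realization $\lambda$'' by a condition on the lifts of $\Gamma$ to $\widetilde S$ that visibly depends only on the configuration of their endpoints on $S^1=\partial\pi_1 S$, and hence not on $\lambda$.

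\emph{Step 1 (reformulation).} I would first show that $\Gamma$ forms a triangle via $\lambda$ if and only if there are three \emph{distinct} lifts $\ell_1,\ell_2,\ell_3$ of curves of $\Gamma$ (with respect to $\lambda$) that pairwise intersect --- necessarily exactly once, by condition (ii) and \cite[Lem.~1.8]{farb-margalit} --- such that, writing $\alpha_i$ for the compact subarc of $\ell_i$ between its two intersection points with $\ell_j,\ell_k$, the simple closed curve $c=\alpha_1\cup\alpha_2\cup\alpha_3$ maps injectively to $S$ under the covering $p\colon\widetilde S\to S$. For the forward implication, a triangle $D\subset S$ lifts homeomorphically (as $D$ is a disk); its three boundary arcs lie on three lifts, which are distinct because lifts are embedded while the vertices of the triangle are transverse double points, so a common lift would self-intersect; and $p$ restricted to the lifted disk is injective. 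For the converse, $c$ is embedded (distinct lifts meet at most once), so it bounds a disk $\widetilde D\subset\widetilde S$; the curve $p(c)$ is null-homotopic (as $c$ bounds $\widetilde D$) and simple (as $p|_c$ is injective), hence bounds a disk $E\subset S$; since the lifts of $E$ are disks and one of them has boundary $c$, uniqueness of the disk bounded by a simple closed curve in $\widetilde S$ forces $\widetilde D$ to be that lift, so $p$ carries $\widetilde D$ homeomorphically onto the triangle $E$, whose boundary arcs $p(\alpha_i)$ meet pairwise exactly once, at $p(\ell_i\cap\ell_j)$.

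\emph{Step 2 (independence from $\lambda$).} The remaining task is to see that both conditions in Step~1 are realization-independent. A lift is determined by its pair of endpoints on $S^1$, and two lifts intersect iff these pairs link on $S^1$ --- this is exactly the fact used before Theorem~\ref{sageev's} to see that $\widetilde{C(\Gamma)}$ does not depend on $\lambda$. So ``there exist three distinct pairwise-intersecting lifts'' depends only on $\Gamma$; fixing such a triple of endpoint pairs, each realization furnishes lifts $\ell_1,\ell_2,\ell_3$ realizing them. For the injectivity of $p|_c$: a failure of injectivity is a $g\in\pi_1 S\setminus\{1\}$ carrying a point of some $\alpha_i$ to a point of some $\alpha_j$, equivalently the point $g\ell_i\cap\ell_j$ lies on the subarc $\alpha_j$ of $\ell_j$ and its $g^{-1}$-image lies on the subarc $\alpha_i$ of $\ell_i$ (the case $g\ell_i=\ell_j$ amounting to a subarc of one lift containing two points of a single $\pi_1 S$-orbit). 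Each of these is a statement about the cyclic order, along one lift, of its intersection points with finitely many other lifts --- the $\ell_k$ and their $g^{\pm1}$-translates --- and this order is determined by the cyclic order on $S^1$ of the finitely many relevant endpoints. Since this endpoint data and the group $\pi_1 S$ are intrinsic to $\Gamma$, the (non)injectivity of $p|_c$ is the same for every realization, and combining with Step~1 completes the proof.

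The hard part is the verification in Step~2 that the position of an intersection point along a lift, relative to the other intersection points in play, is a function of the endpoint configuration on $S^1$; alongside this comes the bookkeeping for the degenerate sub-cases (a point of $c$ at a vertex of $\widetilde D$; curves of $\Gamma$ that coincide or are conjugate, so that $g\ell_i=\ell_j$ can occur) and the observation --- from cocompactness of the action of $\pi_1 S$ on $\widetilde S$ --- that only finitely many $g$ need be examined.
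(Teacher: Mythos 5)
Your Step 2 rests on the claim that the order, along a lift, of its intersection points with finitely many other lifts is determined by the cyclic order of the relevant endpoints on $S^1$. That claim is false precisely in the situation you need it: when the lifts in play pairwise intersect. With all endpoints fixed, a Reidemeister type III move across the triangle spanned by three pairwise-crossing lifts interchanges, along each of them, the order of its two crossings with the other two, while every pairwise linking (hence every crossing) is unchanged. Endpoint data on $S^1$ determines only \emph{whether} two lifts cross (the fact quoted before Theorem \ref{sageev's}), and relative order with respect to disjoint lifts; but the conditions you reduce to --- whether $g\ell_i\cap\ell_j$ lies on the subarc $\alpha_j$, i.e.\ on which side of the corners $\ell_j\cap\ell_i$, $\ell_j\cap\ell_k$ it sits --- concern mutually crossing lifts and are exactly the kind of data a type III move changes. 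So the assertion that the (non)injectivity of $p|_c$ for a \emph{fixed} triple of lifts is realization-independent is unsubstantiated, and the route you flag as ``the hard part'' cannot work as stated; what is invariant is only the existence of \emph{some} triangle, not the embeddedness of the triangle cut out by a chosen triple (compare the proof of Lemma \ref{tool1}, which must pass to an innermost triangle for precisely this reason). In effect, proving invariance of your reformulated condition, with the triple allowed to vary, is the lemma itself, so the argument begs the question.

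For contrast, the paper's proof is a one-liner: by \cite{graaf-schrijver}, any two realizations differ by isotopies and finitely many Reidemeister type III moves, and each such move preserves the \emph{existence} of a triangle (it destroys one triangle but creates another). A secondary issue with your Step 1: the distinctness of the three lifts and the embeddedness of $c=\alpha_1\cup\alpha_2\cup\alpha_3$ are argued from ``lifts are embedded,'' which holds for simple closed curves but not for the arbitrary closed curves in the statement --- a lift of a non-simple curve can self-intersect, and two sides of a triangle can lie on a single lift --- whereas the move-based proof is insensitive to this.
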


\begin{proof}
{By \cite[Thm.~1]{graaf-schrijver} any two realizations are homotopic through isotopies and finitely many Reidemeister type III moves. Neither of these changes the existence of a triangle in the complement.}
\end{proof}

\begin{lemma}
\label{tool1}
{If $\Gamma=\{\gamma_{1},\gamma_{2},\gamma_{3}\}$ is a realization of simple closed curves on $S$, then $\dim C(\Gamma)=3$ if and only if $S\setminus\Gamma$ has a connected component that is a triangle.}
\end{lemma}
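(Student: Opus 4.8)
The plan is to recast $\dim C(\Gamma)=3$ as a statement about the lifts $\Lambda$ via the curves-to-hyperplanes correspondence, and then match it with the triangle condition in both directions. Throughout I would fix a hyperbolic metric on $S$ (with geodesic boundary if $\partial S\ne\emptyset$) and use the geodesic realization of $\Gamma$: this costs nothing, since $C(\Gamma)$ depends only on the homotopy classes of the $\gamma_i$ (as noted in \S\ref{cube cplx construction section}) and since, by Lemma \ref{triangles}, the existence of a triangle in the complement is realization-independent. For geodesics, two distinct lifts in $\Lambda$ cross (link at infinity) if and only if they intersect, necessarily transversally and in a single point. A $k$-cube of $\widetilde{C(\Gamma)}$ corresponds to $k$ pairwise-crossing lifts, and $\dim C(\Gamma)=\dim\widetilde{C(\Gamma)}$; since distinct lifts of a \emph{simple} closed curve are disjoint, such lifts project to $k$ distinct curves of $\Gamma$. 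As $|\Gamma|=3$, this already forces $\dim C(\Gamma)\le 3$, with $\dim C(\Gamma)=3$ precisely when there are lifts $\widetilde\gamma_1,\widetilde\gamma_2,\widetilde\gamma_3$ of $\gamma_1,\gamma_2,\gamma_3$ that pairwise cross. So it suffices to show that such a triple exists if and only if $S\setminus\Gamma$ has a triangle component.

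For the reverse direction, let $U$ be a triangle component of $S\setminus\Gamma$, so $\overline U$ is an embedded disk bounded by arcs $a_1\subset\gamma_1$, $a_2\subset\gamma_2$, $a_3\subset\gamma_3$ meeting pairwise exactly once and transversally. Since $U$ is simply connected it is evenly covered; a lift $\widetilde U$ has as its frontier a circle built from lifted arcs $\widetilde a_i$, each lying on a single lift $\widetilde\gamma_i$ of $\gamma_i$, with consecutive ones meeting transversally at a single lifted corner. Hence $\widetilde\gamma_1,\widetilde\gamma_2,\widetilde\gamma_3$ pairwise intersect transversally, so they pairwise cross, and we obtain a $3$-cube.

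For the forward direction, suppose $\widetilde\gamma_1,\widetilde\gamma_2,\widetilde\gamma_3$ pairwise cross. Their three pairwise intersection points are distinct (a coincidence would be a triple point, forbidden in a minimal-position realization), and the three geodesic segments joining them bound an embedded geodesic triangle $T\subset\widetilde S$ whose interior misses $\widetilde\gamma_1\cup\widetilde\gamma_2\cup\widetilde\gamma_3$ --- exactly as three pairwise-crossing lines bound a triangle in the plane, by the Jordan curve theorem in the disk $\widetilde S$. The interior $T^{\circ}$ may still be crossed by other lifts of $\Lambda$, and the heart of the argument is to remove them: if $\widetilde\delta\in\Lambda$ meets $T^{\circ}$, then $\widetilde\delta\cap T$ is a geodesic chord joining two distinct open sides of $T$ (distinct, as $\widetilde\delta$ meets each side-line at most once and passes through no vertex), cutting off a strictly smaller geodesic triangle $T'$ whose sides again lie on three pairwise-crossing lifts; moreover the set of lifts meeting $T'^{\circ}$ is a proper subset of the (finite, by local finiteness of the graph $\bigcup\Lambda$ on the compact set $T$) set of lifts meeting $T^{\circ}$, since it no longer contains $\widetilde\delta$. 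Iterating terminates, leaving a geodesic triangle $T$ with sides on pairwise-crossing lifts and $T^{\circ}$ disjoint from all of $\Lambda$, so that $T^{\circ}$ is a connected component of $\widetilde S\setminus\Lambda$. Finally I would check that $\pi\colon\widetilde S\to S$ restricts to a homeomorphism on $T$: on $T^{\circ}$, if $g\in\pi_1 S$ identified two points then $gT^{\circ}=T^{\circ}$, hence $gT=T$, and Brouwer's theorem on the disk $T$ would give a fixed point, contradicting freeness of the deck action; the boundary and corner cases reduce to this using that the $\gamma_i$ are simple and there are no triple points. Then $\pi(T)$ is an embedded disk bounded by three arcs, the $i$-th lying on $\gamma_i$, meeting pairwise exactly once --- a triangle component of $S\setminus\Gamma$.

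The main obstacle is this forward direction, and specifically the descent from a triangle in $\widetilde S$ to a genuine triangle component of $S$: the obvious triangle cut out by three crossing lifts can be subdivided by further lifts and can fail to embed under the covering. The shrinking-and-finiteness step disposes of the first difficulty and the Brouwer-versus-freeness step of the second; everything else is bookkeeping with the curves-to-hyperplanes correspondence.
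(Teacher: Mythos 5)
Your proof is correct and follows essentially the same route as the paper's: the curves-to-hyperplanes correspondence produces three pairwise-crossing lifts, an innermost-triangle argument inside the resulting triangle yields a component of $\widetilde{S}\setminus\Lambda$, freeness of the deck action shows it embeds (you via Brouwer's fixed point theorem, the paper via the Jordan curve theorem), and the converse together with the bound $\dim C(\Gamma)\le 3$ from simplicity of the curves is identical. One small caveat: geodesic representatives of the $\gamma_i$ need not avoid triple points, so your parenthetical justification that the three intersection points are distinct does not quite apply to the geodesic picture; either perturb, or run the same argument with the given realization (whose lifts of distinct curves also intersect at most once, by bigon-freeness), as the paper does.
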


\begin{proof}
{If $\dim C(\Gamma)=3$ and $\Gamma$ is in minimal position, then the curves to hyperplanes correspondence guarantees that there are three mutually intersecting lifts of $\Gamma$ in $\widetilde{S}$. As $\Gamma$ is a realization, the intersections of these lifts are not concurrent, and there is a triangle $T$ in their complement. Since $T$ is compact, the intersection $T\cap\widetilde{\Gamma}$ consists of finitely many arcs. As each such arc doesn't form a bigon with the boundary of the triangle, the complement of the arc has a triangular component. Thus there is a triangular component $T'\subset T\setminus\widetilde{\Gamma}$ (cf. with \emph{innermost} bigon, \cite[p.~31]{farb-margalit}).

Let $\pi:\widetilde{S}\to S$ be the covering map. If $int(T')$ does not embed under $\pi$, then there is an element $g\in\pi_{1}S$ so that $int(T')\cap g\cdot int(T')\ne\emptyset$. In this case, by the Jordan Curve Theorem there is an intersection $p\in\partial T'\cap g\cdot\partial T'$. Since $\partial T'$ consists of arcs from lifts of $\Gamma$, this violates $T'\subset T\setminus\widetilde{\Gamma}$. Thus $int(T')$ embeds in $S$, and we have found a triangle in the complement of $\Gamma$.

Conversely, suppose $\Gamma$ forms a triangle $T$. Lift this topological disk to $\widetilde{S}$, and observe that the arcs of the boundary are contained in curves that pairwise intersect. By the curves to hyperplanes correspondence, there is a $3$-cube corresponding to this collection. Using the correspondence again, a $4$-cube would yield four lifts of curves in $\Gamma$ that pairwise intersect. Since $|\Gamma|=3$, two of these lifts are in the same $\pi_{1}S$-orbit. As each of the $\gamma_{i}$ are simple, this is impossible.}
\end{proof}

Recall that a \emph{1-system} is a collection of homotopy classes of simple closed curves whose pairwise geometric intersection number is at most one. It is interesting to note that our proof of Theorem \ref{3-to-n} below makes essential use of the orientation of $S$. 

\begin{customthm}{3}
{Suppose $\Gamma=\{\gamma_{1},\ldots,\gamma_{n}\}$ is a $1$-system on $S$. Then the dimension of $C(\Gamma)$ is $n$ if and only if the dimension of $C(\Gamma')$ is three, for every triple $\Gamma'\subset\Gamma$.}
\end{customthm}

One direction is straightforward: If $\dim C(\Gamma)=n$, then the correspondence of curves to hyperplanes guarantees that there is a set of $n$ lifts of the curves of $\Gamma$ which mutually intersect. In this case, every trio of these lifts mutually intersect and form a $3$-cube in the complex dual to that trio. Towards the other direction, using Lemma \ref{tool1} we may assume that each trio forms a triangle.

We define an \emph{almost-realization} of a curve system to be a minimal position realization (see \S\ref{background}) with one slight change: the words `only at transverse double points' in condition (iii) should be replaced by `transversally'. (The terms \emph{pairwise minimal position realization} would be more descriptive, but less economical). Note that \cite[Prop.~1.7, p.~31]{farb-margalit} still guarantees that almost-realizations minimize pairwise geometric intersection numbers, as with realizations.

\begin{proposition}
\label{tool2claim}
{If $\{\gamma_{1},\ldots,\gamma_{n}\}$ is a $1$-system so that every trio $\{\gamma_{i},\gamma_{j},\gamma_{k}\}$ forms a triangle, then there is an almost-realization $\{\alpha_{1},\ldots,\alpha_{n}\}$ of $\{\gamma_{1},\ldots,\gamma_{n}\}$ so that the curves $\alpha_{1},\ldots,\alpha_{n}$ have a single common intersection point.}
\end{proposition}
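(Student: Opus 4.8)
The plan is to build the common intersection point by starting from an arbitrary almost-realization and then inductively "pushing together" the intersection points using the triangle hypothesis, one triangle at a time, while never increasing any pairwise intersection number. First I would fix an almost-realization $\{\alpha_1,\dots,\alpha_n\}$ (which exists — e.g. take a hyperbolic metric and geodesic representatives) and suppose for contradiction, or rather toward the inductive goal, that not all intersection points coincide. The key observation is that any two curves $\alpha_i,\alpha_j$ in the $1$-system meet in exactly one point $p_{ij}$ (geometric intersection number one, minimal position), so the data to be controlled is the finite set $\{p_{ij}\}$, and the goal is to make all the $\binom{n}{2}$ points equal. I would proceed by induction on $n$: assuming $\alpha_1,\dots,\alpha_{n-1}$ already pass through a single point $q$, I must isotope $\alpha_n$ (and if necessary the others, slightly) so that $\alpha_n$ also passes through $q$, without creating bigons.

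For the inductive step, the triangle hypothesis is the engine. Consider $\alpha_n$ together with two of the earlier curves, say $\alpha_1,\alpha_2$; by hypothesis $\{\gamma_1,\gamma_2,\gamma_n\}$ forms a triangle, and by Lemma \ref{triangles} the current almost-realization does too (after possibly resolving the common point $q$ of $\alpha_1,\alpha_2$ into a double point, or working with the triangle in a small disk neighborhood). The triangle gives an embedded disk $T$ bounded by subarcs of $\alpha_1,\alpha_2,\alpha_n$; collapsing $T$ to a point via an ambient isotopy supported near $T$ is a Reidemeister-III-type move that slides the three intersection points $p_{1n},p_{2n}$ and $q=p_{12}$ together. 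Crucially — and this is where I expect to use the orientability of $S$, as the remark before the theorem flags — one must check that performing this collapse does not introduce a bigon with any of the other curves $\alpha_3,\dots,\alpha_{n-1}$ and does not pull $\alpha_n$ off of the common point $q$ that it must eventually hit. I would argue that because each $\alpha_k$ meets each of $\alpha_1,\alpha_2,\alpha_n$ at most once, the intersections of $\alpha_k$ with the triangle $T$ are severely constrained (an arc of $\alpha_k$ entering $T$ through one side and leaving through another, with no two such arcs cutting off a bigon), so the triangle can be chosen innermost among all curves, and collapsing an innermost triangle is harmless — mirroring the innermost-bigon argument used in the proof of Lemma \ref{tool1}.

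The main obstacle, and the place I would spend the most care, is the coherence of all these collapses: pushing $\alpha_n$ onto $q$ using the $(\alpha_1,\alpha_2,\alpha_n)$-triangle must be compatible with the requirement that $\alpha_n$ also meet $\alpha_3,\dots,\alpha_{n-1}$ at the same point $q$. The clean way to handle this is not to do the curves one at a time but to set up the induction so that after adding $\alpha_n$ through $q$, one re-examines each triple $\{\alpha_i,\alpha_j,\alpha_n\}$ and uses its triangle to force $p_{in}=p_{jn}=q$; orientability is what guarantees that these local moves can be chosen consistently (each triangle sits on a definite side, so the "direction" one collapses is unambiguous and the collapses for different triples do not fight each other). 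I would also need the standard fact that throughout this process the configuration remains an almost-realization — no bigons appear — which again follows from innermost-ness plus the $1$-system hypothesis bounding how any other curve can cross the relevant disks. Once all $\binom{n}{2}$ points are identified, the resulting configuration is the desired almost-realization with a single common intersection point.
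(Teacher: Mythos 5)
Your high-level outline is in the right spirit: the paper also argues by induction on $n$, also uses the triangle hypothesis to move intersections together, and also relies essentially on the orientation of $S$ to make the moves coherent. But there is a genuine gap in the inductive step, precisely at the place you flag as "the main obstacle," and I do not think the hand you wave at it actually closes it.

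Two specific problems. First, your plan collapses a triangle $T$ having $q=p_{12}$ as a vertex via an ambient isotopy supported near $T$. The point $q$ lies on \emph{all} of $\alpha_1,\dots,\alpha_{n-1}$, so this isotopy drags every one of those curves and, in particular, changes the configuration of every other triangle $\{\alpha_i,\alpha_j,\alpha_n\}$ at the same moment. You assert that "each triangle sits on a definite side, so the direction one collapses is unambiguous and the collapses do not fight each other," but this is exactly what needs proof, and it is false as stated: there are in general several admissible placements of a triangle's corner at $q$, some of which are incompatible with the existence of a common intersection point for the whole collection. The paper avoids this by \emph{fixing} $\alpha_1,\dots,\alpha_n$ entirely and only moving the new curve $\beta$; the orientation then enters as a hard constraint on which corner placements of the triangles $T_k$ of $\{\alpha_k,\alpha_{k+1},\beta\}$ at $p_0$ are actually possible (one of the four a priori placements is ruled out by an orientation argument, cf.~Figure \ref{triangle 4}).

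Second, what the paper really does is introduce an intermediate target that your sketch does not articulate: a "localized realization," namely a representative of $\beta$ so that $\beta\cap C$ is connected and all intersection points $p_i=\beta\cap\alpha_i$ lie inside a small disk $C$ around $p_0$. The bulk of the paper's inductive step is a finite descent on the number of components of $\beta\cap C$, driven by the triangles $T_1,\dots,T_{n-1}$ in cyclic order around $p_0$: at each stage either a localized realization is produced outright, or a homotopy of $\beta$ supported in $C$ strictly reduces the component count. The final step from a localized realization to a common intersection point is then a short Jordan-curve argument. Without something playing the role of "localized realization" and a monotone quantity to drive the induction, "collapse triangles one at a time and hope they don't fight" is not a proof; it is a restatement of the problem. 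If you want to complete the argument along your lines you would need to (i) keep $\alpha_1,\dots,\alpha_{n-1}$ fixed and only move the new curve, and (ii) make precise, via the cyclic order at $p_0$ and the orientation of the surface, why the triangle corner placements at $p_0$ are forced, which is where the actual content lives.
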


\begin{figure}
	\centering
	\includegraphics[height=6cm]{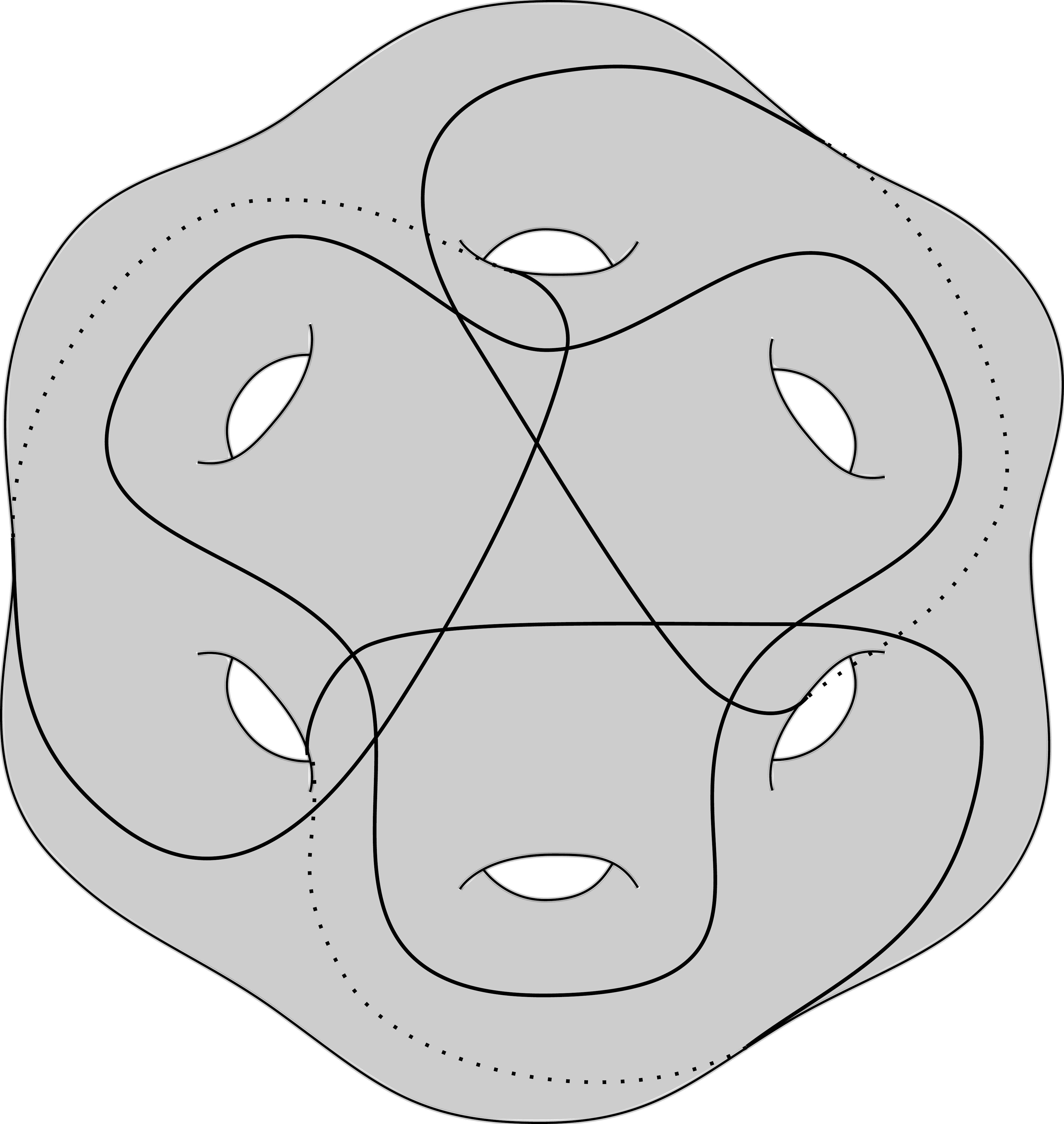}
	\caption{Four curves for which every trio forms a triangle, but so that there is no almost-realization with a single fourfold intersection point.}
	\label{necessity1system}	
\end{figure}

Note that it is essential that the curves form a $1$-system, as Figure \ref{necessity1system} exhibits four curves so that every trio forms a triangle, but the conclusion of Proposition \ref{tool2claim} fails. Note as well that, if the curves form a $1$-system and every trio forms triangles, then each pair intersects exactly once, which we assume below. We first prove Theorem \ref{3-to-n}, assuming Proposition \ref{tool2claim}.

\begin{proof}[Proof of Theorem \ref{3-to-n}]
{By Proposition \ref{tool2claim}, the curves $\gamma_{1},\ldots,\gamma_{n}$ have an almost-realization as $\alpha_{1},\ldots,\alpha_{n}$, so that the $\alpha_{i}$ have a single common intersection point. Choose a lift of this point to $\widetilde{S}$, and consider the lifts of the $\alpha_{i}$ passing through this point. Since the $\alpha_{i}$ form an almost-realization, there are no bigons formed by the curves, and the chosen lifts pairwise intersect in exactly one point. This implies that in any realization of the curves $\gamma_{1},\ldots,\gamma_{n}$ these lifts will pairwise intersect. By the curves to hyperplanes correspondence, there is an $n$-cube dual to these curves.}
\end{proof}

In the proof of Proposition \ref{tool2claim}, we will need a straightforward adaptation of \cite[Prop.~1.7, p.~31]{farb-margalit}:

\begin{lemma}
\label{homotopies to only one}
{If $\{\alpha_{1},\ldots,\alpha_{n}\}$ is an almost-realization of a collection of simple closed curves, then for any other simple closed curve $\beta$, there is a curve $\beta'$, homotopic to $\beta$, so that $\{\alpha_{1},\ldots,\alpha_{n},\beta'\}$ is an almost-realization.}
\end{lemma}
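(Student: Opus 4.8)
The plan is to adapt the proof of the bigon criterion \cite[Prop.~1.7, p.~31]{farb-margalit}, performing all homotopies on $\beta$ while keeping the $\alpha_i$ fixed. The feature to exploit is that the $\alpha_i$ are already pairwise in minimal position (``no bigons'' is condition (ii) of an almost-realization), so the usual bigon-removing surgeries applied to $\beta$ interact with the $\alpha_i$ in a controlled way. I would argue by induction on $n$, the base case $n=1$ being precisely the bigon criterion (starting from any transverse position, remove bigons until $\beta'$ is in minimal position with $\alpha_1$, then perturb to make it transverse).

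For the inductive step, suppose $\beta'$ is a simple closed curve homotopic to $\beta$, transverse to each $\alpha_i$, forming no bigon with any of $\alpha_1,\ldots,\alpha_{n-1}$. If $\beta'$ also forms no bigon with $\alpha_n$ we are done; otherwise we remove bigons between $\beta'$ and $\alpha_n$ one at a time. Fix an innermost such bigon $D$, with $\partial D = a\cup b$, $a\subset\beta'$, $b\subset\alpha_n$; as in the proof of Lemma \ref{tool1} (cf.\ \emph{innermost bigon}, \cite[p.~31]{farb-margalit}), $\mathrm{int}(D)$ is disjoint from $\beta'\cup\alpha_n$, though $\mathrm{int}(D)$ may be met by the other curves.

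The key observation is that, for $j<n$, every arc of $\alpha_j$ contained in $D$ runs from $\mathrm{int}(a)$ to $\mathrm{int}(b)$: an arc of $\alpha_j$ with both endpoints on $\mathrm{int}(b)$ would, after passing to an innermost such arc, cut off a genuine bigon between $\alpha_j$ and $\alpha_n$ inside $D$, contradicting that $\{\alpha_i\}$ is an almost-realization; an arc with both endpoints on $\mathrm{int}(a)$ would likewise cut off a bigon between $\alpha_j$ and $\beta'$, contradicting the inductive hypothesis. Pushing $a$ across $D$ and just past $b$ therefore deletes the two intersection points of $\beta'$ with $\alpha_n$ at the corners of $D$, and transports each intersection of $\beta'$ with $\alpha_j$ lying on $a$ to a nearby intersection with $\alpha_j$ lying on the push-off of $b$; hence $|\beta'\cap\alpha_j|$ is unchanged for every $j<n$. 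The curve remains simple and, after an arbitrarily small perturbation, transverse to all the $\alpha_i$; and it forms no new bigons, since $|\beta'\cap\alpha_j|$ is still the minimal value for $j<n$, so by the bigon criterion there is no bigon with $\alpha_j$. As $|\beta'\cap\alpha_n|$ strictly decreases, the process terminates, leaving $\beta'$ in minimal position with every $\alpha_i$; then $\{\alpha_1,\ldots,\alpha_n,\beta'\}$ forms no bigons (none among the $\alpha_i$, none between $\beta'$ and an $\alpha_i$, and none of $\beta'$ with itself since $\beta'$ is simple) and is transverse, hence is an almost-realization.

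The main obstacle is the bookkeeping in the last paragraph: verifying precisely that the bigon-pushing surgery neither creates new bigons with the other curves nor raises their intersection numbers with $\beta'$. This is exactly where the hypothesis that $\{\alpha_i\}$ is an almost-realization is used, via the dichotomy that a ``bad'' arc inside $D$ yields either a bigon among the $\alpha_i$ or a bigon between $\beta'$ and an already-handled $\alpha_j$. With that dichotomy established, the remaining verification is a routine local analysis of the surgery near $D$.
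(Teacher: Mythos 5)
Your proposal is correct and takes essentially the same route as the paper: both arguments remove bigons between $\beta$ and one $\alpha_k$ at a time, using the hypothesis that the $\alpha_i$ form no bigons among themselves to force arcs of the other curves crossing the bigon to run from the $\beta$-side to the $\alpha_k$-side, so the surgery never increases intersection with the other curves and minimal position already achieved is preserved. The only difference is organizational (your induction on $n$ with equality of counts versus the paper's sequential sweep with non-increasing counts for all $j$), which is not substantive.
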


\begin{proof}
{By \cite[Prop.~1.7, p.~31]{farb-margalit}, two curves are in minimal position if and only if they form no bigons. One half of this proof shows that if a bigon is formed \cite[p.~32]{farb-margalit}, one may homotope one of these curves across the bigon while the other remains fixed. Choose any realization $\beta_{0}$ of $\beta$. If $\beta_{0}$ is in minimal position with $\alpha_{1}$, then let $\beta_{1}=\beta_{0}$. If $\beta_{0}$ is not in minimal position with $\alpha_{1}$, find a bigon that $\beta_{0}$ forms with $\alpha_{1}$. Use this bigon to find a curve $\beta_{0}'$, homotopic to $\beta_{0}$, so that $|\beta_{0}'\cap\alpha_{1}|<|\beta_{0}\cap\alpha_{1}|$.

It is crucial to observe that this step may be done so that $|\beta_{0}'\cap\alpha_{j}|\le|\beta_{0}\cap\alpha_{j}|$, for each $j$: The new curve $\beta_{0}'=\beta_{+}\cup\beta_{-}$ is composed of two arcs, where $\beta_{+}$ follows $\beta_{0}$ and $\beta_{-}$ follows $\alpha_{1}$. If an arc of $\alpha_{j}$ intersects $\beta_{0}'$ in the arc $\beta_{+}$, then there is a corresponding point of intersection of $\alpha_{j}$ with $\beta_{0}$. If an arc of $\alpha_{j}$ intersects $\beta_{0}'$ in the arc $\beta_{-}$, then this arc of $\alpha_{j}$ enters the bigon formed by $\beta_{0}$ and $\alpha_{1}$ along the side contained in $\alpha_{1}$. In this case, $\alpha_{j}$ must exit the bigon through the side contained in $\beta_{0}$, since $\alpha_{1}$ and $\alpha_{j}$ form no bigons by hypothesis (see \ref{bigon1} and \ref{bigon2}). Thus there is again a point of intersection of $\beta_{0}$ with this arc of $\alpha_{j}$ that corresponds to the intersection of $\alpha_{j}$ with $\beta_{0}'$. We conclude that $|\beta_{0}'\cap\alpha_{j}|\le|\beta_{0}\cap\alpha_{j}|$, for $j\ne1$.

\begin{figure}[h]
	\begin{minipage}[]{.45\linewidth}
		\centering
		\vspace{.2cm}
		\begin{lpic}[clean]{bigon1(,1.7cm)}
			\Large
			\lbl[]{98,6;$\beta_{0}$}
			\lbl[]{-8,14;$\beta_{0}'$}
			\lbl[]{14,-8;$\alpha_{1}$}
			\lbl[]{73,38;$\alpha_{j}$}
		\end{lpic}
		\vspace{.4cm}
		\caption{A point in $\alpha_{j}\cap\beta_{0}'$ corresponds to a point in $\alpha_{j}\cap\beta_{0}$.}
		\label{bigon1}
	\end{minipage}
	\hspace{.5cm}
	\begin{minipage}[]{.45\linewidth}
		\centering
		\vspace{.2cm}
		\begin{lpic}[clean]{bigon2(,1.5cm)}
			\Large
			\lbl[]{98,6;$\beta_{0}$}
			\lbl[]{-8,14;$\beta_{0}'$}
			\lbl[]{14,-8;$\alpha_{1}$}
			\lbl[]{79,37;$\alpha_{j}$}
		\end{lpic}
		\vspace{.4cm}
		\caption{`New' intersections of $\beta_{0}'$ with $\alpha_{j}$, violating the assumption that $\alpha_{1}$ and $\alpha_{j}$ are in minimal position.}
		\label{bigon2}
	\end{minipage}
\end{figure}

We apply these finitely many homotopies across bigons to $\beta_{0}$, one for each of the bigons formed by $\beta_{0}$ and $\alpha_{1}$. The result is a curve $\beta_{1}$, homotopic to $\beta$, so that $\beta_{1}$ and $\alpha_{1}$ are in minimal position, and so that $|\beta_{1}\cap\alpha_{j}|\le|\beta_{0}\cap\alpha_{j}|$, for $j\ne1$.  Do this one-by-one for each $\alpha_{k}$, and the result is a curve $\beta'$, homotopic to $\beta$, so that $\beta$ is in minimal position with $\alpha_{j}$ for each $j=1,\ldots,n$. Thus $\{\alpha_{1},\ldots,\alpha_{n},\beta'\}$ is an almost-realization.}\end{proof}

\begin{proof}[Proof of Proposition \ref{tool2claim}]{We proceed by induction on $n$. For $n=3$, choose a realization of the three curves that forms a triangle. Note that since the curves form a $1$-system, there is one boundary arc of the triangle contained in each of the curves. Fixing the curves outside of a disk containing the triangle, homotope one of the curves across the triangle so that it transversally crosses the other two curves at their intersection point. This produces an almost-realization of the curves, since no bigons have been created. 

Assume now that we have $n+1$ curves $\{\gamma_{1},\ldots,\gamma_{n+1}\}$ so that every trio forms a triangle. By the inductive hypothesis, there is an almost-realization $\{\alpha_{1},\ldots,\alpha_{n}\}$ of $\{\gamma_{1},\ldots,\gamma_{n}\}$ so that the $\alpha_{i}$ all have a unique common intersection point. Our strategy of proof will be to first choose a curve in the homotopy class of $\gamma_{n+1}$ that is in minimal position with the $\alpha_{i}$, and then to use the intersection properties of the curves -- namely the fact that each pair intersects exactly once, and that every trio forms triangles -- to `weave' this curve around the other curves, achieving the desired arrangement. Note that in this process the curves $\alpha_{i}$, for $i=1,\ldots,n$, remain unchanged.

Lemma \ref{homotopies to only one} implies that we may find a curve $\beta$, homotopic to $\gamma_{n+1}$, so that $\{\alpha_{1},\ldots,\alpha_{n},\beta\}$ is an almost-realization. After possibly renaming the curves $\alpha_{1},\ldots,\alpha_{n}$, we may assume that these curves are arranged around their unique common intersection in the counter-clockwise order $\alpha_{1},\ldots,\alpha_{n}$. Let $p_{0}$ indicate the common intersection point of the $\alpha_{i}$, and choose a disk neighborhood $C$ of $p_{0}$, embedded on $S$, so that $\alpha_{i}\cap C$ is connected for each $i=1,\ldots,n$.

Let $p_{i}:=\beta\cap\alpha_{i}$. Replacing $\beta$ with a homotopic curve if necessary, we may assume that all of the $p_{i}$ are contained in the arcs $\alpha_{i}\cap C$, and that $\beta\cap C$ consists only of arcs that intersect an arc $\alpha_{i}\cap C$. We now have a picture where $\beta$ weaves in and out of $C$, intersecting each of the arcs $\alpha_{i}\cap C$ precisely once. Outside of $C$ the curves $\{\alpha_{1},\ldots,\alpha_{n},\beta\}$ are disjoint. The argument that follows applies various homotopies to $\beta$, maintaining the fact that $\beta$ and $\alpha_{i}$ are in minimal position, for each $i=1,\ldots,n$. Note that as we apply such homotopies, the $p_{i}$ move as well.

In fact, for each $p_{i}$ there is a homotopy of the curve $\beta$ that slides the intersection $p_{i}$ along $\alpha_{i}$, out of $C$, until $p_{i}$ ``reappears'' on the other side of $p_{0}$ on $\alpha_{i}\cap C$. Since the curves $\alpha_{i}$ do not intersect outside of $C$, this homotopy leaves the collection $\{\alpha_{1},\ldots,\alpha_{n},\beta\}$, pairwise, in minimal position. This homotopy of $\beta$ will be exploited in the following argument, where we will refer to it as the `slide move applied to $p_{i}$'.

\begin{figure}
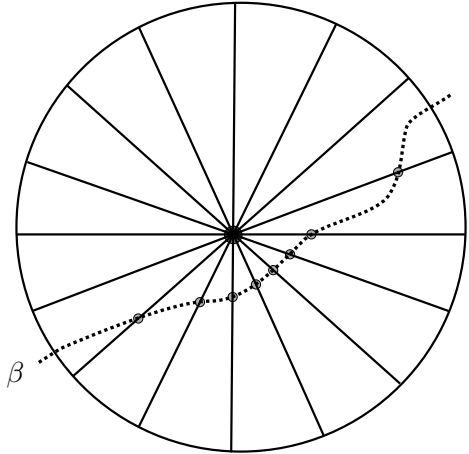

	\centering
	\begin{lpic}{greatPictureTriangles(,6cm)}
		\lbl[]{0,20;$\beta$}
	\end{lpic}
	\caption{A `localized realization' of $\beta$: $\beta\cap C$ is connected, and $p_{i}\in C$.}
	\label{localized realization}
\end{figure}

Our goal is to apply a homotopy to $\beta$ to achieve a special formation, in which intersections of $\beta$ with $\alpha_{i}$ have been `localized' to $C$: We will say a curve is a \emph{localized realization} of $\beta$ if it is homotopic to $\beta$, $\beta\cap C$ is connected, and $p_{i}\in C$, for each $i$. See Figure \ref{localized realization} for an illustrated example. The bulk of the proof of the inductive step concerns existence of a localized realization of $\beta$. We will proceed by analyzing the triangles formed by $\beta$ with $\alpha_{i}$ and $\alpha_{i+1}$, as $i$ goes from $1$ to $n-1$. At each step, we either find a localized realization of $\beta$, or we apply a homotopy to $\beta$ so that the number of connected components of $\beta\cap C$ is at most $n-i$.

Given vertices $a_{1},a_{2},a_{3}$ of the triangle $T$, we will say that $T$ `realizes $(a_{1},a_{2},a_{3})$' if the counter-clockwise orientation of $\partial T$ induces the cyclic order $(a_{1},a_{2},a_{3})$. We will refer to a sufficiently small neighborhood of $a_{i}$ as a `corner' of $T$, or `the corner at $a_{i}$' when the points are distinct, where the sufficiency is fulfilled when the intersection of $T$ with this neighborhood is connected and disjoint from the side opposite the vertex.  

Suppose that two curves $\eta$ and $\delta$ in minimal position, together with a third curve, form a triangle $T$, so that $T$ has a corner at the intersection point $p\in\eta\cap\delta$. In this case, the complement of $\eta\cup\delta$ in a small enough neighborhood of $p$ consists of four components. Note that these four components correspond to the four possibilities for the placement of a corner of $T$ at $p$. (When $\eta$ and $\delta$ are simple curves it is straightforward to check that these possibilities are mutually exclusive). This is exploited repeatedly below, for the placement of corners of triangles at $p_{0}$.

By hypothesis, and by Lemma \ref{triangles}, there is a triangular component $T_{1}$ of $S\setminus\{\alpha_{1},\alpha_{2},\beta\}$. Because the curves form a $1$-system, the vertices of $T_{1}$ are necessarily given by $p_{0}$, $p_{1}$, and $p_{2}$. Of the four ways for there to be a corner formed by $\alpha_{1}$ and $\alpha_{2}$ at $p_{0}$, two of them -- namely, the choice of corner so that $T$ realizes $(p_{0},p_{2},p_{1})$ (see Figure \ref{triangle 1}) -- would achieve a localized realization: The composition, if necessary, of slide moves applied to $p_{1}$ and $p_{2}$ with a homotopy of $\beta$ across the triangle $T_{2}$ would form a localized realization. We thus assume that $T$ realizes $(p_{0},p_{1},p_{2})$, in which case, applying slide moves to $p_{1}$ and $p_{2}$ if necessary, we have the situation pictured in Figure \ref{triangle 2}. Applying further slide moves when necessary, we may now assume that the $p_{i}$, for $i>2$, all lie on the same side of $\alpha_{1}\cap C$ inside $C$, as in Figure \ref{triangle 3}.

\begin{figure}
	\begin{minipage}[]{.45\linewidth}
	\centering
	\vspace{.5cm}
	\begin{lpic}[clean]{inductiveTriangles2(,6cm)}
		\Large
		\lbl[]{-2,88;$C$}
		\lbl[]{70,66;$p_{0}$}
		\lbl[]{48,28;$p_{1}$}
		\lbl[]{35,70;$p_{2}$}
		\lbl[]{100,5;$\alpha_{2}$}
		\lbl[]{100,110;$\alpha_{1}$}
		\lbl[]{35,120;$\beta$}
		\lbl[]{-8,45;$\alpha_{n}$}		
	\end{lpic}
		\caption{The triangle $T_{1}$ realizes $(p_{0},p_{2},p_{1})$, forming a localized realization of $\beta$.}
		\label{triangle 1}
	\end{minipage}
	\hspace{.5cm}
	\begin{minipage}[]{.45\linewidth}
	\centering
	\begin{lpic}[clean]{inductiveTriangles1(,5.8cm)}
		\Large
		\lbl[]{116,88;$C$}
		\lbl[]{55,70;$p_{0}$}
		\lbl[]{45,28;$p_{1}$}
		\lbl[]{65,28;$p_{2}$}
		\lbl[]{18,112;$\alpha_{2}$}
		\lbl[]{100,110;$\alpha_{1}$}
		\lbl[]{5,12;$\beta$}
		\lbl[]{125,68;$\alpha_{n}$}
	\end{lpic}
	\caption{The triangle $T_{1}$ realizes $(p_{0},p_{1},p_{2})$, where no localized realization is yet assured.}
	\label{triangle 2}
	\end{minipage}
\end{figure}

We now consider a triangular component $T_{2}$ of $S\setminus\{\alpha_{2},\alpha_{3},\beta\}$, with vertices $p_{0}$, $p_{2}$, and $p_{3}$. As before, there are four possible placements of the corner of $T_{2}$ at $p_{0}$. Two of them correspond to a situation in which $T_{2}$ realizes $(p_{0},p_{3},p_{2})$, again allowing a homotopy of $\beta$ that forms a localized realization. Assuming then that $T_{2}$ realizes $(p_{0},p_{2},p_{3})$, we now describe why there is only one possible placement of the corner of $T_{2}$ at $p_{0}$.

\begin{figure}
	\begin{minipage}[]{.45\linewidth}
	\vspace{.5cm}
	\centering
	\begin{lpic}[clean]{inductiveTriangles3(6cm)}
		\Large	
		\lbl[]{-2,88;$C$}
		\lbl[]{51,72;$p_{0}$}
		\lbl[]{32,39;$p_{1}$}
		\lbl[]{48,23;$p_{2}$}
		\lbl[]{77,41;$p_{3}$}
		\lbl[]{100,110;$\alpha_{1}$}
		\lbl[]{55,122;$\alpha_{2}$}
		\lbl[]{18,110;$\alpha_{3}$}
		\lbl[]{5,12;$\beta$}
		\lbl[]{-8,45;$\alpha_{n}$}
	\end{lpic}
	\caption{Given that the triangle $T_{1}$ realizes $(p_{0},p_{1},p_{2})$, the arcs $\beta\cap C$ may appear as pictured.}
	\label{triangle 3}
	\end{minipage}
	\hspace{.5cm}
	\begin{minipage}[]{.45\linewidth}
	\vspace{.8cm}
	\centering
	\begin{lpic}[clean]{inductiveTriangles4(6cm)}
		\Large
		\lbl[]{120,90;$C$}
		\lbl[]{49,80;$p_{0}$}
		\lbl[]{33,42;$p_{1}$}
		\lbl[]{62,35;$p_{2}$}
		\lbl[]{77,47;$p_{3}$}
		\lbl[]{100,115;$\alpha_{1}$}
		\lbl[]{56,129;$\alpha_{2}$}
		\lbl[]{15,118;$\alpha_{3}$}
		\lbl[]{4,13;$\beta$}
		\lbl[]{125,71;$\alpha_{n}$}
	\end{lpic}
	\caption{Given triangle $T_{1}$ realized as $(p_{0},p_{1},p_{2})$, the darkly shaded triangle $T_{2}$ cannot have a corner at $p_{0}$ as pictured.}
	\label{triangle 4}
	\end{minipage}
\end{figure}

First suppose the corner of $T_{2}$ at $p_{0}$ is placed as pictured in Figure \ref{triangle 4}. That is, suppose $T_{2}$ does not share the side between $p_{0}$ and $p_{2}$, contained in the arc $\alpha_{2}\cap C$, with $T_{1}$. Because $T_{2}$ realizes $(p_{0},p_{2},p_{3})$, the counter-clockwise orientation of $\partial T_{2}$ turns left from $\alpha_{2}$ to $\beta$ at $p_{2}$. There are two ways to make a left turn from $\alpha_{2}$ to $\beta$ at $p_{2}$. Of them, the one so that $T_{2}$ does not share the side between $p_{0}$ and $p_{2}$ with $T_{1}$ contains the arc of the curve $\beta$ that lies between the vertices $p_{2}$ and $p_{3}$ and containing $p_{1}$. Consequently, one side of the arc $\alpha_{1}\cap C$ leaves $C$ inside $T_{2}$, while the other leaves $C$ outside of $T_{2}$. Since $\alpha_{1}$ may only intersect the sides of $T_{2}$ inside $C$, this is impossible.

This leaves only one possibility for the placement of the corner of $T_{2}$ at $p_{0}$ in which $T_{2}$ realizes $(p_{0},p_{2},p_{3})$ (see Figure \ref{triangle 5}). This case allows us to apply a homotopy to $\beta$ supported in the triangle $T_{2}$, leaving the intersections $p_{i}$ inside $C$, and ensuring that the number of connected components of $\beta\cap C$ is at most $n-2$ (see Figure \ref{triangle 6}). 

Similarly, at the $k$th step, the triangular component $T_{k}$ of $S\setminus\{\alpha_{k},\alpha_{k+1},\beta\}$ either provides a homotopy of $\beta$ that achieves a localized realization, or provides a homotopy of $\beta$, supported inside $C$, that ensures that the number of connected components of $\beta\cap C$ is at most $n-k$. When $k=n-1$, we have ensured the existence of a localized realization of $\beta$.

\begin{figure}
	\begin{minipage}[]{.45\linewidth}
	\vspace{.5cm}
	\centering
	\begin{lpic}[clean]{inductiveTriangles5(6cm)}
		\Large
		\lbl[]{-2,88;$C$}
		\lbl[]{49,80;$p_{0}$}
		\lbl[]{33,42;$p_{1}$}
		\lbl[]{46,18;$p_{2}$}
		\lbl[]{77,42;$p_{3}$}
		\lbl[]{100,115;$\alpha_{1}$}
		\lbl[]{56,129;$\alpha_{2}$}
		\lbl[]{15,118;$\alpha_{3}$}
		\lbl[]{4,13;$\beta$}
		\lbl[]{-8,45;$\alpha_{n}$}
	\end{lpic}
	\caption{The remaining case where triangle $T_{2}$ realizes $(p_{0},p_{2},p_{3})$.}
	\label{triangle 5}
	\end{minipage}
	\hspace{.7cm}
	\begin{minipage}[]{.45\linewidth}
	\vspace{.5cm}
	\centering
	\begin{lpic}[clean]{inductiveTriangles6(6cm)}
		\Large
		\lbl[]{120,90;$C$}
		\lbl[]{49,80;$p_{0}$}
		\lbl[]{33,42;$p_{1}$}
		\lbl[]{46,18;$p_{2}$}
		\lbl[]{77,42;$p_{3}$}
		\lbl[]{100,115;$\alpha_{1}$}
		\lbl[]{56,125;$\alpha_{2}$}
		\lbl[]{15,118;$\alpha_{3}$}
		\lbl[]{4,13;$\beta$}
		\lbl[]{125,68;$\alpha_{n}$}
	\end{lpic}
	\caption{Applying a homotopy to $\beta$ using $T_{2}$, getting `closer' to a localized realization.}
	\label{triangle 6}
	\end{minipage}
\end{figure}

Finally, after replacing $\beta$ with its localized realization, we assume that $\beta\cap C$ is connected, and that $p_{i}\in C$ for each $i$. Note that this implies that $\beta\cap\partial C$ consists of exactly two points, which we denote $p_{-}$ and $p_{+}$. A straightforward application of the Jordan Curve Theorem ensures that $p_{-}$ and $p_{+}$ are in diametrically opposed components of $\partial C \setminus \{\alpha_1,\ldots,\alpha_n\}$, and we may apply a homotopy to $\beta$ making it into a diameter passing through $p_{0}$, completing the inductive step.}
\end{proof} 

A word of caution: It is not generally true that if $\Gamma'\subset\Gamma$ then the complex $C(\Gamma')$ is a subcomplex of $C(\Gamma)$ (see Figure \ref{3 simple curves}). The following corollary is a weaker version of such a statement that will suffice for our application. Given a collection of curves $\Gamma$, we will say that a subset of $n$ curves $\Gamma'\subset\Gamma$ \emph{form an n-cube} in $C(\Gamma)$ if there are $n$ hyperplanes corresponding to the curves of $\Gamma'$ intersecting in an $n$-cube of $C(\Gamma)$. 

\begin{corollary}
\label{3-to-n subset}
{If $\Gamma$ is a $1$-system of curves, and $\{\gamma_{1},\ldots,\gamma_{n}\}=\Gamma'\subset\Gamma$, then the curves of $\Gamma'$ form an $n$-cube in $C(\Gamma)$ if and only if every triple of curves from $\Gamma'$ form a $3$-cube.}
\end{corollary}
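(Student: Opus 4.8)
The plan is to remove the difficulty flagged in the ``word of caution'' preceding the corollary $-$ that $C(\Gamma')$ need not be a subcomplex of $C(\Gamma)$ $-$ by translating the phrase ``forms an $m$-cube in $C(\Gamma)$'' into a statement purely about lifts in $\widetilde{S}$, which are common to Sageev's construction for \emph{every} curve system containing the relevant classes. Concretely, by Theorem \ref{sageev's} and the curves to hyperplanes correspondence, for any curve system $\Delta$ the hyperplanes of $\widetilde{C(\Delta)}$ are identified (incidence-preservingly) with the lifts of the curves of $\Delta$ to $\widetilde{S}$. Since an $m$-cube of $\widetilde{C(\Delta)}$ is spanned by $m$ pairwise-intersecting local hyperplanes, and conversely $m$ pairwise-intersecting lifts span an $m$-cube by the construction of \S\ref{cube cplx construction section}, one gets the following reformulation: a finite set of $m$ distinct classes $\gamma_{i_1},\dots,\gamma_{i_m}\in\Gamma$ forms an $m$-cube in $C(\Gamma)$ if and only if there exist lifts $\widetilde{\gamma}_{i_1},\dots,\widetilde{\gamma}_{i_m}\subset\widetilde{S}$, one in each class, that pairwise intersect. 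The key point is that the right-hand condition depends only on the linking patterns of the endpoint pairs of these lifts on $\partial\widetilde{S}=S^1$, hence is intrinsic to the list of classes and makes no reference to the ambient system; in particular it is the same condition whether read inside $C(\Gamma)$, inside $C(\Gamma')$, or inside $C$ of a triple.

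With this reformulation in hand the two directions are short. For the forward implication, if the curves of $\Gamma'=\{\gamma_1,\dots,\gamma_n\}$ form an $n$-cube in $C(\Gamma)$, choose pairwise-intersecting lifts $\widetilde\gamma_1,\dots,\widetilde\gamma_n$; any three of them still pairwise intersect, so the corresponding triple forms a $3$-cube in $C(\Gamma)$. For the converse, assume every triple from $\Gamma'$ forms a $3$-cube in $C(\Gamma)$. By the reformulation, each triple $\{\gamma_i,\gamma_j,\gamma_k\}\subset\Gamma'$ admits three pairwise-intersecting lifts, and reading the reformulation for the three-curve system itself this says exactly $\dim C(\{\gamma_i,\gamma_j,\gamma_k\})=3$ (it cannot exceed $3$, since a $4$-cube would force two lifts of simple closed curves into a common $\pi_1 S$-orbit, as in the proof of Lemma \ref{tool1}). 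Now $\Gamma'$ is a $1$-system of size $n$ every triple of which has a three-dimensional dual complex, so Theorem \ref{3-to-n}, applied to $\Gamma'$ directly, yields $\dim C(\Gamma')=n$. Feeding this back through the curves to hyperplanes correspondence for the system $\Gamma'$ produces $n$ pairwise-intersecting lifts of the $n$ distinct classes $\gamma_1,\dots,\gamma_n$; reading the reformulation once more, now for the system $\Gamma$, this is precisely the assertion that $\Gamma'$ forms an $n$-cube in $C(\Gamma)$.

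The main obstacle is exactly the failure of $C(\Gamma')$ to embed in $C(\Gamma)$, so that one cannot argue by restriction of cubes in either direction; everything hinges on passing to genuine lifts in $\widetilde{S}$, where Sageev's construction for $\Gamma$, for $\Gamma'$, and for each triple all see the same curves, and on the fact that Theorem \ref{sageev's} supplies the hyperplanes-to-lifts dictionary for each of these systems simultaneously. Once the reformulation is established the corollary is essentially a transcription of Theorem \ref{3-to-n} (with Lemma \ref{tool1} used only to rule out $4$-cubes on three curves), and no additional geometric input is needed.
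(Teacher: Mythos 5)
Your proposal is correct and follows essentially the same route as the paper: both use the curves-to-hyperplanes correspondence to translate ``forms an $n$-cube in $C(\Gamma)$'' into the existence of pairwise-intersecting lifts (a condition intrinsic to the classes), observe this is equivalent to $\dim C(\Gamma')=n$, and then invoke Theorem \ref{3-to-n}. Your write-up merely spells out the realization-independence and the $4$-cube exclusion in more detail than the paper's one-line argument.
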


\begin{figure}[h]
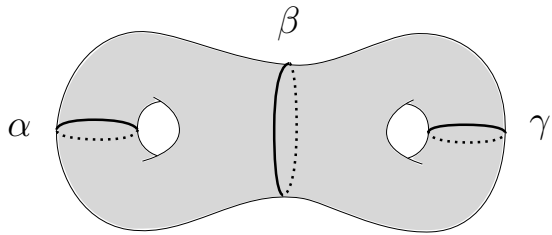

\vspace{.5cm}
	\begin{lpic}[clean]{3SimpleCurves(6cm)}
		\Large
		\lbl{-12,35;$\alpha$}
		\lbl{78,70;$\beta$}
		\lbl{162,35;$\gamma$}
	\end{lpic}
	\caption{The complex $C(\{\alpha,\gamma\})$ is not a subcomplex of $C(\{\alpha,\beta,\gamma\})$.}
	\label{3 simple curves}
\end{figure}

\begin{proof}
{Using the curves to hyperplanes correspondence, the curves of $\Gamma'$ form an $n$-cube in $C(\Gamma)$ if and only if there is a choice of lifts $\{\widetilde{\gamma_{1}},\ldots,\widetilde{\gamma_{n}}\}$ so that these lifts pairwise intersect, which in turn occurs if and only if $\dim C(\Gamma')=n$, at which point we apply Theorem \ref{3-to-n}.}
\end{proof} 

\section{A family of maximum complete 1-systems}
\label{construction section}

\begin{figure}[h]
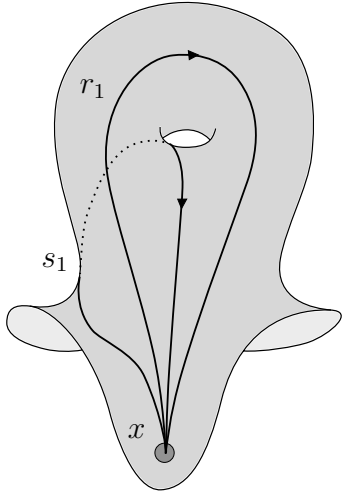

	\begin{lpic}[clean]{generatorsHandle(,6.5cm)}
		\large
		\lbl{120,55;$x$}
		\lbl{80,370;$r_{1}$}
		\lbl{45,210;$s_{1}$}
	\end{lpic}
	\caption{The generators of $\pi_{1}(S,x)$ in one handle.}
	\label{generators handle1}
\end{figure}

We now construct many maximum complete 1-systems on a surface $S$ of any odd genus $g=2n+1$. Consider the genus $g$ surface as the unit sphere in $\R^{3}$ with $g$ handles attached at evenly spaced disks centered on an equator, making the order $g$ homeomorphism $\sigma$ that cyclically permutes the handles apparent. Let $x$ be a point fixed by this homeomorphism. Consider the presentation $$\pi_{1}(S,x)=\left\langle r_{i},s_{i} \left| \; \prod_{i=1}^{g}[r_{i},s_{i}] \right. \right\rangle,$$ where the generators $r_{1}$ and $s_{1}$ are as pictured in \ref{generators handle1}, and $r_{i}=\sigma(r_{i-1})$ and $s_{i}=\sigma(s_{i-1})$ for $i=2,\ldots,g$.

Let $\alpha_{1}$, $\beta_{1}$, and $\delta$ be given by 
\begin{enumerate}
\item $\alpha_{1}=\left[r_{n+2}r_{n+3}\ldots r_{2n+1}s_{1}\right]$, 
\item $\beta_{1}=\left[r_{n+2}r_{n+3}\ldots r_{2n+1}s_{1}^{-1}\displaystyle\prod_{i=1}^{n+1}[s_{i},r_{i}]\right]$, and
\item $\delta=\left[r_{1}r_{2}\ldots r_{2n+1}\right]$.
\end{enumerate}

The orbit of $\alpha_{1}$ under $\sigma$ gives $g$ curves which we denote by $\{\alpha_{1},\alpha_{3},\ldots,\alpha_{2g-1}\}$. Similarly, we denote the $\sigma$-orbit of $\beta_{1}$ by $\{\beta_{1},\beta_{3},\ldots,\beta_{2g-1}\}$. We complete these collections to sequences $\{\alpha_{i}\}_{i=1}^{2g}$ and $\{\beta_{i}\}_{i=1}^{2g}$ by defining $\alpha_{2i}=\tau_{i}(\alpha_{2i-1})$ and $\beta_{2i}=\tau_{i}^{-1}(\beta_{2i-1})$ for $i=1,\ldots,g$, and where $\tau_{i}$ is the right Dehn twist around $r_{i}$. See Figures \ref{alphaCurve}, \ref{betaCurve}, \ref{moreAlphaBeta}, and \ref{deltaCurve} for illustrative examples.

Grouping these curves together, we will refer to $A=\{\alpha_{1},\ldots,\alpha_{2g}\}$ as the set of `up' curves and $B=\{\beta_{1},\ldots,\beta_{2g}\}$ as the set of `down' curves.  We will refer to the pair of up curves (resp.~down curves) $\alpha_{2i-1}$ and $\alpha_{2i}$ (resp.~$\beta_{2i-1}$ and $\beta_{2i}$) as `partners', for $i=1,\ldots,g$. 

\begin{figure}[h]
	\begin{minipage}[]{.48\linewidth}
		\centering
		\includegraphics[width=5.5cm]{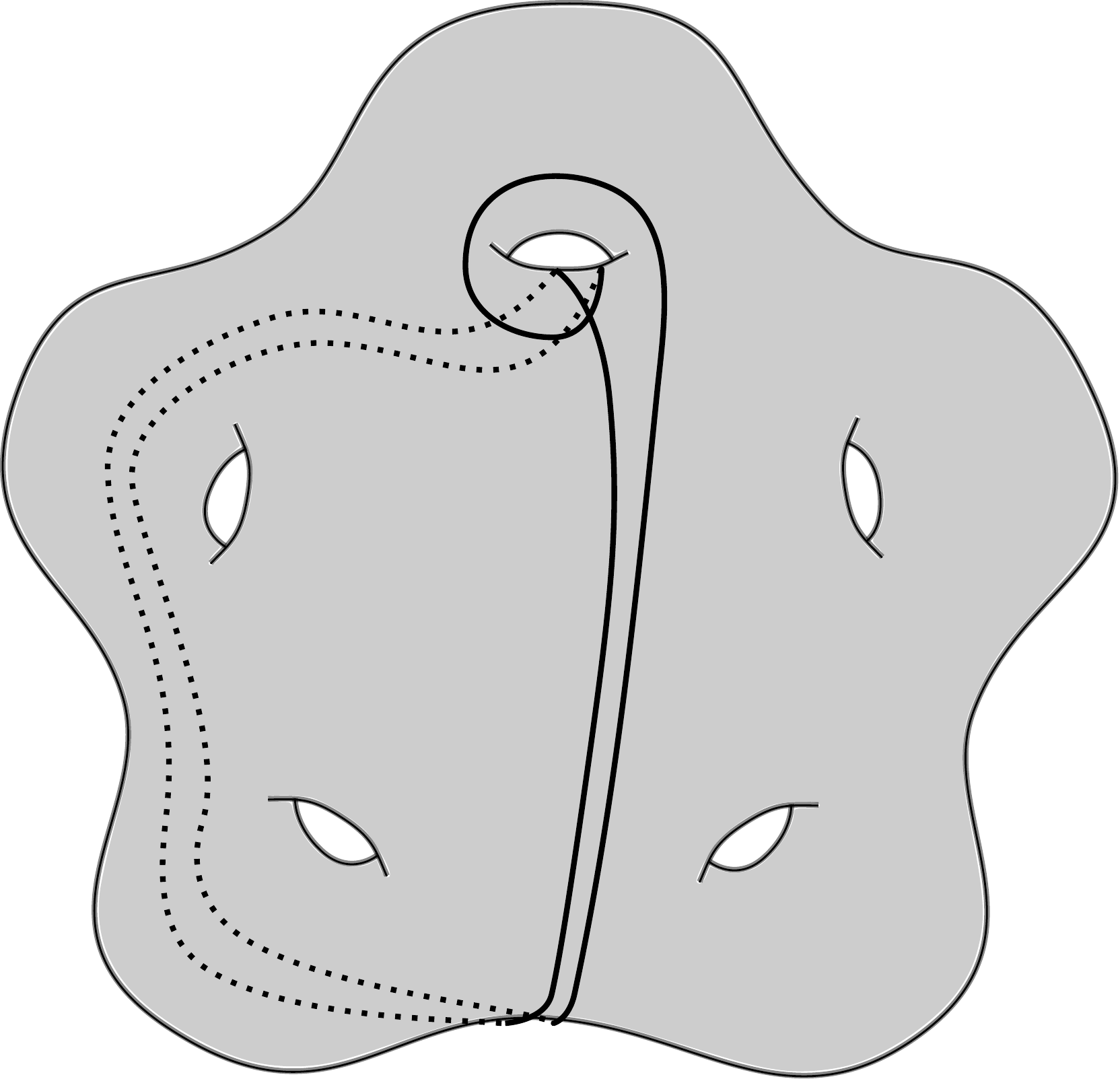}
		\caption{The curves $\alpha_{1},\alpha_{2}\in A$}
		\label{alphaCurve}
	\end{minipage}
	\begin{minipage}[]{.48\linewidth}
		\centering
		\includegraphics[width=5.5cm]{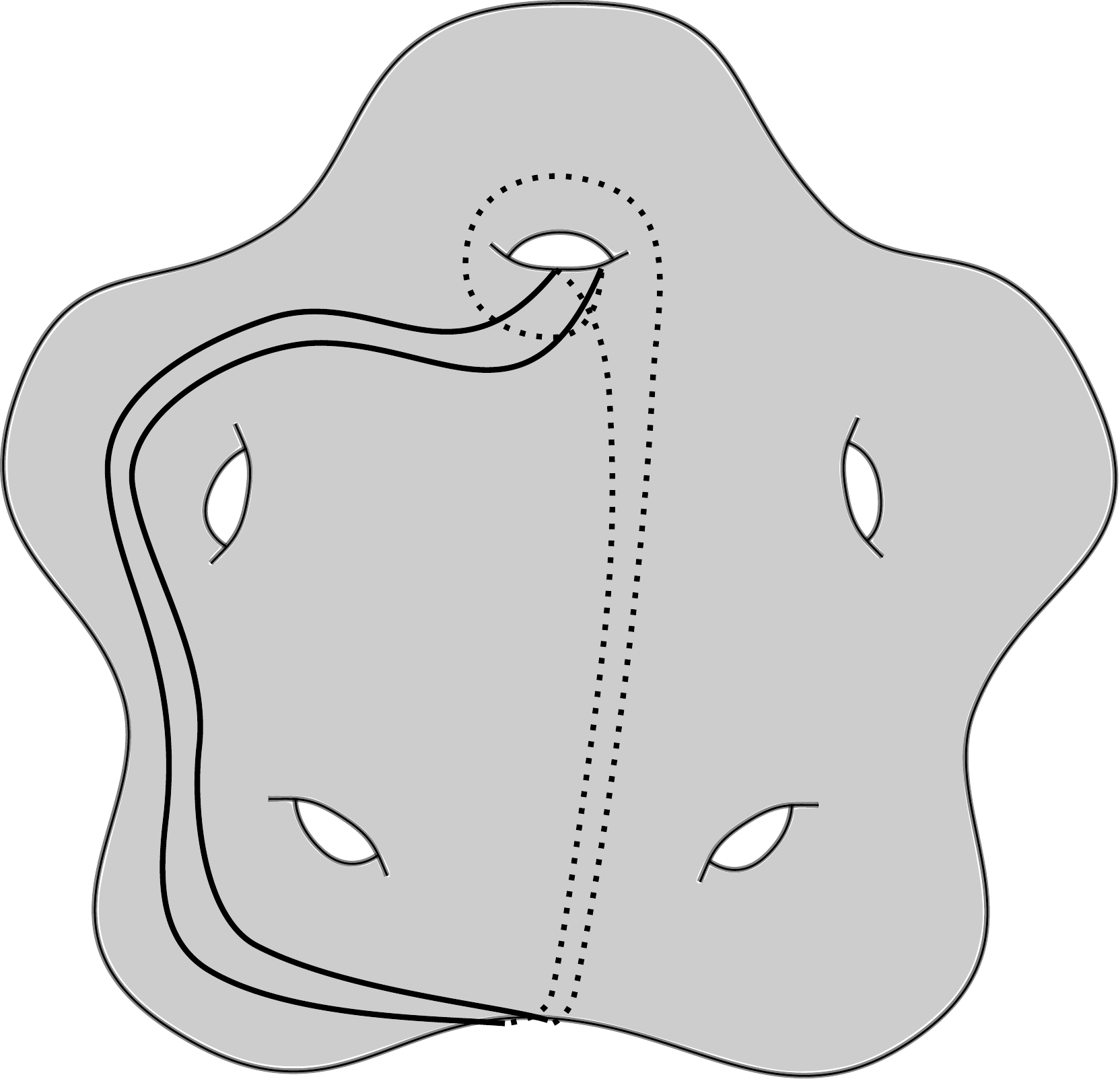}
		\caption{The curves $\beta_{1},\beta_{2}\in B$}
		\label{betaCurve}
	\end{minipage}
\end{figure}
\begin{figure}[h]
	\begin{minipage}[]{.48\linewidth}
		\centering
		\includegraphics[width=5.5cm]{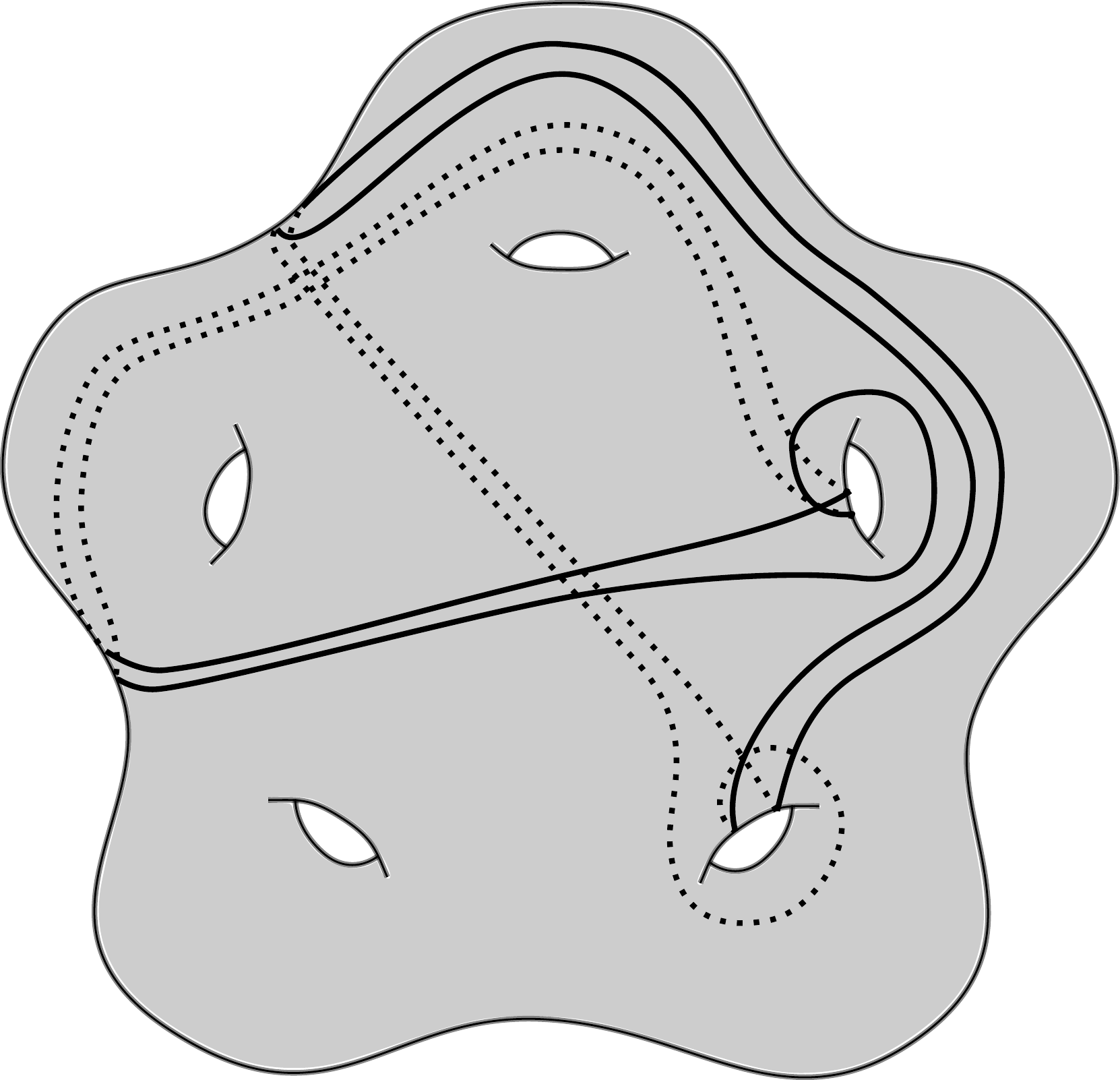}
		\caption{The curves $\alpha_{3},\alpha_{4}\in A$ and $\beta_{5},\beta_{6}\in B$}		
		\label{moreAlphaBeta}
	\end{minipage}
	\begin{minipage}[]{.48\linewidth}
		\centering
		\includegraphics[width=5.5cm]{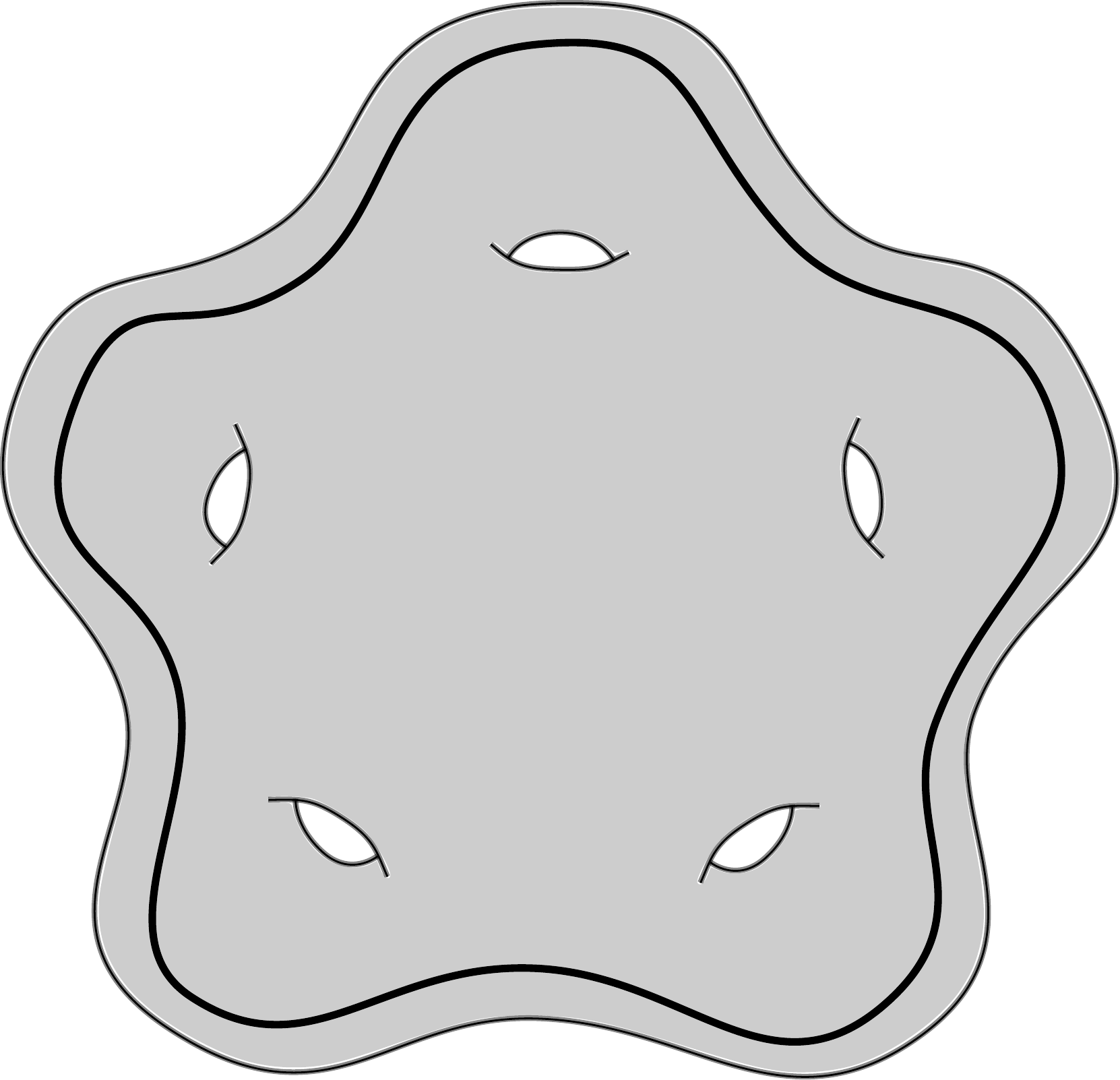}
		\caption{The curve $\delta$}
		\label{deltaCurve}
	\end{minipage}
\end{figure}

It is immediate that, $\alpha_{i}\cap\alpha_{j}=\beta_{i}\cap\beta_{j}=1$ for $i\ne j$. When $\alpha_{i}$ and $\alpha_{j}$ are not partners, then $\alpha_{i}\cap\beta_{j}=1$. This calculation makes it clear that we may form many maximum complete $1$-systems: For each $i=1,\ldots,g$, choose one of the two pairs of partners from $\{\alpha_{2i-1},\alpha_{2i}\}$ and $\{\beta_{2i-1},\beta_{2i}\}$. Together with the $\delta$ curve, this forms $2g+1$ curves that pairwise intersect once. This is maximum by \cite[Thm.~1.4]{m-r-t}.

\begin{figure}[h]
	\centering
	\includegraphics[height=8.5cm]{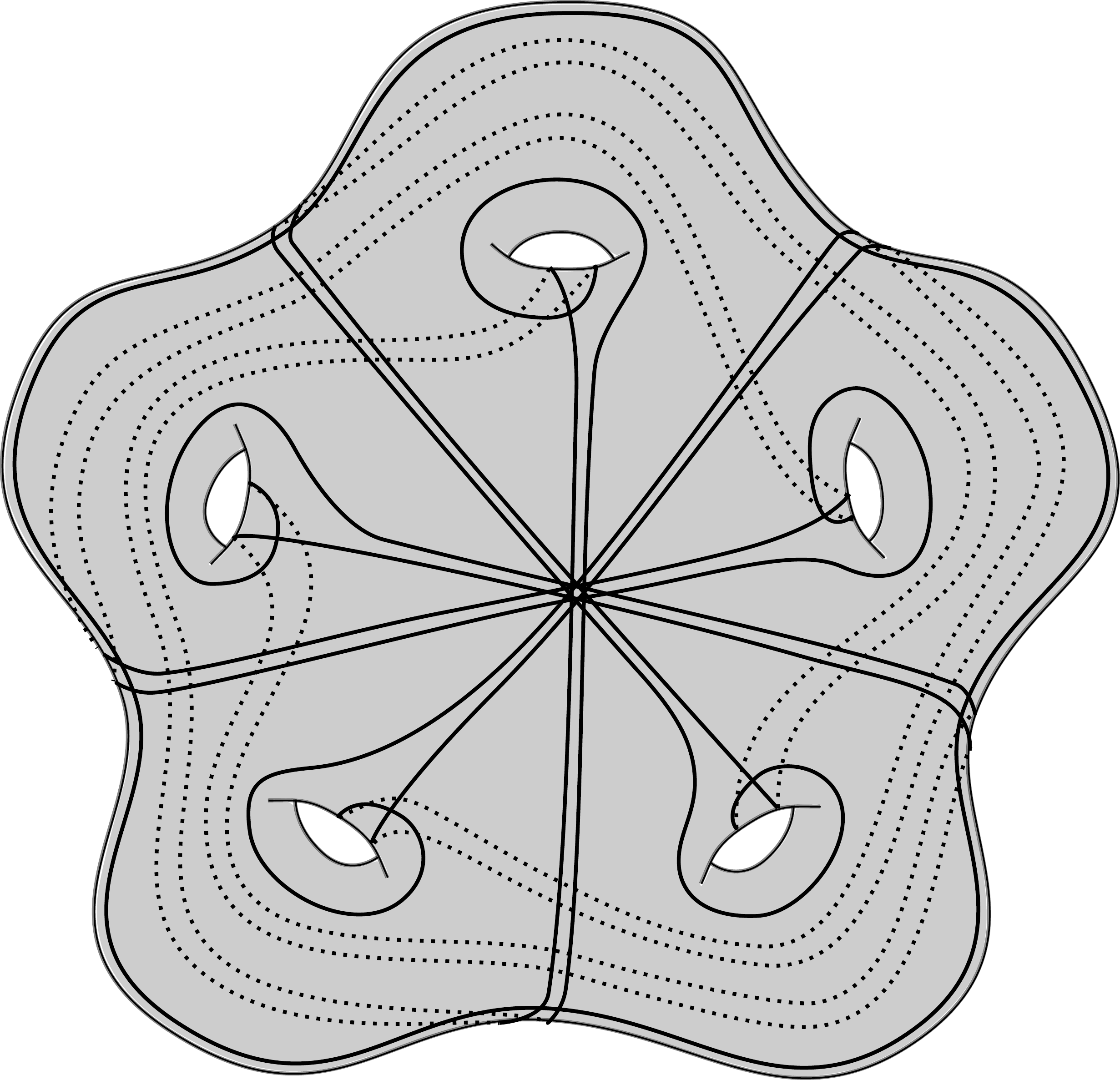}
	\caption{The system of curves $\Gamma(1,1,1,1,1)$}
	\label{Gamma(1,1,1,1,1)}
\end{figure}

\begin{figure}[h]
	\centering
	\includegraphics[height=8.5cm]{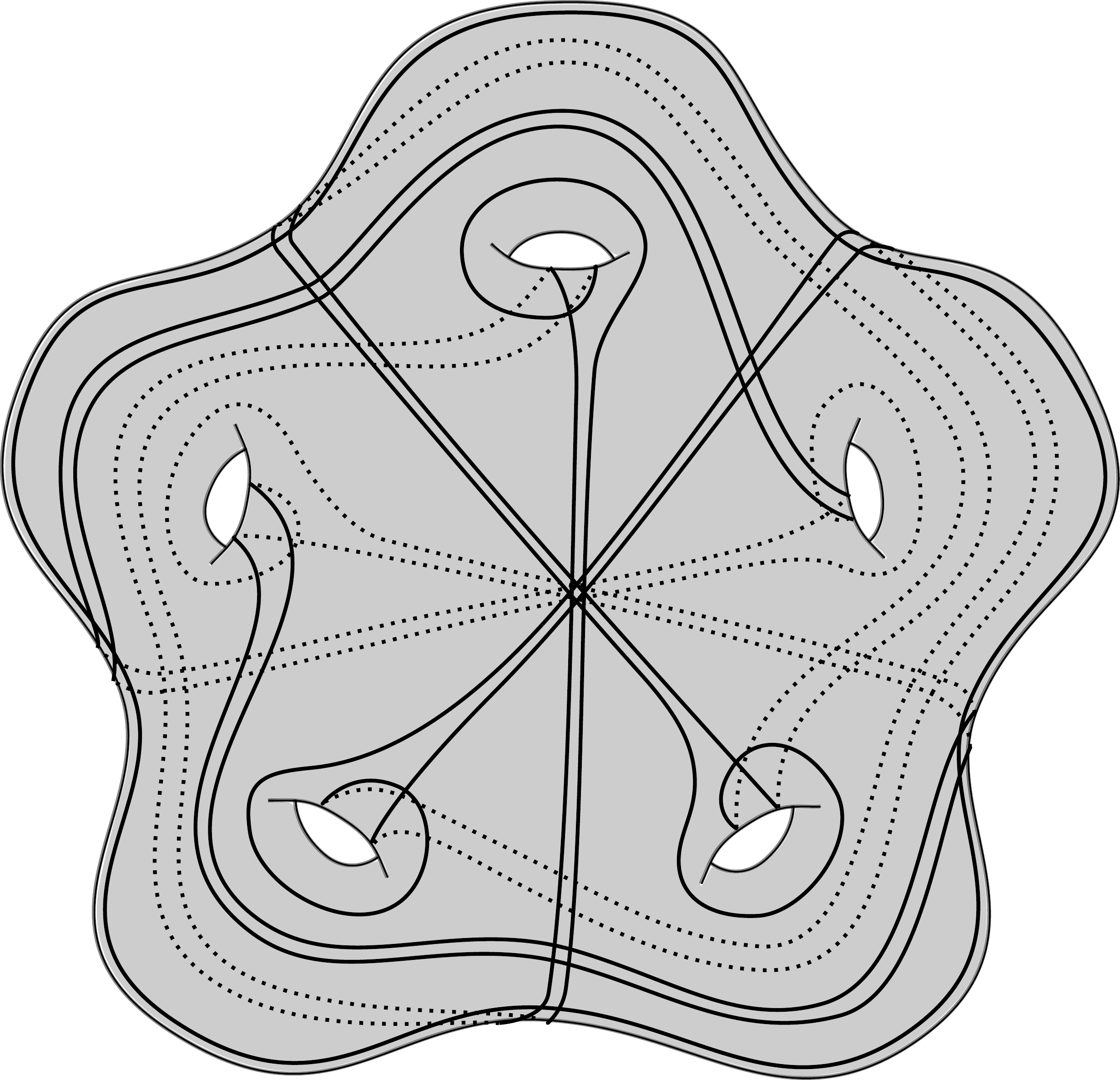}
	\caption{The system of curves $\Gamma(1,-1,1,1,-1)$}
	\label{Gamma(1,-1,1,1,-1)}
\end{figure}

\begin{figure}[h]
	\centering
	\includegraphics[height=8.5cm]{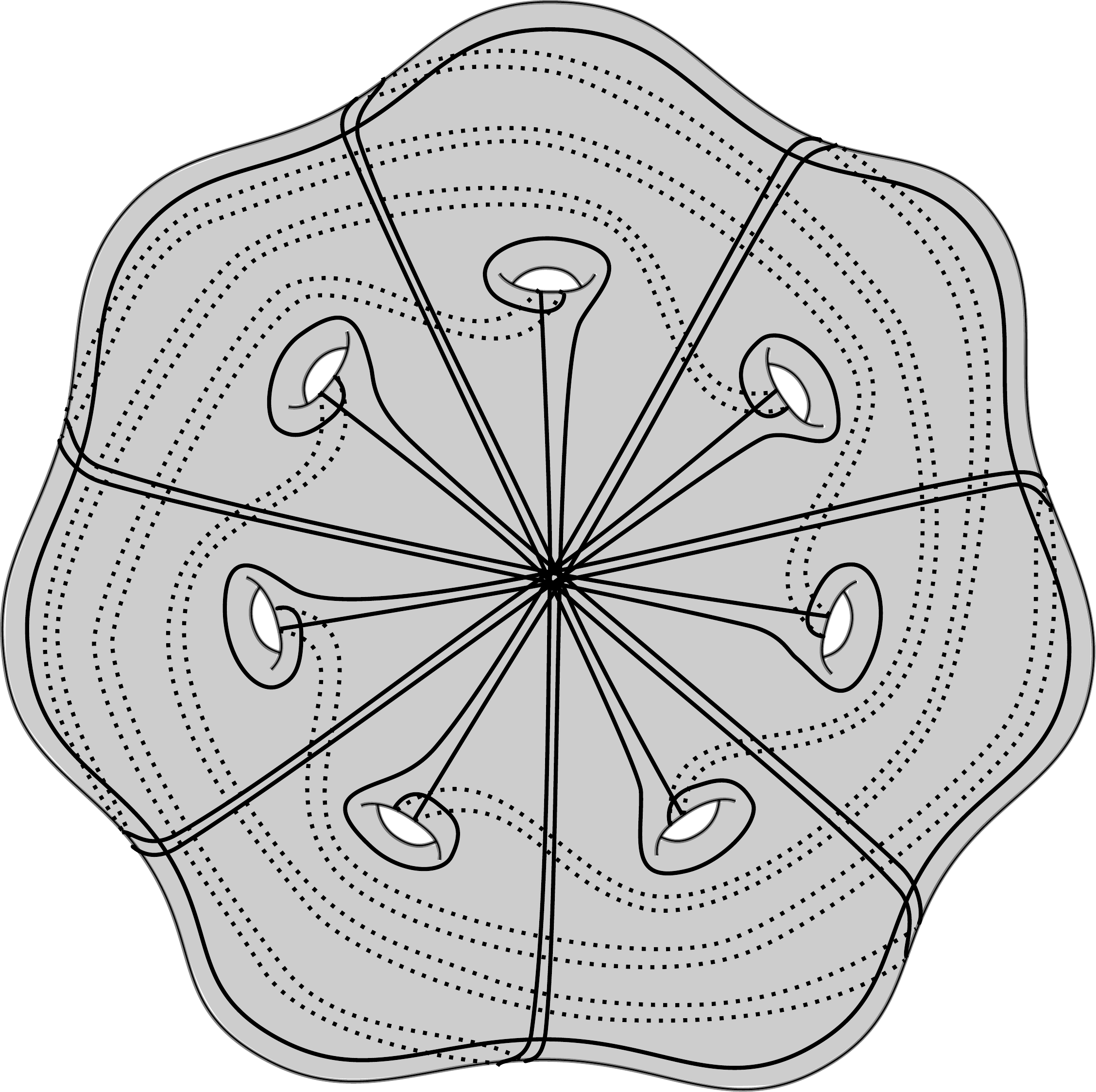}
	\caption{The system of curves $\Gamma(1,1,1,1,1,1,1)$}
	\label{Gamma(1,1,1,1,1,1,1)}
\end{figure}
\begin{figure}[h]
	\centering
	\includegraphics[height=8.5cm]{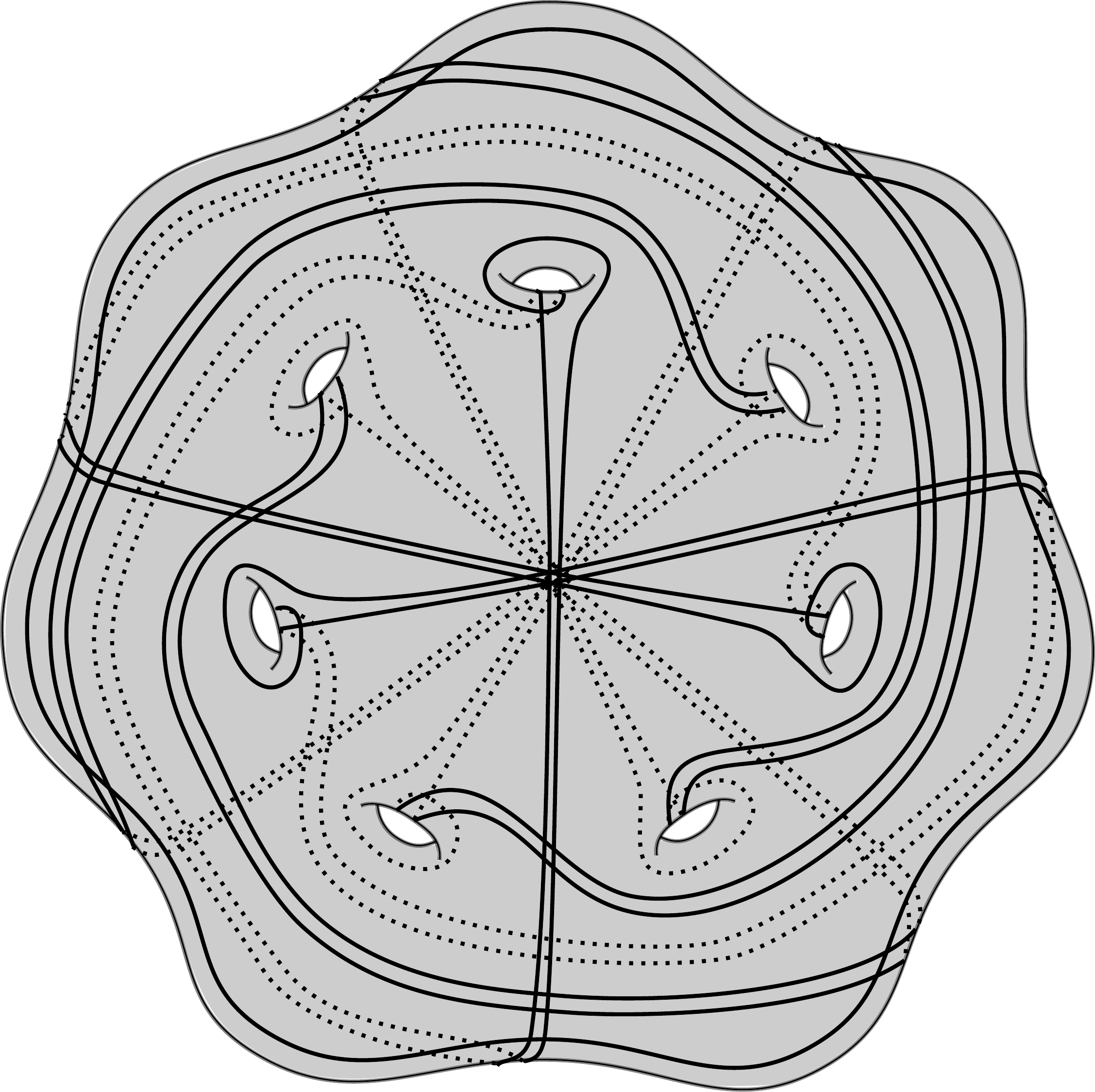}
	\caption{The system of curves $\Gamma(1,-1,1,-1,-1,1,-1)$}
	\label{Gamma(1,-1,1,-1,-1,1,-1)}
\end{figure}

More precisely, for $\epsilon\in\{1,-1\}^{g}$, let $A(\epsilon)=\{\alpha_{2i-1},\alpha_{2i}|\epsilon_{i}=1\}$ and $B(\epsilon)=\{\beta_{2i-1},\beta_{2i}|\epsilon_{i}=-1\}$, and let $\Gamma(\epsilon)=\{\delta\}\cup A(\epsilon)\cup B(\epsilon)$. Several examples in genus 5 and 7 are shown in Figures \ref{Gamma(1,1,1,1,1)}, \ref{Gamma(1,-1,1,1,-1)}, \ref{Gamma(1,1,1,1,1,1,1)}, and \ref{Gamma(1,-1,1,-1,-1,1,-1)}.

\begin{lemma}
\label{examples mc1s}
{For each $\epsilon\in\{-1,1\}^{g}$, the collection of curves $\Gamma(\epsilon)$ forms a maximum complete 1-system.}
\end{lemma}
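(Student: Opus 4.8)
The plan is to verify that each $\Gamma(\epsilon)$ has exactly $2g+1$ curves and that these curves pairwise intersect exactly once, which by the cited result of Malestein-Rivin-Theran \cite[Thm.~1.4]{m-r-t} automatically makes it a \emph{maximum} complete $1$-system. Counting is immediate from the definition: for each $i=1,\ldots,g$, exactly one of the partner pairs $\{\alpha_{2i-1},\alpha_{2i}\}$ (if $\epsilon_i=1$) or $\{\beta_{2i-1},\beta_{2i}\}$ (if $\epsilon_i=-1$) is included, contributing $2$ curves, plus the curve $\delta$, for a total of $2g+1$.

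The substance is the intersection computation, and the text already asserts the key facts: $i(\alpha_i,\alpha_j)=i(\beta_i,\beta_j)=1$ for $i\ne j$, and $i(\alpha_i,\beta_j)=1$ whenever $\alpha_i,\alpha_j$ (equivalently $\beta_i,\beta_j$) are \emph{not} partners. So the proof of Lemma \ref{examples mc1s} reduces to assembling these: within $A(\epsilon)$ all intersections are $1$; within $B(\epsilon)$ all intersections are $1$; and for $\alpha\in A(\epsilon)$, $\beta\in B(\epsilon)$, since $A(\epsilon)$ and $B(\epsilon)$ use \emph{disjoint} index sets of handles (one coming from $\epsilon_i=1$, the other from $\epsilon_i=-1$), $\alpha$ and $\beta$ are never partners, so $i(\alpha,\beta)=1$. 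It remains to check that $\delta$ meets each $\alpha_i$ and each $\beta_i$ exactly once; I would do this either by direct inspection of Figures \ref{alphaCurve}--\ref{deltaCurve} or by a homology/algebraic-intersection-number computation using the stated $\pi_1$-presentation, confirming the geometric intersection number equals the (nonzero) algebraic one because the relevant curves can be drawn without bigons.

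The main obstacle — and really the only nontrivial point — is justifying the asserted intersection numbers, i.e.\ that the minimal-position representatives of these explicitly-given homotopy classes intersect as claimed, rather than merely that their algebraic intersection numbers are $\pm1$. The cleanest route is to exhibit the curves concretely (as in the referenced figures) as arcs in the handles and a band around the equator, observe that in this realization every pair of listed curves crosses exactly once, and invoke the bigon criterion \cite[Prop.~1.7, p.~31]{farb-margalit} to conclude these are minimal-position realizations; equivalently, one notes that a single transverse crossing with nonzero algebraic count is automatically minimal. Since \textbf{every} pair among the $2g+1$ curves then intersects exactly once, $\Gamma(\epsilon)$ is a complete $1$-system, and maximality is immediate from \cite[Thm.~1.4]{m-r-t}. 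I would also remark that this holds uniformly for all $\epsilon$, since relabeling handles via powers of $\sigma$ and applying the partner-swapping Dehn twists $\tau_i$ shows the verification is the same in each handle.
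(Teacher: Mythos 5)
Your proposal is correct and follows essentially the same route as the paper: the paper records the pairwise intersection numbers ($\alpha_i\cap\alpha_j=\beta_i\cap\beta_j=1$, and $\alpha_i\cap\beta_j=1$ for non-partners) as immediate from the explicit construction, observes that choosing one partner pair per handle together with $\delta$ gives $2g+1$ curves pairwise intersecting once, and invokes \cite[Thm.~1.4]{m-r-t} for maximality. Your added remark that a single transverse crossing forces geometric intersection number one (no bigon possible) is just a more explicit justification of what the paper calls ``immediate.''
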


There is an action of the dihedral group $D_{g}$ on $\{1,-1\}^{g}$ given by letting the generators act by a $g$-cycle and a reversal of the list, respectively. Letting $\Z/2\Z$ act by taking $\epsilon$ to $-\epsilon$, we obtain an action of $\Z/2\Z\oplus D_{g}$ on $\{1,-1\}^{g}$, and there is naturally an action of $\mbox{Mod}^{*}(S)$ on systems of conjugacy classes of curves on $S$. The following proposition, whose proof occupies the bulk of our analysis in \S\ref{using dual cube complex section} and \S\ref{polygon section}, implies that the maximum complete 1-systems from Lemma \ref{examples mc1s} represent many distinct orbits.  

\begin{proposition}
\label{distinguishing orbits}
{If $\Gamma(\epsilon)$ and $\Gamma(\epsilon')$ are in the same $\mbox{Mod}^{*}(S)$-orbit, then $\epsilon$ and $\epsilon'$ are in the same $(\Z/2\Z\oplus D_{g})$-orbit in $\{1,-1\}^{g}$.}
\end{proposition}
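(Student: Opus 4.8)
The plan is to use Theorem \ref{CubeChar}: since each $\Gamma(\epsilon)$ fills $S$, two of them lie in the same $\mbox{Mod}^*(S)$-orbit if and only if their dual cube complexes are isomorphic, with the isomorphism carrying hyperplanes to hyperplanes. So the real task is to read off from $C(\Gamma(\epsilon))$ enough combinatorial data to recover the $(\Z/2\Z \oplus D_g)$-orbit of $\epsilon$. The first step is to establish which triples of curves from $\Gamma(\epsilon)$ form a $3$-cube — equivalently (Lemma \ref{tool1}, Corollary \ref{3-to-n subset}) which triples form a triangle — purely in terms of the membership pattern recorded by $\epsilon$. My expectation is that the curve $\delta$, any two ``up'' curves, any two ``down'' curves, and mixed triples behave in a way governed entirely by (i) whether the indices are in ``up'' blocks or ``down'' blocks and (ii) the cyclic/linear position of the blocks; working out the intersection patterns and triangles from the explicit words in $\pi_1(S,x)$ given for $\alpha_1,\beta_1,\delta$ (and their $\sigma$-, $\tau_i$-translates) is the computational heart here.

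Next I would package this into an invariant. The cleanest route: build a labeled graph (or simplicial complex) $T(\epsilon)$ whose vertices are the curves of $\Gamma(\epsilon)$ and whose edges/triangles record which triples form $3$-cubes in $C(\Gamma(\epsilon))$; by Corollary \ref{3-to-n subset} this is intrinsic to the cube complex, and by Theorem \ref{CubeChar} a mapping-class-group equivalence $\Gamma(\epsilon) \to \Gamma(\epsilon')$ induces an isomorphism $T(\epsilon) \cong T(\epsilon')$ respecting the hyperplane/curve correspondence. Then I would show, by a direct combinatorial analysis, that the isomorphism type of $T(\epsilon)$ (as a labeled complex, remembering the distinguished vertex $\delta$ and the partner-pairing into $g$ blocks) determines $\epsilon$ up to cyclically rotating the $g$ blocks, reversing their order, and globally swapping ``up'' $\leftrightarrow$ ``down'' — precisely the $\Z/2\Z \oplus D_g$ action. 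The rotation and reversal symmetries come from $\sigma$ and the orientation-reversing symmetry of the handle configuration; the $\epsilon \mapsto -\epsilon$ symmetry comes from an involution exchanging $\alpha$- and $\beta$-curves, which one should exhibit (or at least argue is forced by the structure of $T$).

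The main obstacle I anticipate is the explicit computation of which triples form triangles: one must honestly compute geometric intersection patterns among $\delta$ and the various $\alpha$'s and $\beta$'s from the group-theoretic words, determine the combinatorics of complementary regions, and verify that a $3$-cube appears exactly for the expected ``compatible position'' triples. This is where the construction's specifics (the exponents $s_1$ vs. $s_1^{-1}$, the product $\prod [s_i,r_i]$, and the role of the Dehn twists $\tau_i$ defining partners) actually enter, and getting the triangle-count bookkeeping right — including distinguishing the ``partner'' pairs from non-partner pairs inside $T(\epsilon)$ — is delicate. A secondary subtlety is making sure the invariant is fine enough: if $T(\epsilon)$ alone did not distinguish the orbits, one would need to enrich it, e.g. by recording higher-dimensional cubes or the local structure of links in $C(\Gamma(\epsilon))$; I would keep that in reserve but expect the triangle data, together with the distinguished vertex $\delta$ and the partner pairing, to suffice. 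Finally, one must check the converse direction is not needed here — the proposition is only the ``only if'' implication — so no construction of explicit homeomorphisms realizing the $D_g$ and $\Z/2\Z$ symmetries is strictly required, though exhibiting them makes the $(\Z/2\Z \oplus D_g)$-orbit statement sharp.
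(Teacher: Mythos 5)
Your first stage is fine and matches the paper: from Lemma \ref{3cubes} and the maximal-cube analysis (Lemmas \ref{max cubes}, \ref{max cubes 2}) one gets Proposition \ref{MCG-orbit restrictions}, i.e.\ any equivalence fixes $\delta$, preserves or exchanges the up/down sets, and sends partners to partners. The gap is in the second stage: the invariant $T(\epsilon)$ you propose (which triples form $3$-cubes, remembering $\delta$ and the partner pairing) is too coarse to recover the $(\Z/2\Z\oplus D_{g})$-orbit. By Lemma \ref{3cubes}, the $3$-cube pattern inside $\Gamma(\epsilon)$ is determined entirely by the partition into $\{\delta\}$, up curves, and down curves together with the partner pairing: every triple of ups (or of downs) works, a partner pair works with $\delta$ and with any curve of the opposite type, any mixed pair works with $\delta$, and nothing else. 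Hence $T(\epsilon)$, even with its decorations, depends only on the pair $(|A(\epsilon)|,|B(\epsilon)|)$, i.e.\ on how many $+1$'s and $-1$'s occur in $\epsilon$, and sees nothing of their interleaving. For instance in genus $5$, $\epsilon=(1,1,-1,-1,-1)$ and $\epsilon'=(1,-1,1,-1,-1)$ give isomorphic $T$'s but lie in different $(\Z/2\Z\oplus D_{5})$-orbits. Your fallback of enriching $T$ by higher-dimensional cubes cannot repair this: since $\Gamma(\epsilon)$ is a $1$-system, Corollary \ref{3-to-n subset} says precisely that which subsets of curves form $n$-cubes is already determined by which triples form $3$-cubes. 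This is exactly the point where the paper remarks, after Proposition \ref{MCG-orbit restrictions}, that it is ``not yet able'' to conclude and must pass to finer invariants.

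What is actually needed is data not visible in the cube-dimension bookkeeping, and the paper extracts it as ordering information: an oriented up curve induces a well-defined ordering of the full set of down curves (Lemma \ref{ordering sans 3cubes}; well-definedness uses precisely the \emph{absence} of $3$-cubes for the relevant triples, so no Reidemeister III move can permute intersection points), and coherent orientations plus the orientation of $S$ package these orderings into the labeled polygon $P(\epsilon)$ with edge labels $\mathcal{M}_l(e),\mathcal{N}_l(e)$. Lemma \ref{Alt} then recovers from these labels the number of $-1$'s of $\epsilon$ lying between consecutive $+1$'s, which determines $\epsilon$ up to rotation, reversal, and global sign (Proposition \ref{Iso}); combined with Proposition \ref{MCG-orbit restrictions} (and Remark \ref{preservesR1}) this yields the proposition. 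So to complete your argument you would have to supplement the triangle/$3$-cube data with an invariant of this finer, order-theoretic type — e.g.\ the cyclic order in which a hyperplane of $C(\Gamma(\epsilon))$ crosses the others, which is what $P(\epsilon)$ encodes — rather than with more cubes.
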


We will also require the simple observation:

\begin{lemma}
\label{mc1s filling}
{A maximum complete $1$-system is filling.}
\end{lemma}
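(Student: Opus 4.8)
The plan is to argue by contradiction using an Euler characteristic / covering space obstruction. Suppose $\Gamma$ is a maximum complete $1$-system that is not filling; then the complement $S \setminus \Gamma$ has a connected component $R$ which is not a topological disk. The strategy is to produce from this a curve $\gamma$ that can be added to $\Gamma$, contradicting the maximality statement of Malestein--Rivin--Theran (i.e.\ that such a system has at most $2g+1$ curves and $\Gamma$ already realizes that bound, so nothing can be added).

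First I would recall that, since $\Gamma$ consists of $2g+1$ simple closed curves pairwise intersecting exactly once, the realization $\lambda$ of $\Gamma$ has exactly $\binom{2g+1}{2} = g(2g+1)$ transverse double points, all of valence four, so the graph $\lambda \subset S$ has $V = g(2g+1)$ vertices and $E = 2g(2g+1)$ edges. If $\Gamma$ were filling, every face of the complement would be a disk, and the face count would be $F = \chi(S) - V + E = (2 - 2g) + g(2g+1) = 2 + g$; this is a sanity check rather than a step. If $\Gamma$ is \emph{not} filling, some complementary region $R$ is not a disk, so $R$ carries an essential simple closed curve $\gamma$ — either a non-peripheral one if $R$ has positive genus or more than one boundary component in the appropriate sense, or, if $R$ is an annulus, a core curve. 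The point is that such a $\gamma$ is disjoint from every curve of $\Gamma$ (it lies in the open complement), hence $i(\gamma, \gamma_j) = 0$ for all $j$.

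Now the key step: I claim $\gamma$ is not isotopic to any $\gamma_j$, and then $\Gamma \cup \{\gamma\}$ is a $1$-system of size $2g+2$, contradicting \cite[Thm.~1.4]{m-r-t}. To see $\gamma \not\simeq \gamma_j$: if $\gamma$ were isotopic to $\gamma_j$, then since $\gamma$ lies in the component $R$ disjoint from all curves of $\Gamma$, we could isotope $\gamma_j$ off of itself keeping it disjoint from the other curves — but $\gamma_j$ intersects $\gamma_k$ (for $k \neq j$) exactly once, an odd number, so it cannot be made disjoint from $\gamma_k$; contradiction once $g \geq 1$. (One must be slightly careful that $\gamma$ is genuinely essential, i.e.\ not null-homotopic and not boundary-parallel in a way that makes it inessential in $S$; this is where the non-disk hypothesis on $R$ is used — a non-disk planar subsurface with all boundary circles bounding disks on the other side would force those disks together with $R$ to be a sphere, impossible, and more generally an innermost essential curve in $R$ stays essential in $S$.)

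The main obstacle I anticipate is handling the degenerate possibilities for the non-filling region $R$ cleanly: ruling out that the only non-disk region is an annulus both of whose boundary curves are inessential in $S$, or a Möbius-band-like configuration — but $S$ is orientable so the latter does not arise, and the former is excluded because contracting the disks would change $\chi$ incorrectly. Once one knows $R$ contains a curve $\gamma$ essential in $S$ and disjoint from $\Gamma$, the parity argument above finishes it immediately, so the entire content is the short topological argument that a non-filling complement yields such a $\gamma$, which can be done in a couple of lines by taking an essential simple closed curve in the (possibly capped-off) subsurface $R$.
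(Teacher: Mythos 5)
There is a genuine gap at the crucial step. The bound of Malestein--Rivin--Theran that you invoke (Thm.\ 1.4, which the paper quotes as ``any collection of curves pairwise intersecting once has cardinality at most $2g+1$'') applies to collections of curves pairwise intersecting \emph{exactly} once, i.e.\ to complete $1$-systems, not to $1$-systems in general. The curve $\gamma$ you produce is by construction \emph{disjoint} from every curve of $\Gamma$, so $\Gamma \cup \{\gamma\}$ is a $1$-system of size $2g+2$ but it is not a collection of curves pairwise intersecting exactly once, and thus does not contradict the MRT bound. (Indeed, general $1$-systems can be far larger than $2g+1$; the paper itself remarks that even the asymptotics of maximum $1$-systems are unknown, citing Przytycki.) So the contradiction you want does not materialize, and the argument collapses precisely at the point you flagged as ``the key step.'' The first paragraph (the Euler characteristic count and the observation that a non-filling complement carries an essential curve disjoint from $\Gamma$) is fine and matches the paper's starting point, and your verification that $\gamma$ is not isotopic to any $\gamma_j$ is correct but ultimately irrelevant since the set you build is not of the right type.

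The paper's fix is short and worth knowing: after producing the essential simple closed curve $\alpha$ disjoint from $\Gamma$, cut $S$ along $\alpha$ and cap off the two boundary circles with disks. Since all curves of $\Gamma$ pairwise intersect, they all lie in a single component of the result, which has genus $g' \le g-1$. On that surface you still have $2g+1$ curves pairwise intersecting \emph{exactly} once, and now $2g+1 > 2g'+1$, which genuinely contradicts the MRT bound. The point is to keep the same complete $1$-system and lower the genus, rather than to enlarge the system (which inevitably breaks completeness).
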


\begin{proof}
{Suppose not. Then there is a simple closed curve $\alpha$ disjoint from the curves in our $1$-system. Cut open $S$ along $\alpha$, and cap off the two resulting boundary components created with disks. Note that the resulting surface may be disconnected. In any case, the set of $2g+1$ curves obtained forms a maximum complete $1$-system of curves on a surface of genus $g'\le g-1$, contradicting \cite[Thm.~1.4]{m-r-t}.}
\end{proof}

In fact, one can show that any $2g$ curves from a maximum complete 1-system are filling, but we will not require this stronger statement.

Let $N(g)$ indicate the number of $\mbox{Mod}^{*}(S)$-orbits among maximum complete 1-systems. A simple argument involving the square complex dual to a realization of curves on $S$ provides an upper bound for $N(g)$ below. For $g$ odd, Proposition \ref{distinguishing orbits} allows using the $\Gamma(\epsilon)$ to provide a lower bound for $N(g)$. For $g$ even, we will use the stabilization procedure described in detail in \S\ref{stabilizing} to obtain lower bounds for $N(g)$. Given a choice of realization for a maximum complete 1-system $\lambda$, and an arc $\alpha$ intersecting each of the curves in $\lambda$ once, stabilization produces a maximum complete 1-system on a surface of genus $g+1$.

When necessary below, we identify $\Gamma(\epsilon)$ with fixed choices of realization for each such collection. In \S\ref{stabilizing} we prove:

\begin{proposition}
\label{stabilizing Gammas}
{There is a choice of arc $\alpha$ so that the stabilizations of $\Gamma(\epsilon)$ and $\Gamma(\epsilon')$ along $\alpha$ are in the same $\mbox{Mod}^*(S_{g+1})$-orbit if and only if $\Gamma(\epsilon)$ and $\Gamma(\epsilon')$ are in the same $\mbox{Mod}^*(S_{g})$-orbit.}
\end{proposition}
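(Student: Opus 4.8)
**The plan is to establish Proposition \ref{stabilizing Gammas} by reducing both directions to Theorem \ref{CubeChar}, using the explicit combinatorial structure of the stabilization.** The ``only if'' direction is essentially free: if $\Gamma(\epsilon)$ and $\Gamma(\epsilon')$ are in the same $\mbox{Mod}^*(S_g)$-orbit, say $\phi\cdot\Gamma(\epsilon)=\Gamma(\epsilon')$, then I would isotope the arc $\alpha$ so that $\phi(\alpha)$ is isotopic to $\alpha$ (this requires that $\alpha$ be chosen equivariantly with respect to all the symmetries realized by the relevant $\phi$'s, which is where the phrase ``there is a choice of arc $\alpha$'' does the work), and then $\phi$ extends over the stabilizing annulus to a homeomorphism of $S_{g+1}$ carrying $\gamma',\gamma''$ (for $\epsilon$) to $\gamma',\gamma''$ (for $\epsilon'$). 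So the bulk of the work is the ``if'' direction: assuming the stabilizations agree up to $\mbox{Mod}^*(S_{g+1})$, recover that $\Gamma(\epsilon)$ and $\Gamma(\epsilon')$ agree up to $\mbox{Mod}^*(S_g)$.

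\textbf{For the hard direction, the strategy is to show that the stabilized system's dual cube complex canonically ``remembers'' the original system's.} By Lemma \ref{mc1s filling} all systems in sight are filling, so Theorem \ref{CubeChar} applies: the stabilizations are $\mbox{Mod}^*(S_{g+1})$-equivalent iff there is a cube complex isomorphism $C(\widehat\Gamma(\epsilon))\cong C(\widehat\Gamma(\epsilon'))$ carrying hyperplanes to hyperplanes compatibly with the set map on curves. The key structural point is that the two new curves $\gamma',\gamma''$ play a distinguished role: they are the unique pair of partnered curves meeting only inside the annulus $A$, and together with $\gamma=\delta$ (or whichever curve was split) they form a recognizable sub-configuration. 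Concretely, I would argue that any hyperplane isomorphism must send the pair $\{\gamma',\gamma''\}$ of $\widehat\Gamma(\epsilon)$ to the pair $\{\gamma',\gamma''\}$ of $\widehat\Gamma(\epsilon')$ — for instance because these are the only two curves whose associated hyperplanes have a prescribed local intersection pattern that distinguishes them from the curves of the original system (each of $\gamma',\gamma''$ runs parallel to $\gamma$ outside $A$, so the hyperplanes they determine interact with all the other hyperplanes exactly as $\gamma$'s does, while $\gamma',\gamma''$ cross each other once; no original curve has this property). Once the pair is pinned down, ``forgetting'' $\gamma',\gamma''$ and the annulus should yield an isomorphism $C(\Gamma(\epsilon))\cong C(\Gamma(\epsilon'))$ respecting the curve set maps, and a second application of Theorem \ref{CubeChar} gives the desired element of $\mbox{Mod}^*(S_g)$.

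\textbf{The main obstacle I anticipate is making precise the ``forgetting'' operation on cube complexes and verifying it behaves as claimed.} Deleting a hyperplane from a CAT(0) cube complex is a standard operation (it produces the two ``half-spaces'' and one restricts the structure), but here I need to delete two hyperplanes and simultaneously undo the topological stabilization, and check that the result is precisely $C(\Gamma(\epsilon))$ rather than merely something homotopy equivalent or quasi-isometric to it. The cleanest route may be to avoid abstract cube-complex surgery altogether: instead, use the hyperplane-to-curve correspondence of Theorem \ref{sageev's} to extract directly from the cube isomorphism a bijection of curve conjugacy classes in $\pi_1 S_{g+1}$ matching $\Gamma(\epsilon)\cup\{\gamma',\gamma''\}$ with $\Gamma(\epsilon')\cup\{\gamma',\gamma''\}$ and sending $\{\gamma',\gamma''\}$ to $\{\gamma',\gamma''\}$; then observe that the $\mbox{Mod}^*(S_{g+1})$ element produced by Dehn--Nielsen--Baer, after being arranged to fix the annulus $A$ and the curve $\gamma$, descends to $S_g$ upon collapsing $A$. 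I would also need to check that $\gamma',\gamma''$ cannot be confused with any curve of the original system even after an arbitrary automorphism of $\pi_1$ — this is where the $1$-system structure and the specific intersection counts among the $\Gamma(\epsilon)$ (each new curve meets every old curve exactly once, and the two new curves are ``parallel at infinity'' except for their single mutual crossing) should be invoked to rule out the bad matchings.
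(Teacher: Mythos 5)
There is a genuine gap at the step where you pin down the new curves. Your claim that a hyperplane isomorphism must send the pair $\{\gamma',\gamma''\}$ to the pair $\{\gamma',\gamma''\}$ because ``no original curve has this property'' is false as stated: the curve $\gamma=\delta$ that was stabilized has exactly the same intersection pattern as $\gamma'$ and $\gamma''$ --- it meets each of them once, meets every other curve of the system once, and in fact $\delta,\delta',\delta''$ are homotopic as properly embedded arcs in the complement of a one-holed torus $\Sigma$ disjoint from all the other curves. Nothing in the combinatorics distinguishes the pair $\{\delta',\delta''\}$ from $\delta$, and the paper accordingly only proves that the \emph{triple} $\{\delta,\delta',\delta''\}$ is preserved (Proposition \ref{MCG-orbit restrictions stabilized}); even that weaker statement requires an actual argument, namely a parity comparison of the dimensions of maximal cubes through each hyperplane (even for the hyperplanes of $A(\epsilon)$ and $B(\epsilon)$, odd for those of $\delta,\delta',\delta''$, via Lemma \ref{stabilized 3-cubes}), which your proposal does not supply --- it is replaced by the assertion of a ``prescribed local intersection pattern.'' Your anticipated obstacle about ``forgetting'' is also real: deleting the two hyperplanes does not yield $C(\Gamma(\epsilon))$ (the paper's own caution around Figure \ref{3 simple curves} shows $C(\Gamma')$ need not sit inside $C(\Gamma)$, and $\Gamma(\epsilon)$ is not even filling on $S_{g+1}$, so Theorem \ref{CubeChar} is unavailable there), and your fallback --- arranging the Dehn--Nielsen--Baer homeomorphism to ``fix the annulus $A$ and the curve $\gamma$'' --- again presupposes exactly the separation of $\gamma$ from $\{\gamma',\gamma''\}$ that cannot be extracted.

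The repair is to notice that this separation is unnecessary. Once you know $\phi\cdot\{\delta,\delta',\delta''\}=\{\delta,\delta',\delta''\}$ (proved by the maximal-cube parity argument), $\phi$ can be taken to preserve the one-holed torus $\Sigma$ determined by this triple; deleting $\Sigma$ and collapsing the boundary to a point gives an induced $\tilde\phi\in\mbox{Mod}^*(S_g)$ under which \emph{all three} curves $\delta,\delta',\delta''$ descend to the single class $\delta$, while the remaining curves, being disjoint from $\Sigma$, descend to $\Gamma(\epsilon)\setminus\{\delta\}$ and $\Gamma(\epsilon')\setminus\{\delta\}$ respectively. Hence $\tilde\phi\cdot\Gamma(\epsilon)=\Gamma(\epsilon')$ without ever deciding which of the three curves is the ``old'' one. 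Your easy direction (choosing the arc so that the relevant homeomorphisms extend over the new handle) is consistent with the paper, which leaves it implicit.
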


We restate and prove Theorem \ref{MainThm}:

\begin{customthm}{1}
{We have the bounds $$(4g^{2}+2g)! \ge N(g) \ge \frac{2^{g-1}}{4(g-1)}.$$}
\end{customthm}

\begin{proof}
{The lower bound follows from Proposition \ref{distinguishing orbits} when $g$ is odd, and from Proposition \ref{stabilizing Gammas} when $g$ is even.

Towards the upper bound, consider the set of isomorphism classes of square complexes $\mathcal{S}_{\lambda}$ that are dual to realizations $\lambda$ of maximum complete $1$-systems. For filling systems of curves, the dual square complex is isomorphic to the surface $S$, and the hyperplanes of the square complex are in the homotopy classes of the curves one started with. Lemma \ref{mc1s filling} now guarantees that an isomorphism of square complexes $\mathcal{S}_{\lambda}\cong\mathcal{S}_{\lambda'}$ yields a homeomorphism of $S$ taking $\lambda$ to $\lambda'$. Thus there is a well-defined map from the set of isomorphism classes of square complexes dual to maximum complete $1$-systems to the set of $\mbox{Mod}^{*}(S)$-orbits of maximum complete $1$-systems. This map is evidently surjective, so that an upper bound for the number of possible square complexes dual to a realization of a maximum complete 1-system produces an upper bound for $N(g)$. 

For each realization $\lambda$ of a maximum complete $1$-system, each of the curves in $\lambda$ passes through exactly $2g$ squares of $\mathcal{S}_{\lambda}$. We may thus view $\mathcal{S}_{\lambda}$ as the quotient of the disjoint union of $2g+1$ annuli, each of which is built from $2g$ squares, where the quotient map identifies squares in pairs. There are at most $\binom{2g(2g+1)}{2,\ldots,2}$ pairings, and each pair of matched squares has two possible identifications, giving at most
$$2^{g(2g+1)}\binom{2g(2g+1)}{2,\ldots,2}=2^{g(2g+1)}\cdot\frac{\left(2g(2g+1)\right)!}{2^{g(2g+1)}}=\left(2g(2g+1)\right)!$$
square complexes $\mathcal{S}_{\lambda}$.}
\end{proof}

\section{Restricting mapping class group orbits via $C(\Gamma(\epsilon))$}
\label{using dual cube complex section}
This section is the first step towards the proof of Proposition \ref{distinguishing orbits}.

\begin{proposition}
\label{MCG-orbit restrictions}
{If $\phi\in \mbox{Mod}^{*}(S)$ satisfies $\phi\cdot\Gamma(\epsilon)=\Gamma(\epsilon')$, then 
\begin{enumerate}
\item $\phi \cdot \delta =\delta$, and
\item $\phi\cdot \left\{A(\epsilon),B(\epsilon)\right\}=\left\{A(\epsilon'),B(\epsilon')\right\}$.
\end{enumerate}
Moreover, the map $\phi$ sends partner curves to partner curves.}
\end{proposition}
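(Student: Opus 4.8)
The plan is to read off the three claims from properties of the dual cube complex $C(\Gamma(\epsilon))$ that are invariant under isomorphism, using Theorem~\ref{CubeChar} to translate the mapping-class-group hypothesis into an isomorphism $C(\Gamma(\epsilon)) \cong C(\Gamma(\epsilon'))$ carrying hyperplanes to hyperplanes, i.e.\ carrying the curve $\delta$ and the sets $A(\epsilon), B(\epsilon)$ to hyperplanes of the target. Since the isomorphism must send the hyperplane dual to a given curve to a hyperplane dual to a curve, and must preserve the local structure of cubes containing a hyperplane, the strategy is to find a combinatorial feature of the hyperplane dual to $\delta$ that distinguishes it from all other hyperplanes, and similarly combinatorial features distinguishing partners and distinguishing the ``up/down'' dichotomy.

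First I would establish, via the curves-to-hyperplanes correspondence (Theorem~\ref{sageev's}) together with Theorem~\ref{3-to-n} and Corollary~\ref{3-to-n subset}, exactly which subcollections of $\Gamma(\epsilon)$ span top-dimensional or high-dimensional cubes: by Corollary~\ref{3-to-n subset} a subset $\Gamma' \subset \Gamma(\epsilon)$ spans a $|\Gamma'|$-cube iff every triple in $\Gamma'$ forms a $3$-cube, and by Lemma~\ref{tool1} the latter is detected by the presence of a triangular complementary region. So the combinatorics of cubes in $C(\Gamma(\epsilon))$ is governed entirely by which triples of curves form triangles in the realization. I would then carry out the (routine but essential) computation: for the explicit realizations in Figures~\ref{Gamma(1,1,1,1,1)}--\ref{Gamma(1,-1,1,-1,-1,1,-1)}, determine which triples $\{\gamma_i,\gamma_j,\gamma_k\}$ bound a triangle. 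The expectation is that $\delta$ is special --- for instance, that $\delta$ together with a pair of partners never forms a triangle, or more generally that $\delta$ participates in triangles with a different pattern than any $\alpha$ or $\beta$ --- so that the hyperplane dual to $\delta$ is characterized by which cubes it lies in (equivalently by the ``link'' of its hyperplane), and hence must be preserved by any cube-complex isomorphism. This gives claim (1): $\phi \cdot \delta = \delta$.

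Next, for the partner statement, I would identify a local combinatorial signature of a partner pair $\{\alpha_{2i-1},\alpha_{2i}\}$ (resp.\ $\{\beta_{2i-1},\beta_{2i}\}$): the key computed fact (stated informally in the text before Lemma~\ref{examples mc1s}) is that partners are the unique pairs in $A \cup B$ that are \emph{not} forced into a common higher cube with certain third curves, or dually, partners are distinguished by failing to bound a triangle with some specific family while all non-partner pairs do. Concretely, a pair of hyperplanes that lies in a common $2$-cube but whose pattern of $3$-cubes through that square differs from that of non-partner pairs will be preserved; so $\phi$ sends the ``partner'' relation to itself, giving the last sentence. Finally, claim (2) --- that $\phi$ preserves the partition $\{A(\epsilon), B(\epsilon)\}$ of the non-$\delta$ curves into the up-set and the down-set --- follows by showing that within $\Gamma(\epsilon) \setminus \{\delta\}$ the cube $C(A(\epsilon) \cup \{\delta\})$ and $C(B(\epsilon) \cup \{\delta\})$, or the maximal cubes not involving $\delta$, realize $A(\epsilon)$ and $B(\epsilon)$ as the two ``halves'' that are internally homogeneous but related to $\delta$ and to each other in an asymmetric way; since $\phi$ fixes $\delta$ (claim 1) and preserves cube dimensions and the partner relation, it must send each half to a half, i.e.\ $\phi\cdot\{A(\epsilon),B(\epsilon)\} = \{A(\epsilon'),B(\epsilon')\}$.

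The main obstacle I anticipate is purely computational but delicate: pinning down \emph{exactly} which triples of curves in the explicit realization $\Gamma(\epsilon)$ bound triangles, and verifying that this triangle-incidence data does in fact distinguish $\delta$ from the $\alpha$'s and $\beta$'s, distinguishes partner pairs, and distinguishes the up/down partition --- all uniformly in $\epsilon$ and in the odd genus $g$. This requires careful bookkeeping with the presentation of $\pi_1(S,x)$ and the $\sigma$- and Dehn-twist-symmetry of the construction, and it is conceivable that the raw triangle data is not quite enough, in which case one would need to pass to a finer invariant (e.g.\ the full link of a hyperplane, or the way hyperplanes separate $\widetilde{C(\Gamma(\epsilon))}$). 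I would structure the write-up so that Proposition~\ref{MCG-orbit restrictions} is reduced to a finite combinatorial check about these realizations, deferring that check to the computations of \S\ref{using dual cube complex section}.
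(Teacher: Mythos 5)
Your proposal follows essentially the same route as the paper: the triangle/$3$-cube data you defer is exactly Lemma \ref{3cubes}, the maximal-cube analysis you sketch is Lemmas \ref{max cubes} and \ref{max cubes 2}, and the paper then distinguishes $\delta$ (its maximal cubes are all $5$-dimensional), the up/down partition (the two large even-dimensional maximal cubes corresponding to $A(\epsilon)$ and $B(\epsilon)$), and partners (a partner pair forms a $3$-cube with $\delta$, a non-partner same-type pair does not), exactly as you outline. Your hedged guesses about the sign of the triangle data are inverted --- $\delta$ together with a partner pair \emph{does} form a triangle, while with a non-partner pair of the same type it does not --- but this is immaterial to the strategy, which is the paper's.
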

In other words, either $\phi$ preserves the sets of up and down curves, or it exchanges them. The proof of this proposition will follow from a coarse picture of the cube complex $C(\Gamma(\epsilon))$. 

\begin{lemma}
\label{3cubes}
{In the complex $C(A\cup B\cup\{\delta\})$, the triples that form $3$-cubes are the following:
\begin{enumerate}
\item $\{\alpha_{i},\alpha_{j},\alpha_{k}\}$ or $\{\beta_{i},\beta_{j},\beta_{k}\}$, for distinct $i,j,k\in\{1,\ldots,2g\}$.
\item $\{\alpha_{2i-1},\alpha_{2i},\delta\}$ or $\{\beta_{2i-1},\beta_{2i},\delta\}$, for $i\in\{1,\ldots,g\}$.
\item $\{\alpha_{2j-1},\alpha_{2j},\beta_{i}\}$ or $\{\beta_{2j-1},\beta_{2j},\alpha_{i}\}$, for $i\in\{1,\ldots,2g\}$ and $j\in\{1,\ldots,g\}$.
\item $\{\alpha_{i},\beta_{j},\delta\}$ for $i,j\in\{1,\ldots,2g\}$.
\end{enumerate}
}
\end{lemma}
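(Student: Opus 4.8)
The plan is to apply Corollary \ref{3-to-n subset} (equivalently Lemma \ref{tool1} together with Theorem \ref{3-to-n}): a triple of curves from $A\cup B\cup\{\delta\}$ forms a $3$-cube in $C(A\cup B\cup\{\delta\})$ if and only if, when put in minimal position, the three curves bound a triangular complementary region. So the task reduces to checking, for each of the finitely many ``combinatorial types'' of triple, whether the relevant three curves form a triangle. First I would fix the explicit realizations of all the $\alpha_i$, $\beta_i$, and $\delta$ used in the construction (the ones depicted in Figures \ref{alphaCurve}--\ref{deltaCurve} and \ref{Gamma(1,1,1,1,1)}), and record the local intersection pattern of each pair; recall that every pair among $A\cup B\cup\{\delta\}$ intersects exactly once \emph{except} partner pairs, so the only triples that could possibly be ambiguous are the ones that actually consist of three curves pairwise intersecting once — and a triple containing a partner pair (two curves meeting zero times) can never span a $3$-cube, since a $3$-cube requires three \emph{mutually intersecting} lifts.

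Next I would organize the triples by how many partner pairs and how many $\delta$'s they contain, reducing to the cases listed: (1) three up curves or three down curves; (2) a partner pair together with $\delta$; (3) a partner pair together with a third curve of the \emph{opposite} type; (4) one up curve, one down curve, and $\delta$. For each case I would exhibit the triangle directly in the fixed realization — this is the ``yes'' direction and amounts to pointing at a triangular face in the picture — and then argue the complementary ``no'' direction: any triple \emph{not} on the list either contains a partner pair with a non-$\delta$, same-type third curve (cases like $\{\alpha_{2i-1},\alpha_{2i},\alpha_k\}$, which is subsumed in (1) only when the third index is not a partner — here I need to be careful and state precisely that (1) includes partner pairs, since three up curves always triangulate regardless), or is one of the genuinely mixed triples $\{\alpha_i,\beta_j,\delta\}$ with $\alpha_i,\beta_j$ partners — but wait, those can't be partners since partners are same-type; so in fact every triple of three pairwise-intersecting curves from this set appears on the list, and I would verify this exhaustively. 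The remaining content is to confirm that for triples of the form $\{\alpha_i,\beta_j,\delta\}$ with $i,j$ \emph{not} corresponding to partnered handles, and for $\{\alpha_i,\beta_j\}$ with $i,j$ partnered-handle indices but opposite type (still intersecting once), a triangle is present — again read off the realization.

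The main obstacle I anticipate is the ``no'' direction combined with bookkeeping: showing that certain pairwise-intersecting triples genuinely fail to bound a triangle in \emph{every} realization. By Lemma \ref{triangles} it suffices to check one realization, so the real work is verifying the absence of a triangular complementary region in the chosen pictures for the triples omitted from (1)--(4) — concretely, triples mixing an up curve, a down curve, and a third curve when none of the three is $\delta$ and no two are partners, where the three arcs near their mutual intersections wind in a way that forces a quadrilateral or worse rather than a triangle. I would handle this by a symmetry reduction: the construction is $\sigma$-equivariant and symmetric under swapping $A\leftrightarrow B$ and under the partner-swapping twists $\tau_i$, so modulo these symmetries only a bounded number of concrete triples must be inspected, and for each I would either draw the triangle or point to the obstructing bigon-free quadrilateral. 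Assembling these case checks, together with the observation that any triple containing a partner pair and not $\delta$-of-opposite-type cannot form a $3$-cube, yields exactly the list (1)--(4).
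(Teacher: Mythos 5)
Your high-level reduction (a triple forms a $3$-cube if and only if it bounds a triangle, checked in a single fixed realization thanks to Lemma \ref{triangles}, with symmetry used to cut down the case list) is the same route the paper takes via Lemmas \ref{triangles} and \ref{tool1}. But your case analysis rests on a misreading of the construction: partner curves $\alpha_{2i-1},\alpha_{2i}$ (resp.\ $\beta_{2i-1},\beta_{2i}$) intersect exactly \emph{once} -- they sit together inside the complete $1$-systems $\Gamma(\epsilon)$ -- not zero times. Hence your blanket claim that ``a triple containing a partner pair can never span a $3$-cube'' is exactly backwards, and it contradicts the statement you are proving: items (2) and (3) of Lemma \ref{3cubes} are precisely partner pairs together with $\delta$ or with an opposite-type curve, and these \emph{do} form $3$-cubes. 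The pairs whose intersection number is not asserted to equal one are the same-handle, opposite-type pairs ($\alpha$ from handle $i$ with $\beta$ from handle $i$); in particular $A\cup B\cup\{\delta\}$ need not be a $1$-system, so Corollary \ref{3-to-n subset} does not apply verbatim -- for triples one should argue directly through the curves-to-hyperplanes correspondence together with Lemmas \ref{tool1} and \ref{triangles}, as the paper does.

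The second gap is your intermediate conclusion that ``every triple of three pairwise-intersecting curves from this set appears on the list.'' This is false, and it is where the real content of the lemma lies: pairwise intersection (even pairwise intersection exactly once) is necessary but not sufficient for a $3$-cube. For example $\{\alpha_1,\alpha_3,\delta\}$ (two non-partner up curves with $\delta$) and $\{\alpha_1,\alpha_3,\beta_k\}$ are pairwise intersecting yet excluded from the list; the paper's proof consists exactly of checking in the fixed realizations that such triples bound no triangle (``there is a triangle if and only if the two up curves are partners''). Your closing paragraph partially acknowledges this for the $\delta$-free mixed triples, but it omits the non-partner triples containing $\delta$, and it is inconsistent with the exhaustiveness claim made just before. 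As written, the proposal would certify an incorrect list (e.g.\ it would admit $\{\alpha_1,\alpha_3,\delta\}$ as a $3$-cube); repairing it requires redoing the bookkeeping with the correct intersection pattern and then genuinely inspecting the chosen realizations (Figures \ref{alphaCurve}, \ref{betaCurve}, \ref{deltaCurve}) in each case organized by type (up/down/$\delta$) and partnership, verifying presence or absence of a triangular complementary region.
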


\begin{proof}
{Using Lemmas \ref{triangles} and \ref{tool1}, we determine whether a triple of curves forms a $3$-cube by choosing a realization of the curves, and observing whether there is a triangular component of the complement. For each of the curves, we fix choices of realizations as in Figures \ref{alphaCurve}, \ref{betaCurve}, and \ref{deltaCurve}.

If $\delta$ is one of the three curves, we arrange the possible ways to choose the other two curves according to whether the curves are chosen as `up' or `down' (i.e.~from $A$ or $B$): If both of the other curves are up, then there is a triangle in the complement of the trio if and only if the other two curves were partners. If one of the curves is up and one is down, there is such a triangle. The other cases are similar.

On the other hand, if $\delta$ is not one of the three curves: If all of the curves are up, there is such a triangle. If two of the curves are up and one is down, there is a triangle in their complement if and only if the two up curves are partners. The other cases are similar.}
\end{proof}

We proceed with an examination of hyperplanes of maximal cubes in the cases for $\epsilon\in\{1,-1\}^{g}$ where $|A(\epsilon)|,|B(\epsilon)|>1$.

\begin{lemma}
\label{max cubes}
{When $|A(\epsilon)|,|B(\epsilon)|>1$, the sets of hyperplanes of maximal cubes of $C(\Gamma(\epsilon))$ correspond to one of the following lists of curves:

\begin{enumerate}
\item The $2|\epsilon^{-1}(1)|$ curves $A(\epsilon)$.
\item The $2|\epsilon^{-1}(-1)|$ curves $B(\epsilon)$.
\item The 5 curves $\{\alpha_{2i-1},\alpha_{2i},\beta_{2j-1},\beta_{2j},\delta\}$, for $i,j\in\{1,\ldots,g\}$ such that $\alpha_{2i}\in A(\epsilon)$ and $\beta_{2j}\in B(\epsilon)$.
\end{enumerate}}
\end{lemma}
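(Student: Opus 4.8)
The plan is to combine the list of $3$-cubes from Lemma \ref{3cubes} with Corollary \ref{3-to-n subset} to determine all collections of pairwise $3$-cube-forming curves, and then identify which such collections are maximal. First I would observe that, since $\Gamma(\epsilon)$ is a $1$-system, Corollary \ref{3-to-n subset} reduces the problem to the following purely combinatorial question: which subsets $T \subset \Gamma(\epsilon)$ have the property that every triple in $T$ appears on the list in Lemma \ref{3cubes}? A maximal cube of $C(\Gamma(\epsilon))$ then corresponds exactly to a maximal such $T$. So the work is entirely bookkeeping with the four families of allowed triples, restricted to the curves actually present in $\Gamma(\epsilon)$ (namely $\delta$, the partner-pairs $\{\alpha_{2i-1},\alpha_{2i}\}$ for $\epsilon_i=1$, and $\{\beta_{2j-1},\beta_{2j}\}$ for $\epsilon_j=-1$).

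Next I would split into cases according to whether $\delta \in T$. If $\delta \notin T$: a triple of up-curves always forms a $3$-cube (family (1)), so all of $A(\epsilon)$ is an admissible set; similarly all of $B(\epsilon)$. A mixed triple of up-curves and down-curves forms a $3$-cube only in the cases of family (3), i.e. when the two curves of one type are partners and the third is of the other type. I would check that one cannot have two non-partner up-curves together with any down-curve, nor two non-partner down-curves with any up-curve; since $|A(\epsilon)|, |B(\epsilon)| > 1$ there exist such non-partner pairs on each side, so any admissible $T$ without $\delta$ that contains curves of both types must be contained in a single partner-pair on one side together with one curve on the other — but that is already inside $A(\epsilon)$ or $B(\epsilon)$ only if... here I would carefully note such a set $\{\alpha_{2i-1},\alpha_{2i},\beta_k\}$ is admissible (family (3)) but is \emph{not} maximal, since if $|B(\epsilon)|>1$ we can also try to enlarge, and in fact one checks it sits inside no larger admissible set once $|A(\epsilon)|>2$... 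This is the delicate point (see below). The conclusion of this case should be that the only maximal admissible $T$ not containing $\delta$ are $A(\epsilon)$ and $B(\epsilon)$, giving families (1) and (2) of the lemma.

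For the case $\delta \in T$: now every triple involving $\delta$ must lie in family (2) (two partners of the same type plus $\delta$) or family (4) (one up-curve, one down-curve, and $\delta$). So, writing $T' = T \setminus \{\delta\}$, every pair in $T'$ consisting of two same-type curves must be a partner-pair (from the $\delta$-containing triples via family (2)), while opposite-type pairs are unrestricted by $\delta$. Hence $T'$ can contain at most one partner-pair of up-curves and at most one partner-pair of down-curves, i.e. $T' \subseteq \{\alpha_{2i-1},\alpha_{2i},\beta_{2j-1},\beta_{2j}\}$ for some $i,j$. Then I would go back and check that such a $T'$ also satisfies all the \emph{non-$\delta$} triple conditions: triples like $\{\alpha_{2i-1},\alpha_{2i},\beta_{2j-1}\}$ are family (3), and $\{\alpha_{2i},\beta_{2j-1},\beta_{2j}\}$ is family (3) as well, so indeed $T = \{\alpha_{2i-1},\alpha_{2i},\beta_{2j-1},\beta_{2j},\delta\}$ is admissible whenever $\alpha_{2i}\in A(\epsilon)$ and $\beta_{2j}\in B(\epsilon)$, and it is maximal because no further up-curve can be added (it would create a non-partner up-pair with $\delta$ present, violating family (2)), and likewise no further down-curve. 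This yields family (3) of the statement, completing the classification.

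The main obstacle I anticipate is the maximality argument in the $\delta \notin T$ case: one must rule out that some admissible set strictly between a partner-pair-plus-a-curve and $A(\epsilon)$ (or $B(\epsilon)$) exists, and more importantly verify that the sets $\{\alpha_{2i-1},\alpha_{2i},\beta_k\}$ and their mirror images are \emph{not} maximal — so that they don't show up as spurious extra items in the list — which forces a careful check that such a set is always contained either in $A(\epsilon)$, in $B(\epsilon)$, or in one of the five-curve sets of family (3). This amounts to showing that any two opposite-type partner pairs together with $\delta$ is admissible, so that a set of the form $\{\text{one up partner-pair}\}\cup\{\text{one down curve}\}$ can always be enlarged; the hypothesis $|A(\epsilon)|,|B(\epsilon)|>1$ is exactly what guarantees a down partner-pair is available. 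Everything else is a finite verification against the four families of Lemma \ref{3cubes}.
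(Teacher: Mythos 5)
Your proposal is correct and follows essentially the same route as the paper: reduce to a purely combinatorial check via Corollary~\ref{3-to-n subset}, enumerate allowed triples via Lemma~\ref{3cubes}, and case-check maximality. The one minor imprecision is in the $\delta\notin T$ mixed-type case: any such admissible $T$ with curves of both types is contained in a set of the form $\{\alpha_{2i-1},\alpha_{2i},\beta_{2j-1},\beta_{2j}\}$ (up to four curves, not ``a partner-pair plus one curve''), and this in turn always enlarges to the corresponding $5$-curve set by adjoining $\delta$, so it is never maximal; you arrive at the right conclusion but wobble on the intermediate description. One should also note that the hypothesis $|A(\epsilon)|,|B(\epsilon)|>1$ (read as $|\epsilon^{-1}(\pm1)|>1$, guaranteeing at least two partner pairs on each side) is what prevents $A(\epsilon)$ or $B(\epsilon)$ from sitting inside a $5$-curve set and is also what supplies the non-partner pair needed to block adding $\delta$ or an opposite-type curve to $A(\epsilon)$; you use it but only for the existence of non-partner pairs, not for the former point.
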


\begin{proof}
{Using Corollary \ref{3-to-n subset}, in order to check whether a subset of curves from $\Gamma(\epsilon)$ forms an $n$-cube, it is enough to check whether every triple forms a $3$-cube. By Lemma \ref{3cubes} we have a complete list of such 3-cubes. 

The curves $A(\epsilon)$ and $B(\epsilon)$ form cubes of dimensions $2|\epsilon^{-1}(1)|$ and $2|\epsilon^{-1}(-1)|$, respectively, by Lemma \ref{3cubes} and Corollary \ref{3-to-n subset}. If one adds a down curve $\beta_{i}$ to $A(\epsilon)$, then a pair of up curves that are not partners will not form a $3$-cube with this down curve $\beta_{i}$, by Lemma \ref{3cubes}. If one adds $\delta$ to $A(\epsilon)$, then again a pair of up curves that are not partners will not form a $3$-cube with $\delta$. The analogous statements hold for $B(\epsilon)$. By Corollary \ref{3-to-n subset}, the cubes of dimension $2|\epsilon^{-1}(1)|$ and $2|\epsilon^{-1}(-1)|$ containing these sets of hyperplanes, respectively, must be maximal. The same analysis shows that a maximal cube containing $\delta$ must contain a pair of partner up curves and a pair of partner down curves.}
\end{proof}

The cases in which either of $|A(\epsilon)|$ or $|B(\epsilon)|$ are less than or equal to $1$ are quite similar, so we list the relevant information without proof.

\begin{lemma}
\label{max cubes 2}
{When $|B(\epsilon)|=0$ (resp.~$|A(\epsilon)|=0$), the sets of hyperplanes of maximal cubes of $C(\Gamma(\epsilon))$ correspond to one of the following lists of curves:

\begin{enumerate}
\item The $2g$ curves $A(\epsilon)$ (resp.~$B(\epsilon)$).
\item The 3 curves $\{\alpha_{2i-1},\alpha_{2i},\delta\}$ (resp.~$\{\beta_{2i-1},\beta_{2i},\delta\}$), for $i\in\{1,\ldots,g\}$.
\end{enumerate}
When $|B(\epsilon)|=1$ (resp.~$|A(\epsilon)|=1$), the sets of hyperplanes of maximal cubes of $C(\Gamma(\epsilon))$ correspond to one of the following lists of curves:

\begin{enumerate}
\item The $2g-2$ curves $A(\epsilon)$ (resp.~$B(\epsilon)$).
\item The 5 curves $\{\alpha_{2i-1},\alpha_{2i},\beta_{2j-1},\beta_{2j},\delta\}$, for $i,j\in\{1,\ldots,g\}$ such that $\alpha_{2i}\in A(\epsilon)$ and $\beta_{2j}\in B(\epsilon)$.\hfill \qedsymbol
\end{enumerate}}
\end{lemma}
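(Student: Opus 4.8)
The plan is to prove Lemma~\ref{max cubes 2} by reduction to the already-established Lemma~\ref{max cubes} and Lemma~\ref{3cubes}, since the combinatorial mechanics are genuinely the same; the only reason to state it separately is that the hypothesis $|A(\epsilon)|,|B(\epsilon)|>1$ in Lemma~\ref{max cubes} fails in these degenerate cases, so a few items on the list of maximal cubes either disappear or change shape. Concretely, I would first record once more the complete list of $3$-cubes from Lemma~\ref{3cubes}, and then observe that, by Corollary~\ref{3-to-n subset}, a subset $\Gamma''\subset\Gamma(\epsilon)$ forms an $n$-cube in $C(\Gamma(\epsilon))$ precisely when every triple in $\Gamma''$ appears on that list.

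For the case $|B(\epsilon)|=0$, note $\Gamma(\epsilon)=\{\delta\}\cup A(\epsilon)$ with $|A(\epsilon)|=2g$. By Lemma~\ref{3cubes}(1) every triple of up curves forms a $3$-cube, so $A(\epsilon)$ itself forms a $2g$-cube; adding $\delta$ fails because, by Lemma~\ref{3cubes}(2), $\{\alpha_i,\alpha_j,\delta\}$ is a $3$-cube only when $\alpha_i,\alpha_j$ are partners, and $g\ge 2$ forces a non-partner pair among the up curves. Hence the $2g$-cube on $A(\epsilon)$ is maximal. The only cubes containing $\delta$ therefore use at most one partner pair of up curves, and by Lemma~\ref{3cubes}(2) the triple $\{\alpha_{2i-1},\alpha_{2i},\delta\}$ is a $3$-cube, giving the second family; one checks via Lemma~\ref{3cubes} that no fourth curve can be added to such a triple (a second up curve outside the partner pair breaks (1)/(2), and there are no down curves), so these $3$-cubes are maximal. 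This is exactly the ``$|A(\epsilon)|=0$'' analysis in \S\ref{using dual cube complex section} run in the absence of down curves, and it is the same argument with $A$ and $B$ exchanged for the parenthetical statement.

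For $|B(\epsilon)|=1$ one has a single down curve, say $\beta_{2j}\in B(\epsilon)$, with its partner $\beta_{2j-1}\notin B(\epsilon)$, plus $\delta$ and the $2g-2$ up curves $A(\epsilon)$. As before $A(\epsilon)$ forms a $(2g-2)$-cube; adding either $\delta$ or $\beta_{2j}$ to it fails by Lemma~\ref{3cubes}(3)/(2) since a non-partner pair of up curves cannot coexist with a down curve or with $\delta$ in a $3$-cube (again $g\ge 2$ guarantees such a pair, using $|A(\epsilon)|=2g-2\ge 4$; the boundary subcase where $2g-2$ could be small is handled by $g$ odd and $\ge 3$ in the main construction, or is vacuous). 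So the $(2g-2)$-cube on $A(\epsilon)$ is maximal. Any maximal cube meeting $\{\delta,\beta_{2j}\}$ must, by Lemma~\ref{3cubes}, contain at most one partner pair of up curves; running the Lemma~\ref{max cubes} argument verbatim shows the largest such cube has hyperplanes $\{\alpha_{2i-1},\alpha_{2i},\beta_{2j-1},\beta_{2j},\delta\}$ for each partner pair $\alpha_{2i-1},\alpha_{2i}$ with $\alpha_{2i}\in A(\epsilon)$, using Lemma~\ref{3cubes}(2),(3),(4) to verify every one of its ten triples is a $3$-cube and that no further curve can be appended. The main obstacle is purely bookkeeping: checking that, in each degenerate case, one has not lost a maximal cube that was being ``absorbed'' into a larger one in the generic case, and confirming that the small-$g$ corners of the construction do not produce spurious exceptions; since $g\ge 2$ (indeed $g$ odd $\ge 3$ in the core construction) there is always a non-partner pair among $2g-2$ or $2g$ up curves, so no such exception arises, and the lemma follows.
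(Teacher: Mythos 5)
Your overall strategy is exactly the one the paper intends: the paper omits the proof of Lemma~\ref{max cubes 2} with the remark that it is ``quite similar'' to Lemma~\ref{max cubes}, and your reduction to Lemma~\ref{3cubes} via Corollary~\ref{3-to-n subset} is precisely that argument. Your treatment of the $|B(\epsilon)|=0$ case is correct.

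However, there is an internal inconsistency in your $|B(\epsilon)|=1$ case. You interpret $|B(\epsilon)|=1$ literally as ``one has a single down curve, say $\beta_{2j}\in B(\epsilon)$, with its partner $\beta_{2j-1}\notin B(\epsilon)$,'' but this cannot happen: by construction $B(\epsilon)=\{\beta_{2i-1},\beta_{2i}\mid\epsilon_i=-1\}$, so $B(\epsilon)$ always has even cardinality and contains partner pairs of down curves or neither member of a pair. The condition in the lemma is (sloppily) shorthand for $|\epsilon^{-1}(-1)|=1$, equivalently $|B(\epsilon)|=2$ in the honest sense, so that $B(\epsilon)=\{\beta_{2j-1},\beta_{2j}\}$ and $|A(\epsilon)|=2g-2$. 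Under your stated reading, $\beta_{2j-1}\notin\Gamma(\epsilon)$, so $\beta_{2j-1}$ could not be a hyperplane at all; and then the maximal cube through $\{\delta,\beta_{2j}\}$ would be the $4$-cube $\{\alpha_{2i-1},\alpha_{2i},\beta_{2j},\delta\}$ rather than the $5$-cube you (correctly) list. You reach the right conclusion but by contradicting your own setup. The repair is small: note that $B(\epsilon)$ consists of the full partner pair $\{\beta_{2j-1},\beta_{2j}\}$, so the $5$-cube $\{\alpha_{2i-1},\alpha_{2i},\beta_{2j-1},\beta_{2j},\delta\}$ exists in $C(\Gamma(\epsilon))$, and the $3$-cube $\{\delta,\beta_{2j-1},\beta_{2j}\}$ (hence the would-be maximal cube on $B(\epsilon)$) is absorbed into it --- which is exactly why $B(\epsilon)$ does not appear as a maximal cube in this degenerate case, in contrast to Lemma~\ref{max cubes}(2).

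Finally, for completeness, the hypothesis $|A(\epsilon)|,|B(\epsilon)|>1$ of Lemma~\ref{max cubes} should likewise be read as $|\epsilon^{-1}(\pm1)|>1$ (at least two partner pairs of each type), for otherwise the $2$-cube on $A(\epsilon)$ or $B(\epsilon)$ would itself fail to be maximal by the same absorption. Keeping this bookkeeping consistent is the ``main obstacle'' you flag, and with it in place the rest of your argument goes through.
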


\begin{proof}[Proof of Proposition \ref{MCG-orbit restrictions}]
{The simple observation we exploit is that cube complex isomorphisms must send maximal cubes to maximal cubes.

Suppose $|A(\epsilon)|,|B(\epsilon)|>1$. By Lemma \ref{max cubes} and Lemma \ref{max cubes 2}, the maximal cubes that the hyperplane corresponding to $\delta$ passes through are all $5$-dimensional, while the maximal cubes that the hyperplane corresponding to $\alpha_{i}$ (resp.~$\beta_{i}$) passes through, for any $i\in\{1,\ldots,2g\}$, include one of the two even-dimensional maximal cubes corresponding to $A(\epsilon)$ and $B(\epsilon)$. Thus any isomorphism of cube complexes $\Phi:C(\Gamma(\epsilon))\cong C(\Gamma(\epsilon'))$ must take the hyperplane corresponding to $\delta$ to itself. By Theorem \ref{CubeChar}, the corresponding mapping class $\phi$ fixes $\delta$. Similarly, the even-dimensional maximal cubes whose hyperplanes correspond to $A(\epsilon)$ and $B(\epsilon)$ must be sent to the pair of even-dimensional maximal cubes whose hyperplanes correspond to $A(\epsilon')$ and $B(\epsilon')$. The remaining cases are similar.

Finally, a pair of partner curves are simultaneously up or down. By Lemma \ref{3cubes}, they form a 3-cube with $\delta$ while a pair of non-partner curves that are simultaneously up or down do not. It follows that $\phi$ sends partner curves to partner curves.}\end{proof}

While we may conclude that the pair of numbers $|A(\epsilon)|$ and $|B(\epsilon)|$ is equal to $|A(\epsilon')|$ and $|B(\epsilon')|$ if $\Gamma(\epsilon)$ and $\Gamma(\epsilon')$ are $\mbox{Mod}^{*}(S)$-equivalent, we are not yet able to prove Proposition \ref{distinguishing orbits}. We turn to finer invariants of $\Gamma(\epsilon)$.

\section{The labeled polygon $P(\epsilon)$ associated to $\Gamma(\epsilon)$}
\label{polygon section}

Towards the proof of Proposition \ref{distinguishing orbits}, we introduce a more detailed invariant. The information inherent to the invariant we produce is easily packaged as a polygon. The essential tool to building this invariant is the ordering induced on intersection points of a realization of an oriented curve. 

However, while an oriented curve in a realization of a curve system determines an ordering of its intersection points, this ordering may not be an invariant of the collection of homotopy classes; the presence of 3-cubes implies the existence of a Reidemeister move that will make it possible to switch the ordering of intersection points. We state the following only for complete 1-systems for ease in exposition, but with slightly more detail a more general statement could be made. 

\begin{lemma}
\label{ordering sans 3cubes}
{Let $\{\gamma_1,\ldots,\gamma_k,\gamma,\gamma'\}$ be a complete 1-system of curves, and let $\vv{\gamma}$ be a choice of orientation of $\gamma$. Suppose  that $\gamma$ does not form 3-cubes with any pair $\gamma_i$ and $\gamma_j$. Then the cyclic ordering of $\gamma_1,\ldots,\gamma_k$ induced by $\vv{\gamma}$ is invariant of the choice of realization. In this setting, the choice of $\gamma_j$ induces a well-defined ordering of $\{\gamma_i : i\ne j\}$. Moreover, if $\gamma$ and $\gamma'$ form 3-cubes with $\gamma_i$, for all $i$, then the cyclic orderings of $\gamma_1,\ldots,\gamma_k$ induced by the two orientations of $\gamma'$ coincide with those of the two orientations of $\gamma$.}
\end{lemma}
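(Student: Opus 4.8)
The plan is to prove the three assertions in sequence, each building on the preceding one. For the first assertion, I would invoke the theorem of de Graaf--Schrijver (already used in Lemma \ref{triangles}): any two realizations of a curve system are related by a homotopy through realizations and finitely many Reidemeister type III moves. A Reidemeister III move is performed inside a triangular complement region bounded by arcs of three of the curves, and it is precisely such a move that can transpose the cyclic order in which two curves $\gamma_i,\gamma_j$ are met along $\vv{\gamma}$. By Lemma \ref{tool1} the presence of such a triangle involving $\gamma$, $\gamma_i$, and $\gamma_j$ is equivalent to $\gamma$ forming a $3$-cube with the pair $\gamma_i,\gamma_j$; so the hypothesis that $\gamma$ forms no such $3$-cube rules out exactly the moves that could alter the cyclic order induced by $\vv{\gamma}$ on $\{\gamma_1,\dots,\gamma_k\}$. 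I should be slightly careful here: I want to argue that a Reidemeister III move involving $\gamma$ and only \emph{one} of the $\gamma_i$ (the third arc coming from some $\gamma_\ell$ with $\ell\neq i$, or even from $\gamma$ or $\gamma'$ itself) does not change the cyclic order of the $\gamma_i$ along $\gamma$, since $\gamma$ still crosses each $\gamma_i$ exactly once (the system is a complete $1$-system) and the move only permutes arcs locally without changing which point of $\gamma$ is met; and a move not involving $\gamma$ at all is visibly irrelevant to the order along $\gamma$. So the only potentially dangerous moves are those involving $\gamma$ together with two of the $\gamma_i$, which the hypothesis forbids.

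The second assertion is then immediate bookkeeping: a cyclic order on the $k$ curves $\gamma_1,\dots,\gamma_k$ together with a distinguished element $\gamma_j$ determines a genuine linear order on the remaining $k-1$ curves, by cutting the cyclic order at $\gamma_j$ and reading off $\{\gamma_i : i\neq j\}$ starting just after $\gamma_j$ and proceeding in the direction of $\vv{\gamma}$. (One can phrase this as: $\gamma_j$ corresponds to one of the $2k$ intersection points of $\vv\gamma$ with the other curves — here we use completeness so each $\gamma_i$ contributes exactly one point — and reading the remaining points in cyclic order starting from $\gamma_j$ gives the ordering.)

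For the third assertion, the content is that reversing the orientation of $\gamma$ reverses the cyclic order, and that the cyclic order induced by $\gamma'$ is a reversal of, or equal to, the one induced by $\gamma$. The first half is trivial. For the comparison of $\gamma$ with $\gamma'$: since $\gamma$ and $\gamma'$ form $3$-cubes with every $\gamma_i$, by Proposition \ref{tool2claim} (or directly by Theorem \ref{3-to-n} / Corollary \ref{3-to-n subset} applied to the subsystem $\{\gamma_1,\dots,\gamma_k,\gamma\}$ and to $\{\gamma_1,\dots,\gamma_k,\gamma'\}$) there is an almost-realization in which $\gamma$ passes through a single common intersection point $p$ of all the $\gamma_i$, and likewise one for $\gamma'$. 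In such a picture the cyclic order in which $\vv\gamma$ meets the $\gamma_i$ is just the cyclic order in which the arcs $\gamma_i$ emanate from $p$ (read off by the orientation of $\vv\gamma$); the same is true for $\gamma'$. Since $\gamma$ and $\gamma'$ each cross this common point transversally, the two germs at $p$ of $\gamma$ and of $\gamma'$ together with the $k$ germs of the $\gamma_i$ sit in a common disk, and the cyclic order of the $\gamma_i$-germs around $p$ is a single combinatorial datum independent of which transversal we use to read it — up to the overall orientation reversal caused by which side of the transversal we start on. Hence the two cyclic orders (one from each orientation of $\gamma'$) together coincide with the pair of cyclic orders coming from the two orientations of $\gamma$. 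To make this rigorous I would push $\gamma$ and $\gamma'$ simultaneously through the common point $p$ — which is legitimate because, the whole collection being a complete $1$-system in which every relevant triple forms a triangle, Lemma \ref{homotopies to only one} and the inductive weaving argument of Proposition \ref{tool2claim} apply to the enlarged system $\{\gamma_1,\dots,\gamma_k,\gamma,\gamma'\}$ as well (one first arranges the $\gamma_i$ to meet at $p_0$, then weaves in $\gamma$, then weaves in $\gamma'$), obtaining an almost-realization with all $k+2$ curves through one point.

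\textbf{Main obstacle.} The delicate point is the third assertion, specifically justifying that one may simultaneously route both $\gamma$ and $\gamma'$ through the common intersection point of the $\gamma_i$ while staying in (almost-)realizations — i.e.\ verifying that the hypotheses of Proposition \ref{tool2claim} are met by the full system $\{\gamma_1,\dots,\gamma_k,\gamma,\gamma'\}$, or else giving a direct argument comparing the two orderings without producing a common-point model. A cleaner route, if the full common-point model is awkward, is to argue locally: fix an almost-realization with the $\gamma_i$ through $p_0$ and $\gamma$ through $p_0$ as well, and then show that $\gamma'$, however it is (minimally) positioned, meets the $\gamma_i$ near $p_0$ in a pattern forcing its induced cyclic order to match one of the two orientations of $\gamma$ — using that $\gamma'$ forms a triangle with $\gamma$ and each $\gamma_i$ to pin down the local picture. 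Either way, the essential new input beyond what is already proved is only the extension of the weaving argument to two extra curves, and I expect no genuinely new difficulty there, just care.
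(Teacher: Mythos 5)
Your handling of the first two assertions is essentially the paper's argument and is fine: any two realizations differ by Reidemeister type III moves (the paper cites Goldman's Lemma 5.6, but de Graaf--Schrijver as in Lemma \ref{triangles} serves equally well, since realizations already have no monogons or bigons), and the only move that could transpose two of the $\gamma_i$ along $\gamma$ is one whose triangle has arcs on $\gamma$ and on \emph{two} of the $\gamma_i$, which by Lemmas \ref{triangles} and \ref{tool1} would produce a forbidden $3$-cube. (Minor slip: in your parenthetical, a move whose third arc lies on some $\gamma_\ell$ with $\ell\ne i$ is exactly this dangerous kind, not a harmless one; your closing sentence classifies it correctly, and the case is vacuous under the hypothesis, so nothing is lost.)

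The third assertion is where the proposal has a genuine gap. You propose to apply Proposition \ref{tool2claim} (or Corollary \ref{3-to-n subset}) to $\{\gamma_1,\ldots,\gamma_k,\gamma\}$, or to the full system, to get an almost-realization in which $\gamma$ (and then $\gamma'$) passes through a single common intersection point of the $\gamma_i$. But Proposition \ref{tool2claim} requires \emph{every} trio to form a triangle, and the standing hypothesis of the lemma is precisely the opposite for the trios $\{\gamma,\gamma_i,\gamma_j\}$: $\gamma$ forms no $3$-cube with any pair $\gamma_i,\gamma_j$. Worse, the configuration you aim for cannot exist: if $\gamma$, $\gamma_i$, $\gamma_j$ passed through a common point in an almost-realization, then, exactly as in the proof of Theorem \ref{3-to-n}, the corresponding lifts would pairwise intersect and $\{\gamma,\gamma_i,\gamma_j\}$ would form a $3$-cube, contradicting the hypothesis. (Note also that the lemma makes no assumption that trios among the $\gamma_i$ themselves form triangles, so even the common point of the $\gamma_i$ is not available in the stated generality; in the paper's application $\gamma,\gamma'$ are partner up curves and the $\gamma_i$ are down curves, which meet at a point $\gamma$ does not pass through.) Your fallback "cleaner route" still posits an almost-realization with $\gamma$ through the common point of the $\gamma_i$, so it inherits the same contradiction. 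The paper's proof of the last claim needs no common-point model: it argues by contradiction that if the cyclic orders induced by (suitably oriented) $\gamma$ and $\gamma'$ disagreed on some pair $\{\gamma_i,\gamma_j\}$, then the triangles coming from the $3$-cubes $\{\gamma,\gamma',\gamma_i\}$ and $\{\gamma,\gamma',\gamma_j\}$ would force a triangle bounded by arcs of $\gamma$, $\gamma_i$, $\gamma_j$, i.e.\ a forbidden $3$-cube; to repair your proposal you should replace the weaving construction by a local argument of this kind.
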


\begin{proof}
{Choose a realization of the curve system $\{\gamma_1,\ldots,\gamma_k,\gamma,\gamma'\}$. One obtains a cyclic ordering of $\{\gamma_1,\ldots,\gamma_k\}$ induced by $\vv{\gamma}$. This ordering is invariant of the chosen realization, since any realization can be obtained from any other realization by applying a sequence of Reidemeister moves, and births or deaths of monogons or bigons \cite[Lemma 5.6]{goldman2}. 

Finally, if $\gamma$ and $\gamma'$ form 3-cubes with each of $\gamma_i$ and $\gamma_j$, but the orderings determined by the orientations of $\gamma$ and $\gamma'$ differ on the pair $\{\gamma_i,\gamma_j\}$, then it follows that there would be a triangle formed by $\gamma$, $\gamma_i$, and $\gamma_j$.}
\end{proof}

We refer to maximal sets of non-partner up (resp.~down) curves as \emph{full}, and we fix choices of full sets of up and down curves $U$ and $D$, respectively. In our setting, by Lemma \ref{3cubes} the curve $\alpha_i$ does not form 3-cubes with any pair of non-partner down curves. Thus, choosing an orientation $\vv{\alpha_i}$, we may apply Lemma \ref{ordering sans 3cubes} to $\vv{\alpha_i}$ with $D$. We conclude that a choice of orientation $\vv{\alpha_i}$ induces a cyclic order to any full set of down curves.

Moreover, as long as $|A(\epsilon)|>2$, the cyclic order of $D$ induced by $\vv{\alpha_i}$ can be upgraded, canonically, into a bona fide ordering: If $|A(\epsilon)|>2$, there is an up curve $\alpha_j$ which is not a partner of $\alpha_i$. By Lemma \ref{3cubes}, the curve $\alpha_i$ does not form a 3-cube with $\alpha_j$ and $\beta_l$, for any $l$, so that Lemma \ref{ordering sans 3cubes} applies to $\vv{\alpha_i}$ and the union of $D$ with $\alpha_j$. Moreover, it is evident that this order does not depend on the choice of $\alpha_j$ among up curves that are not partners of $\alpha_i$. We conclude that a choice of orientation $\vv{\alpha_i}$ induces an ordering of $D$, which we refer to as the $\vv{\alpha_i}$-ordering of $D$.

We choose an almost realization of $\Gamma(\epsilon)$ such that the curves in $U$ intersect at a single point $p$, and curves in $D$ intersect at a single point $p'$. Let $c$ be a small circle centered at $p$, and $c'$ a small circle centered at $p'$. An orientation of $c$ (resp. $c'$) induces a cyclic ordering on the finite set of points $U \cap c$ (resp. $D \cap c')$. Assuming that both $U$ and $D$ are non-empty, using the orientation of the surface $S$, we equip $c$ with a clockwise orientation, and $c'$ with a counter-clockwise orientation. This induces a cyclic ordering on the finite collection of points $U \cap c$ and on $D \cap c'$.

\begin{remark} We note the cyclic ordering on $U \cap c$ (and on $D \cap c'$) described above is an invariant of the $\mbox{Mod}^*(S)$-orbit of $\Gamma(\epsilon)$. This follows from the fact that it corresponds to the cyclic order on the set of endpoints of lifts to $\mathbb{H}^{2}$ of geodesics in $U$ on some hyperbolic surface, induced from an orientation of $\partial \mathbb{H}^{2}$, and therefore this ordering is detected by the dual cube complex to $\Gamma(\epsilon)$. Thus, the definition of this cyclic ordering on $U \cap c$ does not depend on our particular choice of almost realization for $\Gamma(\epsilon)$. 
\end{remark}

Given a choice of orientation of a curve $\gamma\in U$ (resp.~$D$), the two points of $\gamma\cap c$ (resp.~$\gamma\cap c'$) are each equipped with an arrow that either points away from, or towards $p$ (resp.~$p'$). We refer to arrows pointing towards $p$ (resp.~$p'$) as \emph{inward pointing} and the others as \emph{outward pointing}. Given a set of choices of orientations for each of the curves in $U$, the set of inward and outward pointing arrows partitions the set of points $U\cap c$ into two sets. Moreover, there is an involution $\iota$ of $U \cap c$ that exchanges these two sets, for any set of choices of orientations for the curves in $U$: Given $v\in U \cap c$, $\iota(v)$ is the only other point of $U \cap c$ on the same curve of $\Gamma(\epsilon)$ as $v$.

\begin{lemma} \label{Cohere} There exists a choice of orientations $\{\vv{\alpha_i}\}$ for the curves in $U$ such that the $\vv{\alpha_i}$-ordering and the $\vv{\alpha_j}$-ordering on $D$ are cyclically equivalent. 
\end{lemma}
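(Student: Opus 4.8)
The plan is to think of an orientation $\vv{\alpha_i}$ of an up curve as determining, for each other non-partner up curve $\alpha_j$, a \emph{sign} $s_{ij} \in \{+,-\}$ recording whether the $\vv{\alpha_i}$- and $\vv{\alpha_j}$-orderings on a full set $D$ of down curves agree or are reversed; one then wants to choose the orientations so that all these signs are $+$. First I would establish that $s_{ij}$ is well-defined — independent of the full set $D$ chosen and of the auxiliary curve $\alpha_k$ used in Lemma \ref{ordering sans 3cubes} to upgrade the cyclic order to a genuine order — which follows from the invariance clauses already proved in Lemma \ref{ordering sans 3cubes} together with the fact (Lemma \ref{3cubes}) that an up curve forms no $3$-cube with a pair of non-partner down curves. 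Next I would record the key cocycle-type relation: for three pairwise non-partner up curves $\alpha_i,\alpha_j,\alpha_k$, the composite $s_{ij}s_{jk}s_{ki} = +$, because comparing the $\vv{\alpha_i}$- and $\vv{\alpha_k}$-orderings directly must agree with comparing them each to the $\vv{\alpha_j}$-ordering — this is exactly the transitivity of "same orientation class" and uses that all three pairwise orderings live on the same full set $D$. (Partners need separate, easy handling: reversing $\vv{\alpha_{2i-1}}$ versus $\vv{\alpha_{2i}}$ can be analyzed directly from the local picture at $p$, since partners differ by a Dehn twist along $r_i$, so one checks their induced orderings on $D$ relate by a fixed sign that we absorb into the orientation choice.)

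With the relation $s_{ij}s_{jk}s_{ki}=+$ in hand, the statement becomes a purely combinatorial observation: define a graph on the (non-partner representatives of) up curves, and use the cocycle condition to choose a function $\varepsilon_i \in \{+,-\}$ with $\varepsilon_i \varepsilon_j = s_{ij}$ for all $i,j$; then replacing $\vv{\alpha_i}$ by its reverse exactly when $\varepsilon_i = -$ makes every resulting sign $+$, which is precisely the assertion that all the $\vv{\alpha_i}$-orderings on $D$ become cyclically equivalent. Concretely: fix one up curve $\alpha_1$, set $\varepsilon_1 = +$, and set $\varepsilon_i = s_{1i}$ for $i \ne 1$; then $\varepsilon_i \varepsilon_j = s_{1i}s_{1j} = s_{ij}$ by the cocycle relation (rewriting $s_{1i}s_{ij}s_{j1} = +$). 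Finally I would extend the conclusion from "cyclically equivalent as cyclic orders" to the full ordering version quietly, noting that once the cyclic orders agree and each is upgraded to an order using a non-partner up curve in the consistent way, the orders agree too.

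The main obstacle I anticipate is making the sign $s_{ij}$ genuinely well-defined and proving the cocycle relation rigorously, rather than the subsequent combinatorics (which is routine). The subtlety is that the "ordering of $D$ induced by $\vv{\alpha_i}$" is only canonical after a choice of auxiliary non-partner up curve and a choice of full set $D$, and to compare the $\vv{\alpha_i}$- and $\vv{\alpha_j}$-orderings one must be sure they are being read off the \emph{same} configuration of down curves; the cleanest way is to pass to an almost-realization in which $U$ meets in a single point $p$ and $D$ in a single point $p'$ (as set up just before the lemma), read all orderings off the little circles $c$ and $c'$ with their fixed orientations coming from the orientation of $S$, and observe that an orientation of $\alpha_i$ selects which of the two arcs of $\alpha_i$ near $p'$ is "incoming" — hence determines a reading direction of $D \cap c'$ — and that two such readings agree or reverse consistently across all of $D$ precisely because no $\alpha_i$ forms a $3$-cube with a pair of non-partner down curves (so no Reidemeister-III move can scramble the order). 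The remark preceding the lemma, identifying these cyclic orders with cyclic orders of endpoints of geodesic lifts in $\Hb^2$, gives the cleanest conceptual justification of well-definedness, and I would lean on it.
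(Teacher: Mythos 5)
Your plan takes a genuinely different route from the paper's. The paper's proof is a one-line direct construction: it exhibits concrete orientations (the ones shown in Figure~\ref{orderingPic}, where $D$ is partitioned into $D_1,\ldots,D_4$ by the two oriented up curves) and observes by inspection that the induced orderings of $D$ agree cyclically. You instead try to deduce existence abstractly via a $\Z/2$-cocycle argument. The abstraction would be fine, except that the cocycle step is nearly vacuous once your key claim is granted: if the $\vv{\alpha_i}$- and $\vv{\alpha_j}$-orderings of $D$ are always cyclically equal or cyclically reversed, then the set of orderings arising from oriented up curves lies in a single pair $\{O, O^{\mathrm{op}}\}$ of cyclic classes, the relation $s_{ij}s_{jk}s_{ki}=+$ is the trivial transitivity of "lands in the same class," and one chooses orientations by pinning each curve to $O$. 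So all the actual content of the lemma is concentrated in the claim that $s_{ij}$ is binary, and this claim is exactly what you do not prove.

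This is a real gap, not a routine detail. The claim does not follow from the "Moreover" clause of Lemma~\ref{ordering sans 3cubes}: that clause compares orderings induced by two curves $\gamma,\gamma'$ only under the hypothesis that $\{\gamma,\gamma',\gamma_i\}$ forms a $3$-cube for every $i$, and by Lemma~\ref{3cubes} the triple $\{\alpha_i,\alpha_j,\beta_l\}$ forms a $3$-cube only when $\alpha_i,\alpha_j$ are partners -- so it gives you nothing for the non-partner up curves you need. Nor does the remark before the lemma supply it directly: that remark concerns the cyclic order on $D\cap c'$ around $c'$, which is not the same datum as the $\vv{\alpha_i}$-ordering of $D$ (the order of crossing points along $\alpha_i$); identifying the two requires the very geometric argument you are deferring. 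Your sketch of passing to the concurrent almost-realization, reading off $c'$, and ruling out "scrambling" via the absence of Reidemeister-III moves gestures in the right direction, but the absence of triangles only shows each $\vv{\alpha_i}$-ordering is individually well-defined (independent of realization), not that two different $\alpha_i,\alpha_j$ produce cyclically equal-or-reversed orderings. To close this you would essentially have to examine the concrete curves and their arrangement near $p'$, which is precisely what the paper does, more economically, by drawing the picture and choosing the standard orientations outright.
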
 
\begin{proof} Choose orientations for $\alpha_i$ and $\alpha_j$ as in Figure \ref{orderingPic}. The down curves $D$ are partitioned by this choice into four sets $D_1,D_2,D_3,D_4$, and with the chosen orientations on $\alpha_i$ and $\alpha_j$, the $\vv{\alpha_i}$- and the $\vv{\alpha_j}$-orderings of $D$ are cyclically equivalent.
\end{proof}

 \begin{figure}[]
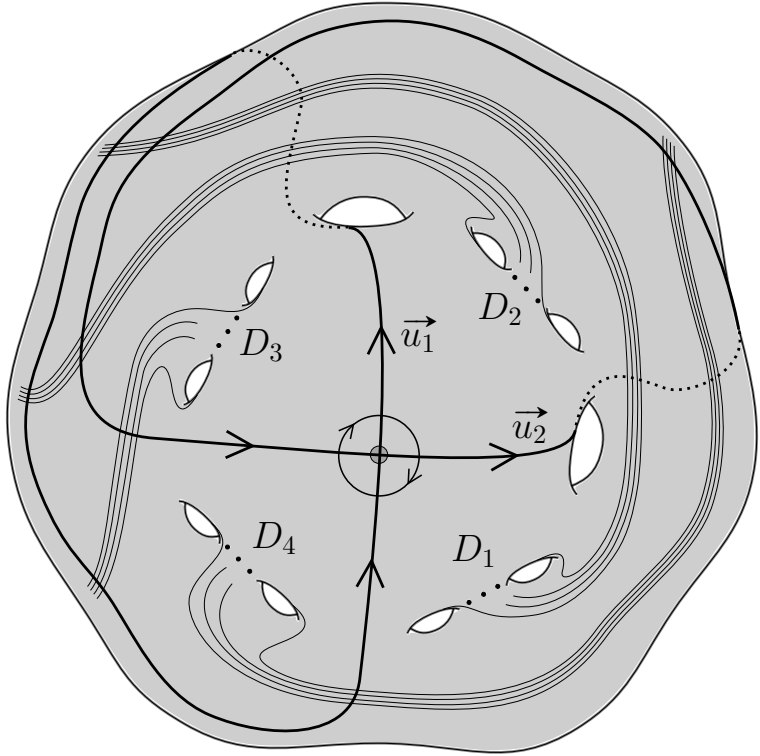

 	\centering
	\Large
 	\begin{lpic}{prop75drawing(10cm)}
		\lbl[]{126,128;$\vv{u_1}$}
		\lbl[]{160,100;$\vv{u_2}$}
		\lbl[]{151,136;$D_2$}
		\lbl[]{78,125;$D_3$}
		\lbl[]{82,66;$D_4$}
		\lbl[]{143,62;$D_1$}
	\end{lpic}
	\caption{The oriented up curve $u_1$ induces the ordering $(D_1,D_2,D_3,D_4)$ of the down curves, while $u_2$ induces $(D_4,D_1,D_2,D_3)$.}
	\label{orderingPic}
\end{figure}

A set of choices of orientations of the curves in $U$ is \textit{coherent} if it satisfies the conclusion of Lemma \ref{Cohere}. The orientations chosen in Lemma \ref{Cohere} (see Figure \ref{orderingPic}) are a convenient choice, and we refer to these choices of orientations of up curves as the \emph{standard orientations} for curves in $U$. It will be useful to have a chosen orientation of the down curves as well. Note the orientation-reversing involution of the surface that exchanges each up curve $\alpha_i$ with the down curve $\beta_i$. An orientation of a down curve is \emph{standard} if the image under this involution is a standardly oriented up curve. 

\begin{remark} \label{CoherentUnique} We note that there are exactly two coherent orientations on $U$; this follows from the fact that a coherent orientation is completely determined by choosing the orientation on one curve of $U$. We will refer to the coherent set of choices of orientations for $U$ that is not the standard one as \emph{non-standard}. 
\end{remark}

\begin{definition} The \textit{labeled polygon} $P(\epsilon)$ associated to $\Gamma(\epsilon)$ is a $2|U|$-gon, satisfying:

\begin{enumerate}
\item The vertices are labeled from elements of $U \cap c$, in the cyclic ordering induced by the orientation of $c$; thus $P(\epsilon)$ comes equipped with a preferred orientation of its boundary. By placing an outward pointing arrow at each point of $U\cap c$, the vertices of $P(\epsilon)$ determine orderings of $D$. By Remark \ref{CoherentUnique}, these orderings partition the vertices into two sets $P_1$ and $P_2$, exchanged by $\iota$, that correspond to the two cyclic equivalence classes of orderings of $D$ determined by possible coherent choices of orientations for $U$. For each $l=1,2$, we let $R_l(\epsilon)$ denote the polygon formed by the cyclically ordered vertices in $P_l$. Note that the edges of $R_l(\epsilon)$ are diagonals of $P(\epsilon)$;

\item Each edge of $R_l(\epsilon)$ is decorated with a pair of integers $\mathcal{M}_l(e)$and $\mathcal{N}_l(e)$, defined as follows: 
Let $e= (\vv{\alpha_{i}}, \vv{\alpha_{j}})$ be an edge of $R_l(\epsilon)$ with initial and terminal vertices corresponding to the oriented curves $\vv{\alpha_{i}}$ and $\vv{\alpha_{j}}$ in $U$, respectively. We set $\mathcal{M}_l(e)$ equal to the number of vertices of $P(\epsilon)$ between $\vv{\alpha_{i}}$ and $\vv{\alpha_{j}}$. If $\beta_{l_{1}},\ldots,\beta_{l_{m}}$ denotes the $\vv{\alpha_{i}}$-ordering of $D$, then by construction there exists some $r$ such that the $\vv{\alpha_{j}}$-ordering on $D$ is $$\beta_{l_{m-r+1}}, \ldots, \beta_{l_m},\beta_{l_{1}},\ldots,\beta_{l_{m-r}}.$$ 
We define $\mathcal{N}_l(e)= r$. 
\end{enumerate}
\end{definition}

\begin{lemma} \label{WellDef} $P(\epsilon)$ is a well-defined invariant of $\Gamma(\epsilon)$. 
\end{lemma}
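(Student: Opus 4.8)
The plan is to show that every choice made in the construction of $P(\epsilon)$ is either canonical or accounted for by the symmetry of the labeled polygon, so that the isomorphism type of $P(\epsilon)$ (as a cyclically-labeled polygon with decorated diagonals, considered up to the dihedral symmetry coming from reversing orientations) depends only on the $\mbox{Mod}^*(S)$-orbit of $\Gamma(\epsilon)$. First I would isolate the list of choices: (a) the choice of full sets $U$ and $D$ of non-partner up and down curves; (b) the choice of almost-realization with $U$ meeting at a single point $p$ and $D$ at a single point $p'$; (c) the choice of coherent orientations for $U$ (there are exactly two, by Remark \ref{CoherentUnique}); and (d) the implicit identification of vertices of $P(\epsilon)$ with points of $U\cap c$. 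I would then argue each of these in turn.

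For the cyclic ordering of the vertices (item (1) of the definition), invariance is exactly the content of the Remark following Lemma \ref{Cohere}: the cyclic order on $U\cap c$ is the cyclic order of endpoints on $\partial\mathbb H^2$ of lifts of the geodesic representatives, hence is detected by the dual cube complex and therefore depends only on the $\mbox{Mod}^*(S)$-orbit; in particular it is independent of the almost-realization, dealing with (b) and (d). For the decorations $\mathcal M_l(e)$: this is just a count of vertices of $P(\epsilon)$ between two vertices in the (invariant) cyclic order, so it is automatically well-defined once the vertex cyclic order is. For $\mathcal N_l(e)$: I would note that Lemma \ref{ordering sans 3cubes} (applied via Lemma \ref{3cubes}, since up curves form no $3$-cube with a non-partner up curve and a down curve, once $|A(\epsilon)|>2$) guarantees that the $\vv{\alpha_i}$-ordering of $D$ — and of $D$ together with a non-partner up curve, which promotes it to a genuine linear order — is itself realization-independent; the cyclic-shift parameter $r$ comparing the $\vv{\alpha_i}$- and $\vv{\alpha_j}$-orderings is then a function of this invariant data, hence well-defined. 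Here I would also record that the shift $r$ is the same whether computed via the $D$-ordering or via the $(D\cup\{\alpha_k\})$-ordering, since the up curves appear in their own invariant order.

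Next I would address the remaining choices. For (c), Remark \ref{CoherentUnique} says there are exactly two coherent orientation systems on $U$, swapped by global orientation reversal on all up curves; this swap is precisely the involution $\iota$ on vertices, so it interchanges $P_1\leftrightarrow P_2$ and hence $R_1(\epsilon)\leftrightarrow R_2(\epsilon)$ while preserving the pair $\{R_1,R_2\}$ and the decorations (reversing an orientation sends $\mathcal N_l(e)$ for an edge to $m-r$ for the reversed edge, which is exactly the combinatorial relation already built into the definition via the two subsets $P_1,P_2$). So $P(\epsilon)$, understood as the polygon $P$ together with the \emph{unordered} pair $\{R_1(\epsilon),R_2(\epsilon)\}$ of decorated sub-polygons, does not depend on the coherent choice. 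For (a), I would argue that any two full sets of up curves differ by removing one curve of a partner pair and adding its partner; since partner curves are isotopic after a Dehn twist around the relevant $r_i$ and, more to the point, the dual cube complex records all $2g$ up curves and their partner relation (Lemma \ref{3cubes}, item (2)), the combinatorics of $U\cap c$ for one full set determines it for any other — alternatively, one simply observes that $P(\epsilon)$ is built from data visible in $C(\Gamma(\epsilon))$ and invokes Theorem \ref{CubeChar}. Finally I would assemble: by Theorem \ref{CubeChar}, an element of $\mbox{Mod}^*(S)$ carrying $\Gamma(\epsilon)$ to $\Gamma(\epsilon')$ induces a cube-complex isomorphism, hence (since all the ingredients of $P$ are detected by $C(\Gamma(\epsilon))$, as catalogued above) an isomorphism of labeled polygons $P(\epsilon)\cong P(\epsilon')$; conversely the construction never used anything but $\Gamma(\epsilon)$ up to homeomorphism.

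The main obstacle I expect is the bookkeeping around the two ``orientation-reversal'' symmetries and making sure the definition of $P(\epsilon)$ genuinely quotients by them: one must check that $\mathcal M_l,\mathcal N_l$ transform correctly under $\iota$ (and under reversing the boundary orientation of $P(\epsilon)$, which corresponds to the orientation-reversing symmetry of $S$ exchanging up and down curves), so that no spurious extra data sneaks in. A secondary subtlety is the genus boundary case $|A(\epsilon)|\le 2$ (where the linear ordering of $D$ cannot be pinned down canonically and $P(\epsilon)$ degenerates), which I would handle either by noting $P(\epsilon)$ is only needed — and only claimed well-defined — when $|A(\epsilon)|,|B(\epsilon)|$ are large enough, or by treating the small cases separately as already done in Proposition \ref{MCG-orbit restrictions} via the numbers $|A(\epsilon)|,|B(\epsilon)|$. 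Once those transformation rules are nailed down, well-definedness is a direct consequence of the invariance of the cyclic order on $U\cap c$ plus Lemma \ref{ordering sans 3cubes}.
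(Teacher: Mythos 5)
Your proof is correct and follows essentially the same route as the paper: the paper's (much terser) argument likewise rests on Remark \ref{CoherentUnique} to pin down the $P_1/P_2$ partition, on the previously established invariance of the induced orderings (Lemma \ref{ordering sans 3cubes} and the remark on the cyclic order of $U\cap c$), and on invariance of the labels under the choice of full sets $U$ and $D$. Your additional bookkeeping (transformation under $\iota$, the small $|A(\epsilon)|$ cases) is a careful elaboration of what the paper leaves implicit, not a different approach.
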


\begin{proof} By Remark \ref{CoherentUnique}, the standard and non-standard pair of choices of orientations for the curves in $U$ are the only coherent such choices. Thus the partition of the vertices of $P(\epsilon)$ into the two sets $P_1$ and $P_2$ is well-defined. The labels $\mathcal{M}_l(e)$ and $\mathcal{N}_l(e)$ are evidently invariant under a different set of choices of full sets $U$ and $D$.
\end{proof}

\begin{lemma} \label{sumsN} For each of the labeled polygons $P(\epsilon)$ we have 
\begin{align*}
\sum_e \mathcal{N}_{l_1}(e) & = |D|, \text{ and} \\
\sum_e \mathcal{N}_{l_2}(e) & = |D| \cdot (|U| - 1),
\end{align*}
where $\{l_1,l_2\} = \{1,2\}$.
\end{lemma}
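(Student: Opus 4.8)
The plan is to evaluate each sum by telescoping around the polygon $R_l(\epsilon)$ and then to identify the resulting multiple of $|D|$ using the geometry behind the $\mathcal{N}$-labels.

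First I would pin down a bookkeeping device. Fix the invariant cyclic ordering of $D$ attached to the class $l$ (well-defined by the Remark following the construction of the cyclic ordering) and index $D$ by $\Z/|D|$ accordingly. For a vertex $\vv{\alpha}$ of $R_l(\epsilon)$, the $\vv{\alpha}$-ordering of $D$ is then a linearization of this cyclic order, recorded by its initial index $k(\vv{\alpha})\in\Z/|D|$ (the position of the first curve of $D$ crossed by $\vv{\alpha}$); unwinding the definition, an edge $e=(\vv{\alpha_i},\vv{\alpha_j})$ of $R_l(\epsilon)$ has $\mathcal{N}_l(e)\equiv k(\vv{\alpha_i})-k(\vv{\alpha_j})\pmod{|D|}$. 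Listing the vertices of $R_l(\epsilon)$ cyclically as $\vv{\alpha}^{(1)},\ldots,\vv{\alpha}^{(|U|)}$ in the preferred orientation of $P(\epsilon)$, the sum $\sum_e\mathcal{N}_l(e)$ telescopes modulo $|D|$, so it is a non-negative multiple of $|D|$, namely $|D|$ times the winding number of the cyclic sequence $s\mapsto k(\vv{\alpha}^{(s)})$ around $\Z/|D|$. This reduces the lemma to showing that this winding number is $1$ for one class and $|U|-1$ for the other.

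The relation between the two classes is the easy half. The involution $\iota$ carries the vertices of $R_{l_1}(\epsilon)$ to those of $R_{l_2}(\epsilon)$ preserving the cyclic order (it is realized on $P(\epsilon)$ by the antipodal shift, since the two ends of each up curve link the two ends of every other), and reversing the orientation of an up curve reverses its induced ordering of $D$; hence the $R_{l_2}$-sequence equals $C-k(\vv{\alpha}^{(s)})$ for a constant $C$, and a short computation shows the two winding numbers add to $\#\{s:k(\vv{\alpha}^{(s)})\neq k(\vv{\alpha}^{(s+1)})\}$. So it is enough to treat a single class $l_1$: to show that cyclically consecutive vertices of $R_{l_1}(\epsilon)$ never induce the same first-crossed curve of $D$, and that the induced rotations of the $D$-ordering accumulate to exactly one full turn.

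The heart of the proof — and the step I expect to be the main obstacle — is this last claim. Concretely (cf.\ Figure \ref{orderingPic}), for two consecutive up curves the induced orderings of $D$ differ by cycling one block of consecutive down curves, of size $\mathcal{N}_{l_1}(e)$, from the back of the list to the front; the assertion is that, over the edges of $R_{l_1}(\epsilon)$, these blocks partition $D$, whence $\sum_e\mathcal{N}_{l_1}(e)=|D|$, and dually $\mathcal{N}_{l_2}(e)=|D|-\mathcal{N}_{l_1}(e)$ on corresponding edges gives $\sum_e\mathcal{N}_{l_2}(e)=|D|(|U|-1)$. I would establish the partition statement by passing to geodesic representatives and the universal cover: fixing a lift of the common point of the curves in $U$, the lifts of the standardly-oriented up curves through it have forward endpoints on $\partial\mathbb{H}^2$ occurring in the cyclic order of $R_{l_1}(\epsilon)$ and spanning a single arc cut off by their backward endpoints, and the down-curve lift meeting a given up-curve lift nearest the basepoint on the forward side advances monotonically around the (realization-independent) cyclic order of $D$ as the up-curve lift sweeps this arc, completing it exactly once. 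The delicate point is controlling precisely how the cyclic order of the up-curve endpoints at the basepoint interacts with the cyclic order of $D$ and the linking pattern of the relevant geodesic lifts — this is exactly where the orientation of $S$ and the coherence of the standard orientations (Lemma \ref{Cohere}) enter essentially.
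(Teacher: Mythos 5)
Your opening reductions (each sum is a non-negative multiple of $|D|$ by telescoping, $\iota$ acts on $P(\epsilon)$ like the antipodal map, so edges of $R_1$ and $R_2$ correspond) are fine, but the proposal has two genuine problems. First, the heart of the matter -- that for the standard class the shift blocks partition $D$, i.e.\ your ``winding number one'' -- is exactly what you do not prove: you sketch a geodesic/universal-cover argument and yourself flag the interaction of the two cyclic orders as the unresolved ``delicate point.'' The paper needs none of this machinery: it works with the fixed almost-realization of \S\ref{polygon section}, in which all curves of $U$ pass through one point and all curves of $D$ through another, so that (Figure \ref{orderingPic}) the label of an edge between consecutive outgoing rays is the number of down curves in the sector swept between them; going once around the common point these sectors exhaust $D$, giving $|D|$, and the non-standard polygon's labels are the complementary counts $|D|-|D_4|$, giving $|D|(|U|-1)$.

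Second, your reduction of the $l_2$-sum to the $l_1$-sum contains a false step. You require that cyclically consecutive vertices of $R_{l_1}(\epsilon)$ never induce the same first-crossed curve of $D$ (equivalently, no edge has zero shift), and you use $\mathcal{N}_{l_2}(\iota e)=|D|-\mathcal{N}_{l_1}(e)$ with shifts read as residues in $\{0,\ldots,|D|-1\}$. But whenever $|U|>|D|$ (for instance $\epsilon=(1,1,1,-1,-1,1,-1)$, where $|U|=4$ and $|D|=3$), pigeonhole applied to $\sum_e\mathcal{N}_1(e)=|D|$ forces some edge of $R_1(\epsilon)$ to carry the label $0$; see the edge labeled $(1,0)$ in Figure \ref{labeledPolygons}. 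At such an edge the two induced orderings of $D$ coincide, so under your purely combinatorial reading the corresponding edge of $R_2$ would also get shift $0$, your complementarity fails there, and your bookkeeping would output $|D|\cdot\#\{e:\mathcal{N}_{l_1}(e)\ne0\}-|D|$, which is strictly less than $|D|(|U|-1)$ (it gives $6$ rather than $9$ in the example). The second identity is only correct when the labels are read geometrically, as the count of outward-pointing down-curve arrows in the interval between the $\iota$-images of the edge's endpoints (this is the content of Lemma \ref{Picture} and of Figure \ref{orderingPic}); with that reading the degenerate $R_2$-edges receive label $|D|$, not $0$. So to repair your argument you would have to abandon the abstract shift bookkeeping at exactly these edges and pass to the explicit picture -- which is the paper's proof.
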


\begin{proof}
One of the polygons has vertices that determine the standard orientations of the curves in $U$, while the other has vertices that determine the non-standard ones. By examining Figure \ref{orderingPic}, one sees that the sum for the polygon with standard orientations determined at its vertices is the first sum above, while the sum for the other polygon is the second sum above.
\end{proof}

After relabeling, we assume from now on that $R_1(\epsilon)$ refers to the polygon whose vertices are points of $U\cap c$ with outward pointing arrows in the standard orientations of the curves in $U$. See Figure \ref{labeledPolygons} for examples corresponding to $\epsilon=(1,1,1-1,-1,1,-1)$ and $\epsilon=(-1,1,1,-1,-1,1,1)$.

\begin{figure}[h]
	\begin{minipage}[]{\linewidth}
	\centering
	\begin{lpic}{labelledPolygon1(13cm)}
		\lbl[]{296.5,150;$(1,1)$}
		\lbl[]{367.5,158.5;$(0,1)$}
		\lbl[]{397.5,121;$(1,0)$}
		\lbl[]{347,78;$(2,1)$}
	\end{lpic}
	\caption*{$\epsilon=(1,1,1-1,-1,1,-1)$}
	\end{minipage}
	\vspace{.3cm}\\
	\begin{minipage}[]{\linewidth}
	\centering
	\begin{lpic}{labelledPolygon2(13cm)}
		\lbl[]{292,125;$(1,0)$}
		\lbl[]{340,168;$(0,2)$}
		\lbl[]{395,121;$(1,0)$}
		\lbl[]{340,74;$(2,1)$}
	\end{lpic}
	\caption*{$\epsilon=(-1,1,1,-1,-1,1,1)$}
	\end{minipage}
\caption{Full sets of curves from the curve system $\Gamma(\epsilon)$, the `small circle' $c$, the polygon $P(\epsilon)$, and the polygon $R_1(\epsilon)$. Edge labels are written as the ordered pair $(\mathcal{M}_1(e),\mathcal{N}_1(e))$.}
\label{labeledPolygons}
\end{figure}

\begin{definition} An \textit{isomorphism of labeled polygons} from $P(\epsilon)$ to $P(\epsilon')$ is a permutation $\psi \in S_{2|U|}$, where $S_{n}$ denotes the symmetric group on $n$ symbols, that induces a label-preserving isomorphism between the labeled $1$-skeletons of $P(\epsilon) \cup R_1(\epsilon) \cup R_2(\epsilon)$ and $P(\epsilon')\cup R_1(\epsilon') \cup R_2(\epsilon')$. 
\end{definition}

By Proposition \ref{MCG-orbit restrictions}, a homeomorphism of the surface that takes $\Gamma(\epsilon)$ to $\Gamma(\epsilon')$ takes $A(\epsilon)$ to either $A(\epsilon')$ or $B(\epsilon')$. In the first case, by construction, the homeomorphism induces an isomorphism of labeled polygons $P(\epsilon)\cong P(\epsilon')$. Note that it is a consequence of Lemma \ref{sumsN} that we may conclude that, if $\psi: P(\epsilon)\cong P(\epsilon')$ is an isomorphism of labeled polygons, then $\psi$ takes $R_1(\epsilon)$ to $R_1(\epsilon')$ and $R_2(\epsilon)$ to $R_2(\epsilon')$.

\begin{remark}
\label{preservesR1}
As a consequence, any homeomorphism taking $\Gamma(\epsilon)$ to $\Gamma(\epsilon')$ that takes $A(\epsilon)$ to $A(\epsilon')$ must preserve the standard orientations of curves.
\end{remark}

\begin{proposition} \label{Iso} If $\psi: P(\epsilon) \cong P(\epsilon')$ is an isomorphism of labeled polygons, then $\epsilon$ and $\epsilon'$ are in the same $(\mathbb{Z}/2\mathbb{Z} \oplus D_{g})$-orbit of $\left\{-1,1\right\}^{g}$. 
\end{proposition}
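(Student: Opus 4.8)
The plan is to read off from $P(\epsilon)$ a combinatorial invariant that recovers $\epsilon$, regarded as a two-colored necklace on the $g$ cyclically arranged handles of $S$, up to the dihedral group $D_g$ of rotations and reflections; this is a fortiori a $(\mathbb{Z}/2\mathbb{Z}\oplus D_g)$-invariant, which gives the conclusion. The invariant I would use is the cyclic sequence of labels $\mathcal{N}_1(e)$ on the diagonal polygon $R_1(\epsilon)$: the content of the argument is that this sequence is precisely the \emph{gap sequence} of the necklace $\epsilon$, i.e.\ the cyclic list of the numbers of ``down'' handles lying between consecutive ``up'' handles.

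First I would establish this dictionary. Fix the standard almost-realization of $\Gamma(\epsilon)$ (curves of $U$ meeting at a point $p$, curves of $D$ at a point $p'$) and the standard orientations of the curves of $U$ as in Figure \ref{orderingPic}. Unwinding the words $\alpha_1=[r_{n+2}\ldots r_{2n+1}s_1]$, $\alpha_i=\sigma^{i-1}(\alpha_1)$ together with the chosen realizations in Figures \ref{alphaCurve}--\ref{deltaCurve}, one checks that the cyclic order of the outward standard rays at $p$ — equivalently, the cyclic order of the vertices of $R_1(\epsilon)$ — agrees with the cyclic order of the up handles of $\epsilon$ among all $g$ handles, and that for the edge $e$ of $R_1(\epsilon)$ running from the $k$th up handle to the next one, $\mathcal{N}_1(e)$ equals the number of down handles lying cyclically between them. (Figure \ref{orderingPic} is exactly this statement in the case $|U|=2$; the general case is obtained by iterating the same local analysis for consecutive standard up rays, and the identity $\sum_e\mathcal{N}_1(e)=|D|$ of Lemma \ref{sumsN} is the global consistency check, reflecting that traversing all of $R_1(\epsilon)$ shifts the induced ordering of $D$ by one full cycle.) Along the way one records that the companion labels $\mathcal{M}_1(e)$, the involution $\iota$, and the polygon $R_2(\epsilon)$ are all determined by the same necklace data; their precise form will not be needed.

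Granting the dictionary, the remainder is formal. A two-colored necklace with a prescribed number $u=|U|\ge 2$ of up beads is determined, up to cyclic rotation, by its gap sequence up to cyclic rotation, and up to the full dihedral group by its gap sequence up to cyclic rotation and reversal. Now suppose $\psi\colon P(\epsilon)\cong P(\epsilon')$ is an isomorphism of labeled polygons. Since $\psi$ induces a label-preserving isomorphism of the labeled $1$-skeletons of $P(\epsilon)\cup R_1(\epsilon)\cup R_2(\epsilon)$ and $P(\epsilon')\cup R_1(\epsilon')\cup R_2(\epsilon')$, it carries the diagonals forming $R_1(\epsilon)\cup R_2(\epsilon)$ onto those forming $R_1(\epsilon')\cup R_2(\epsilon')$; invoking Lemma \ref{sumsN} to separate $R_1$ from $R_2$ (and noting that in the degenerate case $|U|=2$, where the two sums of Lemma \ref{sumsN} coincide, interchanging $R_1$ and $R_2$ costs at most a reflection of the necklace), $\psi$ restricts to an isomorphism of the labeled cycle $R_1(\epsilon)$ onto $R_1(\epsilon')$. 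Any isomorphism between labeled cycle graphs is a rotation or a reflection of the underlying cycle, so the cyclic sequence of $\mathcal{N}_1$-labels of $\epsilon$ agrees with that of $\epsilon'$ up to rotation and reversal. By the dictionary the gap sequences of $\epsilon$ and $\epsilon'$ therefore agree up to rotation and reversal, so $\epsilon$ and $\epsilon'$ lie in the same $D_g$-orbit, hence in the same $(\mathbb{Z}/2\mathbb{Z}\oplus D_g)$-orbit of $\{-1,1\}^g$.

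The main obstacle is the dictionary of the second paragraph: honestly verifying that the vertices of $R_1(\epsilon)$ appear in the cyclic order of the up handles and that $\mathcal{N}_1(e)$ counts the intervening down handles. This is a finite but delicate unwinding of the definitions of the $\alpha_i$ in terms of the generators $r_i,s_i$, combined with a careful reading of the standard almost-realization and of Figure \ref{orderingPic}. Once it is in place, reconstructing a necklace from its gap sequence and classifying isomorphisms of labeled cycles are entirely routine, and the passage from $D_g$ to $\mathbb{Z}/2\mathbb{Z}\oplus D_g$ is immediate.
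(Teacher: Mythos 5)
Your overall strategy (read a necklace-type invariant off $P(\epsilon)$, then observe that an isomorphism of labeled polygons can only rotate or reflect the cycle $R_1$) has the same shape as the paper's argument, but the dictionary you rely on is false as stated, and it is exactly the non-routine part. You claim that for an edge $e$ of $R_1(\epsilon)$ the label $\mathcal{N}_1(e)$ equals the number of down handles lying between the corresponding consecutive up handles. The paper's Lemma \ref{Alt} shows that the correct count is $\mathcal{M}_1(e)+\mathcal{N}_1(e)-1$, and the worked examples in Figure \ref{labeledPolygons} already refute your version: for $\epsilon=(1,1,1,-1,-1,1,-1)$ the edge labels $(\mathcal{M}_1(e),\mathcal{N}_1(e))$ are $(1,1),(0,1),(1,0),(2,1)$, so the cyclic $\mathcal{N}_1$-sequence is $(1,1,0,1)$, whereas the gap sequence of the necklace is $(0,0,2,1)$; even the multisets differ (both sum to $|D|=3$, as forced by Lemma \ref{sumsN}, but that is all they share). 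By contrast $\mathcal{M}_1(e)+\mathcal{N}_1(e)-1$ gives $(1,0,0,2)$, which is the gap sequence up to rotation. So the invariant you propose to extract does not equal the gap sequence, and the reconstruction of $\epsilon$ from $P(\epsilon)$ in your second paragraph breaks down.

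Beyond that, the step you defer as ``a finite but delicate unwinding'' is the entire mathematical content of the proposition, and it is what the paper's proof actually does: it introduces the transversals $\omega_1,\ldots,\omega_g$, builds a cyclic order on $(U\cap c)\sqcup(D\cap c')$ in which consecutive points jump by $(g+1)/2$ in the handle order and alternate between inward and outward arrows, proves Lemma \ref{Picture} expressing $\mathcal{M}_1(e)$ and $\mathcal{N}_1(e)$ as arrow counts in the interval $\iota(I)$, and only then deduces Lemma \ref{Alt}. In particular, the cyclic order of the vertices of $P(\epsilon)$ around $c$ is not simply the cyclic order of the up handles, which your sketch assumes without justification. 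The purely formal parts of your write-up are fine: reconstructing a two-colored necklace from its gap sequence up to dihedral symmetry, the fact that an isomorphism of labeled cycles is a rotation or reflection, the separation of $R_1$ from $R_2$ via Lemma \ref{sumsN} together with the $|U|=2$ degeneracy, and the passage from $D_g$ to $\mathbb{Z}/2\mathbb{Z}\oplus D_g$. But with the key identification both unproved and incorrect as stated, the proposal does not establish the proposition; to repair it you must replace $\mathcal{N}_1(e)$ by $\mathcal{M}_1(e)+\mathcal{N}_1(e)-1$ and actually carry out an analysis along the lines of Lemmas \ref{Picture} and \ref{Alt} showing that this quantity counts the intervening down handles.
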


\begin{proof}  
Our strategy below is to make various choices for labels and orientations of curves that make the structure of $P(\epsilon)$ more transparent--by Lemma \ref{WellDef} these choices are allowed. With the structure of $P(\epsilon)$ clear, the result will follow easily. Roughly speaking, the constructions below are careful treatments of `how the curves in $\Gamma(\epsilon)$ look,' when they are drawn conveniently on the surface. In what follows, we fix the almost realization described at the beginning of this section, and fix as well the standard orientations of each of the curves.

We first establish a cyclic ordering on all the curves in $U \cup D$. Consider the collection of curves $\omega_{1},\ldots,\omega_{g}$ in Figure \ref{transversals}, realized in minimal position with $\Gamma(\epsilon)$. Note that for each $i$, the curve $\omega_{i}$ intersects exactly one curve in $U \cup D$. Given $\gamma \in U \cup D$, the \textit{transversal} to $\gamma$ is the unique curve in $\left\{\omega_{1},\ldots, \omega_{g} \right\}$ intersecting $\gamma$. Thus there is a cyclic ordering on $U \cup D$ associated to the cyclic ordering $(1,\ldots,g)$ on the indices of the $\omega_{i}$ curves. 

\begin{figure}[]
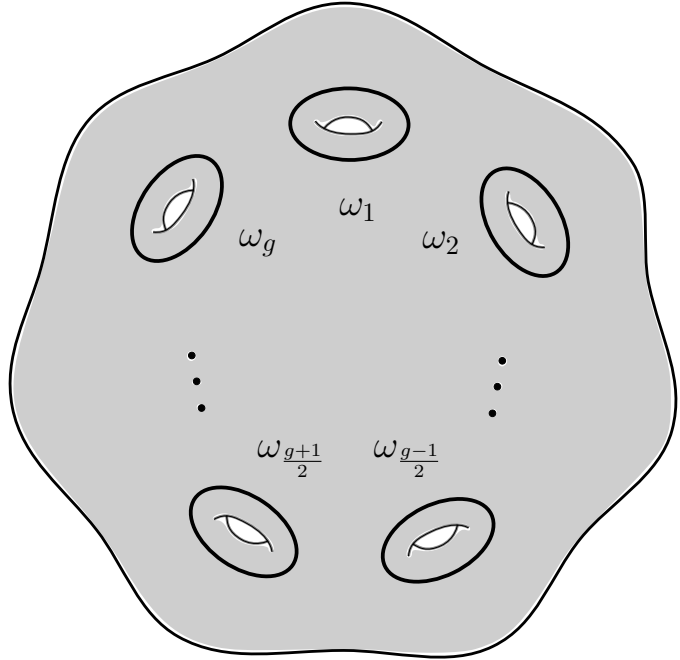

	\centering
	\Large
	\begin{lpic}{OmegaCurves(3.5in)}
		\lbl[]{105,135;$\omega_1$}
		\lbl[]{130,125;$\omega_2$}
		\lbl[]{120,60;$\omega_{\frac{g-1}{2}}$}
		\lbl[]{85,60;$\omega_{\frac{g+1}{2}}$}
		\lbl[]{75,125;$\omega_g$}
	\end{lpic}
	\caption{The transversals $\omega_1,\ldots,\omega_g$.}
	\label{transversals}
\end{figure}

We will now describe a cyclic order on $(U \cap c) \sqcup (D \cap c')$. For $\gamma \in U \cup D$, suppose $\gamma \in U$, and let $\omega$ denote the transversal to $\gamma$. Then $c$ separates $\gamma$ into two sub-arcs, both bounded by the two points $s_{1}(\gamma), s_{2}(\gamma)$ of $\gamma \cap c \subset U \cap c$. Let $\Lambda(\gamma)$ denote the sub-arc whose interior is not contained in the disk bounded by $c$. Then $\Lambda(\gamma)$ is subdivided further into two sub-arcs, which we denote $S_{1}(\gamma)$ and $S_{2}(\gamma)$; for each $i=1,2$, the arc $S_{i}(\gamma)$ is the sub-arc of $\Lambda(\gamma)$ bounded by $s_{i}(\gamma)$ and by $\gamma \cap \omega$ (see Figure \ref{S1S2arcs}). We remark that the arcs $S_1(\gamma)$ and $S_2(\gamma)$ are distinguished by the choice that $S_{1}(\gamma)$ contains none of the intersection points with $D$ in the chosen realization of $\Gamma(\epsilon)$. Similarly, if $\gamma \in D$ then $\Lambda(\gamma)$ is the sub-arc of $\gamma$ not contained within $c'$, and $\Lambda(\gamma)$ is subdivided into $S_1(\gamma)$ and $S_2(\gamma)$, where $S_{1}(\gamma)$ is the sub-arc of $\Lambda(\gamma)$ containing no intersections with $U$.

\begin{figure}[]
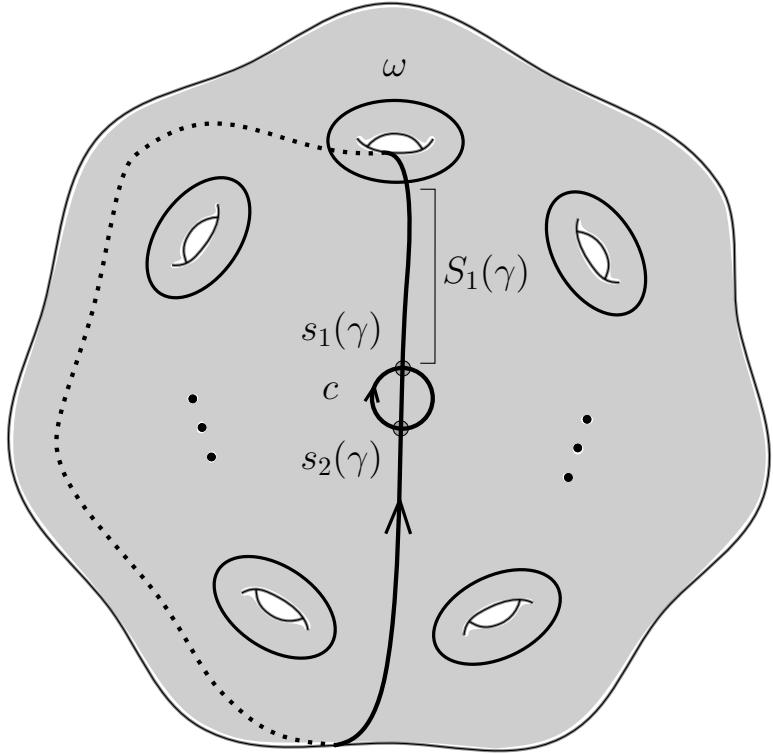

	\centering
	\Large
	\begin{lpic}{S1S2(4.in)}
		\lbl[]{126,126;$S_1(\gamma)$}
		\lbl[]{88,77;$s_2(\gamma)$}
		\lbl[]{88,111;$s_1(\gamma)$}
		\lbl[]{85,95;$c$}
		\lbl[]{102,180;$\omega$}
	\end{lpic}
	\caption{The intersections $s_1(\gamma)$ and $s_2(\gamma)$ of $\gamma \in U$ with the small circle $c$, and the sub-arc $S_1(\gamma)$ of $\gamma$. (The arc $S_2(\gamma)$ is not pictured).}
	\label{S1S2arcs}
	\vspace{.2cm}
\end{figure}

\begin{figure}[]
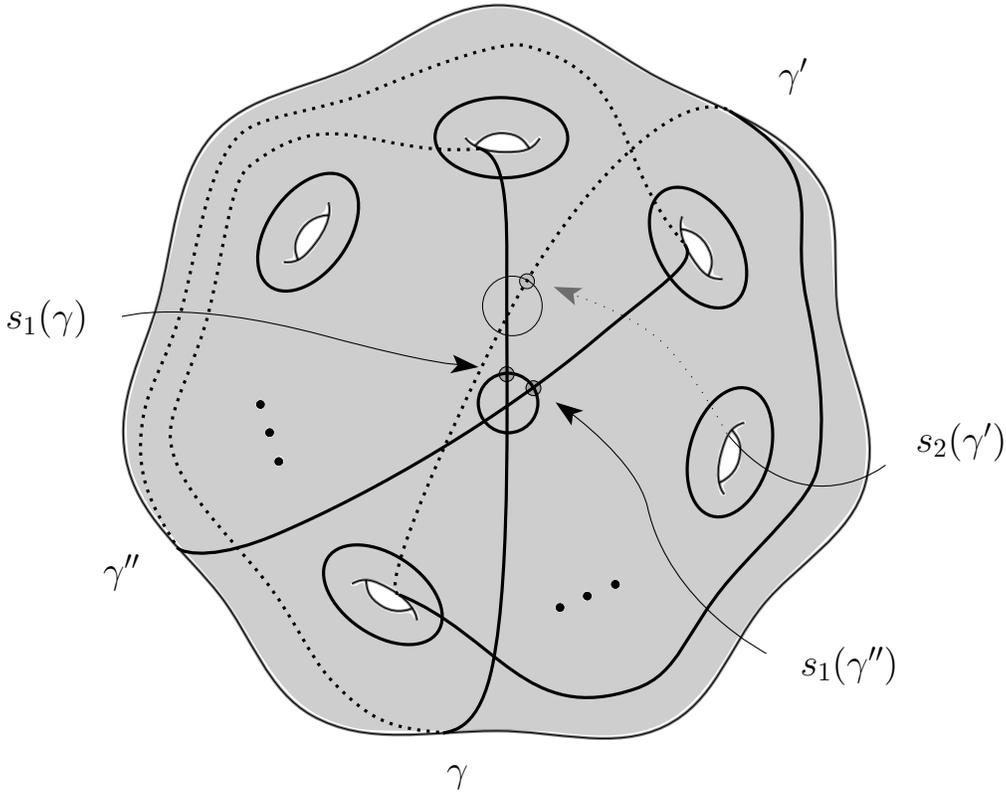

	\centering
	\Large
	\begin{lpic}{cycUD(4in)}
		\lbl[]{-20,113;$s_1(\gamma)$}
		\lbl[]{195,20;$s_1(\gamma'')$}
		\lbl[]{225,80;$s_2(\gamma')$}
		\lbl[]{0,45;$\gamma''$}
		\lbl[]{90,-10;$\gamma$}
		\lbl[]{180,178;$\gamma'$}
	\end{lpic}
	\vspace{.4cm}
	\caption{In the cyclic order on $(U \cap c) \sqcup (D \cap c')$, $s_{2}(\gamma')$ immediately follows $s_{1}(\gamma)$, and $s_{1}(\gamma'')$ immediately follows $s_{2}(\gamma')$.}
	\label{cyclic order}
\end{figure}

Starting at some point $v$ of $U \cap c$, let $\gamma \in U \cup D$ denote the up curve associated to $v$, and let $\omega_{i}$ denote the transversal to $\gamma$. The point of $(U \cap c) \sqcup (D \cap c')$ immediately following $v$ is obtained as follows: We assume that $v= s_{1}(\gamma)$. Let $\gamma'$ be the curve obtained from $\gamma$ by moving $(g+1)/2$ around the cyclic order on $U \cup D$. Thus the transversal to $\gamma'$ is $\omega_{i+ (g+1)/2}$, where the addition is interpreted modulo $g$. Then the point of $(U \cap c) \sqcup (D \cap c')$ immediately following $v$ is $s_{2}(\gamma')$. 

The next point is obtained in a similar fashion: starting at $s_{2}(\gamma')$ we move to the curve whose index in the cyclic order on $U \cup D$ is $(g+1)/2$ from that of $\gamma'$, in the clockwise direction. Letting $\gamma''$ denote this curve, the next point in the cyclic order on $(U \cap c) \sqcup (D \cap c')$ is $s_{1}(\gamma'')$. The ordering is defined by iterating this procedure: add $(g+1)/2$ to the cyclic index, and alternate between $s_{1}$ and $s_{2}$. See Figure \ref{cyclic order}.

Thus far, we have established a cyclic ordering on $U \cup D$, and an associated cyclic ordering on $(U \cap c) \sqcup (D \cap c)$; we also recall that the curves in $U$ and $D$ are equipped with standard orientations. Note that these choices are compatible in the following sense: Recall that the standard orientations of the curves induce inward and outward pointing arrows on the points in $(U\cap c) \sqcup (D\cap c)$. In the established cyclic ordering of $(U\cap c) \sqcup (D\cap c)$, each inward pointing arrow is followed by an outward pointing arrow, and likewise each outward pointing arrow is followed by an inward pointing arrow.

\begin{lemma} \label{Picture} For an edge $e=e(\vv{\alpha_{i}}, \vv{\alpha_{j}})$ of $R_1(\epsilon)$, the integer $\mathcal{M}_1(e)$ (resp.~$\mathcal{N}_1(e)$) is equal to the number of outwardly pointing arrows of $U$ (resp.~$D$) which follow $\iota(\vv{\alpha_{i}})$ and which precede $\iota(\vv{\alpha_{j}})$. 
\end{lemma}

\begin{proof}
With the given choices made, the integer $\mathcal{M}_1(e)$ is equal to the number of inward pointing arrows that follow $\vv{\alpha_i}$ and precede $\vv{\alpha_j}$. Apply the involution $\iota$ and the claim follows for $\mathcal{M}_1(e)$. For $\mathcal{N}_1(e)$, the edge $e$ partitions the down curves into four pieces $D_1,D_2,D_3,D_4$, as in Figure \ref{orderingPic}. The label $\mathcal{N}_1(e)$ of the edge $e$ is equal to $|D_4|$, which is also the number of outwardly pointing arrows between $\iota(\vv{\alpha_{i}})$ and $\iota(\vv{\alpha_{j}})$.
\end{proof}

Henceforth, by an \textit{interval} of $(U \cap c) \sqcup (D \cap c')$, we mean the subset which follows a particular point in $(U \cap c) \sqcup (D \cap c')$ and which precedes some other point, as in the statement of Lemma \ref{Picture}. Evidently, given an interval $I$, $\iota(I)$ is an interval bounded by the image of the end points of $I$ under $\iota$. 

To finish the proof of Proposition \ref{Iso}, it suffices to show that the orbit of $\epsilon$ can be constructed from information about $P(\epsilon)$ which is preserved under isomorphism of labeled polygons. The number of $1$'s in $\epsilon$ is equal to $|A(\epsilon)|/2$, which is equal to half of the number of edges of $P(\epsilon)$. Thus all that remains is to determine how to interleave the $|B(\epsilon)|/2$ necessary $-1$'s to obtain $\epsilon$. The following lemma completes the proof of Proposition \ref{Iso}.
\end{proof}

\begin{lemma} \label{Alt} If $e=(\vv{\alpha_{i}}, \vv{\alpha_{j}})$ is an edge of $R_1(\epsilon)$, then the number of $-1$'s between $\epsilon_i$ and $\epsilon_j$ is equal to $\mathcal{M}_1(e)+\mathcal{N}_1(e)-1$. 

\end{lemma}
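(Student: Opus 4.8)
The plan is to turn Lemma~\ref{Alt} into a counting statement about the combined cyclic order on $(U\cap c)\sqcup(D\cap c')$ and then read off the count from the explicit description of that order. By Lemma~\ref{Picture}, for an edge $e=(\vv{\alpha_{i}},\vv{\alpha_{j}})$ of $R_1(\epsilon)$ the integer $\mathcal{M}_1(e)$ (resp.~$\mathcal{N}_1(e)$) counts the outward-pointing arrows lying on curves of $U$ (resp.~of $D$) in the open interval of $(U\cap c)\sqcup(D\cap c')$ running from $\iota(\vv{\alpha_{i}})$ to $\iota(\vv{\alpha_{j}})$. Hence $\mathcal{M}_1(e)+\mathcal{N}_1(e)$ is simply the total number of outward-pointing arrows strictly between $\iota(\vv{\alpha_{i}})$ and $\iota(\vv{\alpha_{j}})$ in this interval, and the task reduces to computing that total.

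For this I would unwind the definition of the cyclic order on $(U\cap c)\sqcup(D\cap c')$. Recall it is obtained by repeatedly stepping $\tfrac{g+1}{2}$ forward in the cyclic order $(1,\dots,g)$ on $U\cup D$ while alternating between the $s_1$- and $s_2$-points, and that, as noted just before Lemma~\ref{Picture}, the inward- and outward-pointing arrows strictly alternate along the order, in step with the alternation of $s_1$ and $s_2$. So, after fixing a starting point, one may index the $2g$ points by a time $t\in\Z/2g\Z$ so that the point at time $t$ lies on the curve at position $t\cdot\tfrac{g+1}{2}\bmod g$ in $U\cup D$ and is outward-pointing exactly when $t$ is even. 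The key arithmetic fact is $2\cdot\tfrac{g+1}{2}\equiv 1\pmod g$ (valid since $g$ is odd), which collapses this indexing: the outward- and inward-pointing points on the curve at position $i$ occur at times $2i$ and $2i+g$ (mod $2g$), respectively. Since the vertices of $R_1(\epsilon)$ are the outward-pointing points of $U$, the vertex $\vv{\alpha_i}$ sits at time $2i$, so $\iota(\vv{\alpha_i})$ — the inward-pointing point on the same curve — sits at time $2i+g$, and likewise $\iota(\vv{\alpha_j})$ at time $2j+g$. The points strictly between them in cyclic order are those at times $2i+g+1,\dots,2j+g-1$; since $2i+g$ is odd, exactly $j-i$ of these times are even, where $j-i$ denotes the forward cyclic gap from $i$ to $j$. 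Therefore $\mathcal{M}_1(e)+\mathcal{N}_1(e)=j-i$.

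It remains to recognize $j-i-1$ as the number of $-1$'s between $\epsilon_i$ and $\epsilon_j$. Because $e$ is an edge of $R_1(\epsilon)$, its endpoints $\vv{\alpha_i}$ and $\vv{\alpha_j}$ are cyclically consecutive among the $|U|$ vertices of $R_1(\epsilon)$; under the identification of those vertices with the indices $\{k:\epsilon_k=1\}$ in their natural cyclic order (i.e. the cyclic order inherited from $(1,\dots,g)$ on $U\cup D$), this means $i$ and $j$ are cyclically consecutive indices with $\epsilon=1$. Consequently every index strictly between $i$ and $j$ satisfies $\epsilon=-1$, so the number of such $-1$'s equals the cyclic gap minus one, namely $(j-i)-1$. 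Combining with the previous paragraph, this number is $\mathcal{M}_1(e)+\mathcal{N}_1(e)-1$, as claimed. (A useful global sanity check: summing over all edges gives $\sum_e\big(\mathcal{M}_1(e)+\mathcal{N}_1(e)\big)=g$ and hence $\sum_e\big(\mathcal{M}_1(e)+\mathcal{N}_1(e)-1\big)=g-|U|=|D|$, consistent with Lemma~\ref{sumsN}.)

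The substance here is bookkeeping rather than any new idea: one must pin down which of the two points of a curve on $c$ (or $c'$) is the outward-pointing one under the standard orientations, make the time-indexing of the combined cyclic order compatible with that choice, and confirm that the vertices of $R_1(\epsilon)$ are encountered in the cyclic order inherited from $(1,\dots,g)$ on $U\cup D$ — so that ``consecutive vertices of $R_1$'' genuinely translates to ``cyclically consecutive indices with $\epsilon=1$.'' Once these conventions are fixed, the statement is precisely the congruence $2\cdot\tfrac{g+1}{2}\equiv 1\pmod g$ together with a count of even residues in an interval of odd length.
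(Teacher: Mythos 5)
Your proposal is correct, and it reaches the identity by a recognizably different mechanism than the paper. The paper's proof never computes either side of the equation: it observes that the $-1$'s correspond to the outward arrows in the interval $I$ running between the outward arrows at $\vv{\alpha_i}$ and $\vv{\alpha_j}$ (the edge condition excludes outward arrows of up curves there), uses the strict alternation of inward and outward arrows to say this count is one less than the number of inward arrows in $I$, and then transfers via the involution $\iota$ to the outward arrows in $\iota(I)$, which Lemma \ref{Picture} identifies with $\mathcal{M}_1(e)+\mathcal{N}_1(e)$. You instead evaluate both quantities in closed form: parametrizing the combined cyclic order by $\Z/2g\Z$ and using $2\cdot\frac{g+1}{2}\equiv 1 \pmod g$, you show $\mathcal{M}_1(e)+\mathcal{N}_1(e)$ equals the cyclic gap between the two curves in the transversal order, and separately that the number of $-1$'s is that gap minus one. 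Your parametrization in fact makes explicit the structural fact the paper exploits implicitly, namely that $\iota$ is the half-turn $t\mapsto t+g$ of the combined order, so it carries intervals to intervals and swaps arrow types; and your worry about which of $s_1,s_2$ is outward is harmless, since the count comes out to the gap for either parity convention. The one input you defer --- that the outward arrows occur around the combined order in the handle-index order, equivalently that consecutive $R_1$-vertices correspond to cyclically consecutive indices with $\epsilon=1$ with only $-1$'s in between --- is genuinely needed, but it is exactly the same geometric compatibility the paper's own proof invokes in its second sentence ("each $-1$ \ldots corresponds to a down curve whose outwardly-pointing arrow lies between \ldots"), so your argument uses no more than the paper does. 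What your route buys is an explicit formula ($\mathcal{M}_1(e)+\mathcal{N}_1(e)$ equals the handle gap), which makes Lemma \ref{sumsN} and the reconstruction of $\epsilon$ in Proposition \ref{Iso} transparent; what the paper's route buys is brevity and independence from any coordinate bookkeeping on the cyclic order.
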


\begin{figure}[h]
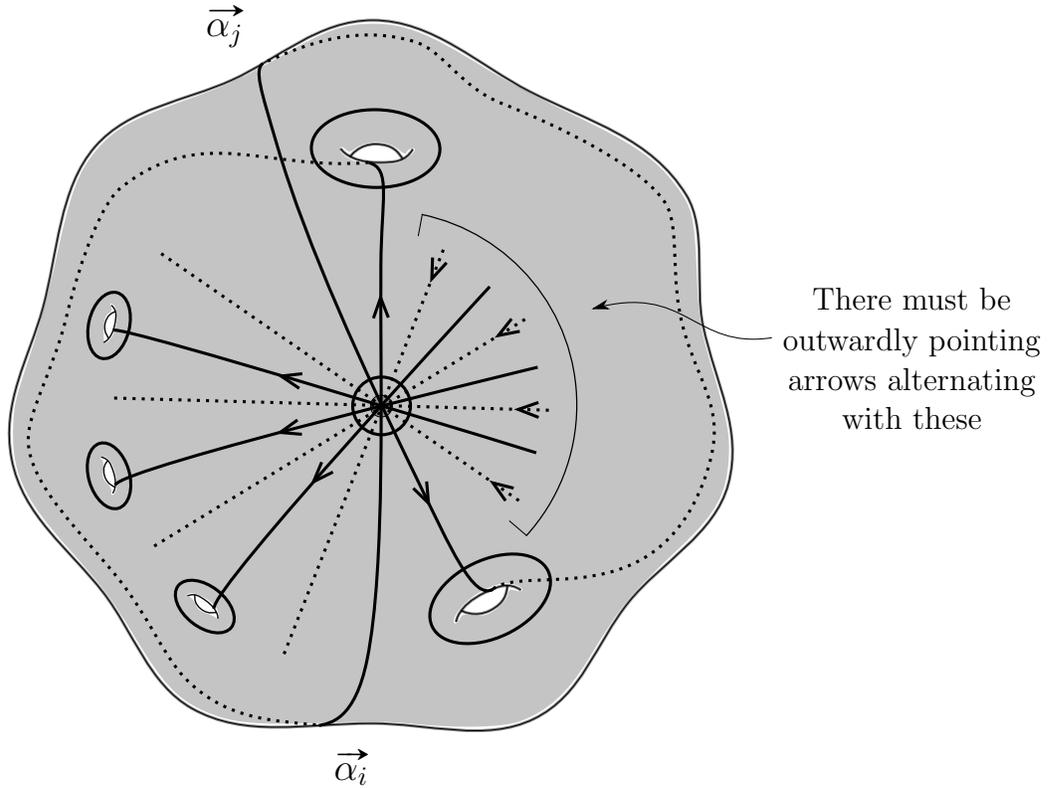

	\centering
	\large
	\begin{lpic}{Alternating(4in)}
		\lbl[]{250,120;There must be}
		\lbl[]{250,108; outwardly pointing}
		\lbl[]{250,97;arrows alternating}
		\lbl[]{250,87;with these}
		\Large
		\lbl[]{95,-10;$\vv{\alpha_i}$}
		\lbl[]{60,195;$\vv{\alpha_j}$}
	\end{lpic}
	\vspace{.5cm}
	\caption{Given standard orientations of the curves in $U$ and $D$, the arrows at the points of $(U\cap c) \sqcup (D\cap c')$ alternate between inward and outward pointing.}
	\label{alternatingPic}
\end{figure}

\begin{proof} Using the standard orientations for the curves in $U\cup D$ again, the consecutive elements of $(U \cap c) \sqcup (D \cap c')$ are equipped with opposite pointing arrows. Each $-1$ between the $1$'s associated to $\vv{\alpha_{i}}$ and $\vv{\alpha_{j}}$ corresponds to a down curve whose outwardly-pointing arrow lies between the outwardly-pointing arrows for $\vv{\alpha_{i}}$ and $\vv{\alpha_{j}}$. Since $e$ is an edge of $R_1(\epsilon)$, there are no outwardly pointing arrows for up curves between the outwardly pointing arrows of $\vv{\alpha_{i}}$ and $\vv{\alpha_{j}}$. 

By construction, the inward and outward pointing arrows alternate in $(U\cap c) \sqcup (D\cap c')$. Thus the number of outward pointing arrows that are between the outward pointing arrows at $\vv{\alpha_{i}}$ and $\vv{\alpha_{j}}$ is equal to one less than the number of inward pointing arrows in this same interval $I$ (see Figure \ref{alternatingPic}). Each inward pointing arrow in this interval corresponds to an outward pointing arrow in $\iota(I)$.

In turn, each outward pointing arrow in $\iota(I)$  is associated to either an up curve or a down curve-- and is therefore accounted for by either $\mathcal{M}_1(e)$ or $\mathcal{N}_1(e)$ by Lemma \ref{Picture}. This completes the proof of Lemma \ref{Alt}. 
\end{proof}

We are now ready to prove Proposition \ref{distinguishing orbits}:

\begin{proof}[Proof of Proposition \ref{distinguishing orbits}]
{Let $\phi\in \mbox{Mod}^*(S)$ have $\phi\cdot \Gamma(\epsilon)= \Gamma(\epsilon')$. By Proposition \ref{MCG-orbit restrictions}, we have either $\phi \cdot A(\epsilon) =A(\epsilon')$ or $\phi \cdot A(\epsilon) = B(\epsilon')$. After composing if necessary with the reflection of the surface that exchanges all up curves $A$ with all down curves $B$ (preserving the $(\Z/2\Z\oplus D_{g})$-orbit of $\epsilon'$), we assume that $\phi\cdot A(\epsilon) = A(\epsilon')$. In this case, $\phi$ induces an isomorphism of the labeled polygons $P(\epsilon) \cong P(\epsilon')$. By Proposition \ref{Iso}, this implies that $\epsilon$ and $\epsilon'$ are in the same $(\Z/2\Z\oplus D_{g})$-orbit.}
\end{proof}

\section{Stabilizing 1-systems}
\label{stabilizing}

We return to stabilization. Suppose $\lambda$ is an almost realization of a maximal complete 1-system on the surface $S_g$ of genus $g$. Given any arc $\alpha$ on $S_g$, we can delete tiny disks at its endpoints (in the complement of any of the curves of $\lambda$), and glue in an annulus. The result is a surface $S_{g+1}$ of genus $g+1$, and a complete 1-system naturally in correspondence with $\Gamma$. We refer to this system of curves by $\Gamma$ as well, the distinction being clear from context. Note that $\Gamma$ is not maximum in $S_{g+1}$, as it consists of two too few curves. 

When $\alpha$ intersects each of the curves of $\lambda$ once (on $S_g$), there are two readily available curves to add: Concatenate $\alpha$ with an arc that crosses the annulus to form a new simple closed curve $\alpha'$. By construction, $\alpha'$ intersects each of the curves of $\Gamma$ once. Moreover, the Dehn twist of $\alpha'$ around the core curve of the annulus, which we denote $\alpha''$, intersects each of the curves of $\Gamma\cup\{\alpha'\}$ once. Thus, given an arc $\alpha$ that intersects each of the curves of a realization of a maximum complete 1-system $\lambda$ once, we produce the \emph{stabilization} of $\lambda$ along the arc $\alpha$, denoted $\mathrm{stab}(\lambda,\alpha):=\Gamma\cup\{\alpha',\alpha''\}$, a maximum complete 1-system on $S_{g+1}$. The following is immediate:

\begin{lemma}
\label{stabilized 3-cubes general 1}
{For each $\gamma\in\Gamma$, the trio $\{\alpha',\alpha'',\gamma\}$ in $\mathrm{stab}(\lambda,\alpha)$ forms a triangle. If three curves don't form a triangle in $\Gamma$, then the corresponding curves don't form a triangle in $\mathrm{stab}(\lambda,\alpha)$.}
\end{lemma}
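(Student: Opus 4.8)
\textbf{Proof plan for Lemma \ref{stabilized 3-cubes general 1}.}

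The plan is to argue both statements by tracking what the stabilization procedure does to the complement of the curves on the surface. First I would set up the relevant picture: on $S_{g}$ the arc $\alpha$ meets each curve $\gamma\in\lambda$ exactly once, so after excising the two small disks at the endpoints of $\alpha$ and gluing in the annulus $A$, the curves $\alpha'$ and $\alpha''$ are obtained by concatenating $\alpha$ with a core-crossing arc of $A$ and then Dehn-twisting one of them around the core of $A$. Thus inside the annulus $A$, the arcs $\alpha'\cap A$ and $\alpha''\cap A$ cross once, while outside $A$ both run parallel to $\alpha$. Any fixed curve $\gamma\in\Gamma$ crosses $\alpha$ once, hence crosses each of $\alpha'$ and $\alpha''$ once in a neighborhood of $\alpha$ and nowhere near $A$; consequently the three arcs of $\alpha',\alpha'',\gamma$ near the point $\alpha\cap\gamma$, together with the single interior crossing of $\alpha'$ and $\alpha''$ inside $A$, bound an embedded triangular disk. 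This disk is disjoint from the remaining curves of $\Gamma$ because it may be taken in a thin neighborhood of $\alpha$ (where $\gamma$ is the only curve meeting $\alpha$) together with a thin sliver inside $A$ (which contains no curve of $\Gamma$). This establishes the first assertion, that $\{\alpha',\alpha'',\gamma\}$ forms a triangle in $\mathrm{stab}(\lambda,\alpha)$.

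For the second assertion, suppose three curves $\gamma_{1},\gamma_{2},\gamma_{3}\in\Gamma$ do not form a triangle on $S_{g}$. By Lemma \ref{triangles}, it suffices to exhibit one realization of the corresponding three curves in $\mathrm{stab}(\lambda,\alpha)$ whose complement has no triangular component. The point is that the copies of $\gamma_{1},\gamma_{2},\gamma_{3}$ inside $\mathrm{stab}(\lambda,\alpha)$ are literally the same curves, drawn in the same positions as in $\lambda$; the stabilization only modifies $S_{g}$ along two tiny disks disjoint from all curves of $\Gamma$ and glues in $A$, which none of the $\gamma_{i}$ enter. So a realization of $\{\gamma_{1},\gamma_{2},\gamma_{3}\}$ in $\lambda$ with no triangular component in $S_{g}\setminus\{\gamma_{1},\gamma_{2},\gamma_{3}\}$ gives, after the surgery, a realization of the same three curves in $S_{g+1}$; removing disks from the complement and gluing in an annulus can only merge complementary regions or add handles, and in particular cannot create a new embedded triangular disk bounded by arcs of $\gamma_{1},\gamma_{2},\gamma_{3}$ (such a disk, being disjoint from $A$ and from the two surgery disks, would already have been present in $S_{g}$). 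Hence $\{\gamma_{1},\gamma_{2},\gamma_{3}\}$ does not form a triangle in $\mathrm{stab}(\lambda,\alpha)$ either.

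The main obstacle is making the last claim airtight: one must verify that no triangular complementary region is \emph{created} by the surgery. The cleanest way is to observe that a triangle in the complement of $\{\gamma_{1},\gamma_{2},\gamma_{3}\}$ in $S_{g+1}$ is an embedded disk whose interior is disjoint from these three curves; since the three curves avoid $A$ and the two excised disks entirely, one can isotope such a triangle off of $A$, and then its image lies in $S_{g}$ (with the two surgery disks filled back in), where it is again a triangle in the complement — contradicting the hypothesis. I would also note in passing that an identical argument, applied in reverse, underlies the remark after Question \ref{Stab} that stabilized systems always have dual cube complexes of dimension at least three.
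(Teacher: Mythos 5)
Your overall plan is sound, and for context: the paper offers no argument at all here (the lemma is stated as ``immediate''), so there is nothing to compare against; your write-up is essentially a careful unpacking of the intended picture, and both halves go through. Two points deserve attention. First, your parenthetical justification that the triangular disk ``is disjoint from the remaining curves of $\Gamma$ because it may be taken in a thin neighborhood of $\alpha$ (where $\gamma$ is the only curve meeting $\alpha$)'' is false as stated: by the very hypothesis of the stabilization, \emph{every} curve of $\lambda$ meets $\alpha$ exactly once, so the strip between $\alpha'$ and $\alpha''$ running from $\gamma\cap\alpha$ into the annulus is in general crossed by other curves of $\Gamma$. Fortunately this claim is not needed: a triangle in the sense of the paper is an embedded disk bounded by arcs of the \emph{trio} only (this is all that Lemma \ref{tool1} requires), so the half of the strip between $\alpha'$ and $\alpha''$ cut off by the short arc of $\gamma$ already does the job, and you should simply drop the disjointness assertion.

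Second, the step ``one can isotope such a triangle off of $A$'' is where the argument needs its one dose of rigor, and it can be supplied cheaply: if $D\subset S_{g+1}$ is an embedded disk bounded by arcs of $\gamma_1,\gamma_2,\gamma_3\in\Gamma$, let $c$ be the core of the glued annulus; then $\partial D\cap c=\emptyset$, so $D\cap c$ is open and closed in $c$, hence empty or all of $c$, and $c\subset D$ is impossible since $c$ is essential (it meets $\alpha'$ once). Thus $D$ misses $c$ entirely and lies in $S_{g+1}\setminus c\cong S_g$ minus two disks, giving a triangle in $S_g$ -- no isotopy is needed. The same observation also closes a small gap you left implicit: to apply (the contrapositive of) Lemma \ref{triangles} you need the old curves, as drawn in $S_{g+1}$, to still be a realization there, i.e.\ to form no bigons; a bigon disk would likewise have to miss $c$ and hence descend to a bigon in $S_g$, contradicting minimal position of $\lambda$. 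With these adjustments your proof is complete.
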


There is a special case of this stabilization procedure: Choose a curve $\gamma\subset\lambda$, and fix an identification of $\gamma$ with $S^1=[0,1]/0\sim1$, and the image of 0 with $p\in\gamma$. Identify an $\epsilon$-neighborhood of $\gamma$ with the annulus $S^1\times(0,1)$, with coordinates chosen, with $\epsilon$ small enough, such that the intersections of the other curves in $\lambda\setminus\gamma$ with the annulus each consist of a single vertical arc. In these coordinates, let $\alpha$ be the arc $$\left\{ \left( t , \frac{1}{2}-t \right) : t\in[0,1] \right\}.$$

We will refer to the stabilization $\mathrm{stab}(\lambda,\alpha)$ by $\mathrm{stab}(\lambda,p,\gamma)$, which we identify with $\Gamma\cup\{\gamma',\gamma''\}$. Note that the stabilization obtained may depend on the realization $\lambda$ and on the point $p\in\gamma$ chosen, in the sense that it is possible that $\mathrm{stab}(\lambda,p,\gamma)$ and $\mathrm{stab}(\eta,q,\gamma)$ are $\mbox{Mod}^*(S_{g+1})$-inequivalent, either for $p\ne q$ or for non-isotopic realizations $\lambda \not\simeq \eta$.

In the case of this special version of stabilization, we note that there is a one-holed torus subsurface $\Sigma\subset S_{g+1}$ so that the curves in $\Gamma \setminus \{ \gamma \}$ are disjoint from $\Sigma$, and so that the curves $\gamma$, $\gamma'$, and $\gamma''$ are homotopic as properly embedded arcs in the surface with boundary $S_{g+1} \setminus \Sigma$.

\begin{lemma}
\label{stabilized 3-cubes general 2}
{For each $\beta \in \Gamma \setminus \{\gamma\}$, the trios $\{\beta,\gamma,\gamma'\},\{\beta,\gamma,\gamma''\}\subset \mathrm{stab}(\lambda,p,\gamma)$ form triangles in $S_{g+1}$.}
\end{lemma}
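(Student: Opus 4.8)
The plan is to exploit the structural description already in hand: by the remark preceding the lemma, the special stabilization $\mathrm{stab}(\lambda,p,\gamma)$ sits inside $S_{g+1}$ so that there is a one-holed torus subsurface $\Sigma$ containing the Dehn-twist operation, the curves $\gamma,\gamma',\gamma''$ all restrict to homotopic properly embedded arcs in $S_{g+1}\setminus\Sigma$, and every $\beta\in\Gamma\setminus\{\gamma\}$ lies entirely in $S_{g+1}\setminus\Sigma$ and is disjoint from $\Sigma$. So first I would fix a realization in which $\beta$, $\gamma$, $\gamma'$, $\gamma''$ are in (pairwise) minimal position and $\beta$ misses $\Sigma$. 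Since $\mathrm{stab}(\lambda,p,\gamma)$ is a complete $1$-system on $S_{g+1}$ (stated in \S\ref{stabilizing}), each of the pairs $(\beta,\gamma)$, $(\beta,\gamma')$, $(\beta,\gamma'')$, and $(\gamma,\gamma')$, $(\gamma,\gamma'')$ meets exactly once, and I want to produce an embedded triangular disk with one side on each of $\beta,\gamma,\gamma'$ (and likewise for $\gamma''$). By Lemma \ref{triangles} it suffices to find one realization in which such a triangle appears.

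The key step is a local picture near $\Sigma$. In the annular $\epsilon$-neighborhood coordinates of \S\ref{stabilizing}, $\gamma$ is the core circle $S^1\times\{1/2\}$, $\gamma'$ runs from the arc $\alpha$ outside across the handle and back, and $\gamma''$ is the Dehn twist of $\gamma'$ about the core. Since $\beta$ crosses the annulus in a single vertical arc near $p$, in a neighborhood of that vertical arc the three curves $\gamma,\gamma',\gamma''$ all run roughly parallel to the core circle (they are homotopic as arcs outside $\Sigma$), and $\gamma'$ and $\gamma''$ cross each other exactly once inside $\Sigma$. Choosing the point $p$ adjacent to the single vertical arc of $\beta$ and drawing $\gamma'$, $\gamma''$ so that both emanate from $\alpha$ on the same side as $\beta$'s crossing, the three intersection points $\beta\cap\gamma$, $\beta\cap\gamma'$, $\beta\cap\gamma''$ (equivalently, for the two triangles I want, the points $\beta\cap\gamma$, $\beta\cap\gamma'$, $\gamma\cap\gamma'$) are mutually visible and bound an embedded triangular region in the complement of $\Gamma\cup\{\gamma',\gamma''\}$. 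One then checks this region contains no further arc of the curve system by taking it small enough (innermost, as in the proof of Lemma \ref{tool1}), and that it embeds under the covering map by the same Jordan-curve argument used there.

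I expect the main obstacle to be nothing deep but purely a matter of bookkeeping the local configuration near $p$: one must verify that the arc $\alpha$ defining the stabilization, the twisting direction producing $\gamma''$, and the side on which $\beta$ crosses the annulus are all compatible so that the desired triangle is genuinely present and not obstructed by the fourth curve of the trio (i.e. that $\{\beta,\gamma,\gamma'\}$ has a triangle with the other curves absent from its interior). This is exactly the sort of checking already performed in Lemma \ref{stabilized 3-cubes general 1} for the trio $\{\alpha',\alpha'',\gamma\}$, and the argument here is a small variant: replace the role of $\gamma$ there by $\beta$, using that $\beta$ crosses the handle region in a single controlled arc. Once the local model is fixed, Lemma \ref{triangles} upgrades the existence of one good realization to the statement for all realizations, completing the proof.
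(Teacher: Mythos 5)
Your proposal is correct and follows the paper's intended route: the paper states this lemma without proof, treating it as immediate from exactly the local model you describe (the annular $\epsilon$-neighborhood in which each $\beta$ meets the annulus in a single vertical arc while $\gamma$, $\gamma'$, $\gamma''$ run parallel to the core outside $\Sigma$). One small correction: $p$ is part of the data $\mathrm{stab}(\lambda,p,\gamma)$, so you may not choose it adjacent to $\beta$'s vertical arc — but no such choice is needed, since $\gamma'$ (and $\gamma''$) follows $\gamma$ around the entire annulus crossing each vertical arc once, so the points $\beta\cap\gamma$, $\beta\cap\gamma'$, $\gamma\cap\gamma'$ always bound an embedded (possibly long and thin) triangular disk inside the annulus, wherever $p$ lies; also note that the definition of forming a triangle involves only the trio, so your final check that other curves avoid the disk's interior is superfluous.
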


Note that Lemmas \ref{tool1} and \ref{stabilized 3-cubes general 1} imply that any maximum complete 1-system obtained via stabilization has dual cube complex of dimension at least three, and Lemma \ref{stabilized 3-cubes general 2}, with Theorem \ref{3-to-n} then implies that any obtained via the special case above has dimension at least four.

\begin{remark} All of the maximum complete 1-systems in our construction can be obtained by a sequence of the more specialized stabilizing procedure, applied successively to the canonical example in genus 2. In particular, they all have dual cube complexes of dimension at least three. Question \ref{Stab} in \S\ref{intro} presents itself.
\end{remark}

We fix choices of almost realizations for $\Gamma(\epsilon)$ as in \S\ref{polygon section}, which we denote as well by $\Gamma(\epsilon)$, the distinction being clear from context.

\begin{lemma}
\label{stabilized 3-cubes}
{For any choice of $p\in\delta$, any trio from $A(\epsilon)\subset\mathrm{stab}(\Gamma(\epsilon),p,\delta)$ (resp.~$B(\epsilon)\subset\mathrm{stab}(\Gamma(\epsilon),p,\delta)$) forms a triangle in $S_{g+1}$.}
\end{lemma}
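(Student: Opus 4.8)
The plan is to leverage the structural results already established for the original system $\Gamma(\epsilon)$ on $S_g$, together with the behavior of triangles under the special stabilization procedure recorded in Lemma \ref{stabilized 3-cubes general 1}. First I would recall from Lemma \ref{examples mc1s} that $\Gamma(\epsilon)$ is a maximum complete 1-system, and that by the intersection computations in \S\ref{construction section} the curves in $A(\epsilon)$ pairwise intersect exactly once. Combining Lemma \ref{3cubes}(1) with Lemma \ref{tool1}, any trio $\{\alpha_i,\alpha_j,\alpha_k\}\subset A(\epsilon)$ forms a triangle in the complement on $S_g$ — indeed, the whole set $A(\epsilon)$ forms a cube of dimension $2|\epsilon^{-1}(1)|$ in $C(\Gamma(\epsilon))$ by Lemma \ref{max cubes}, so every sub-triple is a $3$-cube, hence (Lemma \ref{tool1}) bounds a triangular component. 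The same holds for $B(\epsilon)$.

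Next I would invoke the second sentence of Lemma \ref{stabilized 3-cubes general 1}: the stabilization $\mathrm{stab}(\Gamma(\epsilon),p,\delta)$ is a special case of $\mathrm{stab}(\lambda,\alpha)$, and Lemma \ref{stabilized 3-cubes general 1} tells us that if three curves \emph{do} form a triangle in $\Gamma(\epsilon)$ on $S_g$, then — wait, that lemma only promises the \emph{converse} (non-triangles stay non-triangles). So the crucial point to argue directly is that a triangle in $S_g$ persists to a triangle in $S_{g+1}$ after the stabilization. For this I would observe that the stabilizing modification is supported in a small disk neighborhood disjoint from the curves of $A(\epsilon)$ (we delete tiny disks in the complement of all curves of $\lambda$ and glue an annulus there), so a realization of $A(\epsilon)$ on $S_g$ with a triangular complementary component restricts to a realization on $S_{g+1}$ with that same triangular component intact — the annulus is glued far away from it. Concretely: fix an almost-realization of $\Gamma(\epsilon)$ in which the curves of $A(\epsilon)$ meet at a single point (as in \S\ref{polygon section}); the arc $\alpha$ defining the stabilization lies in an $\epsilon$-neighborhood of $\delta$, and by choosing the neighborhood small and its position generic we may assume the disks excised at the endpoints of $\alpha$ miss all of $A(\epsilon)$ and all of $B(\epsilon)$. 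Then any triangular region witnessed among three up curves on $S_g$ survives verbatim on $S_{g+1}$.

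The main obstacle — really the only subtlety — is making sure the ``far away'' claim is legitimate: one must check that the disk neighborhood of $\delta$ carrying the stabilization arc, together with the two excised disks, can be taken disjoint from the up curves $A(\epsilon)$ (and separately the down curves $B(\epsilon)$), despite the fact that each such curve crosses $\delta$ once. This is handled by shrinking $\epsilon$ and sliding the excised endpoint-disks of $\alpha$ along $\delta$ into a sub-arc of $\delta$ that avoids all the intersection points with $A(\epsilon)\cup B(\epsilon)$ — possible since those intersections are a finite set of points on $\delta$. I would conclude: since every trio from $A(\epsilon)$ forms a triangle disjoint from the stabilization region, it continues to form a triangle in $\mathrm{stab}(\Gamma(\epsilon),p,\delta)\subset S_{g+1}$; the argument for $B(\epsilon)$ is identical by the symmetry exchanging up and down curves. $\qed$
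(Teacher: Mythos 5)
Your approach is essentially the same as the paper's: the triangular components of the complement of any trio from $A(\epsilon)$ lie away from the stabilizing region, so they survive the surgery from $S_g$ to $S_{g+1}$. Your self-correction that Lemma \ref{stabilized 3-cubes general 1} only gives the contrapositive direction is well taken; the paper's proof likewise argues persistence of triangles directly rather than citing that lemma.

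One imprecision worth flagging: you frame the ``only subtlety'' as checking that an entire neighborhood of $\delta$ carrying the stabilizing arc can be made disjoint from $A(\epsilon)\cup B(\epsilon)$, but this is impossible since every such curve crosses $\delta$ once, so the stabilizing arc $\alpha$ must cross them too. What actually needs to be (and can be) arranged is much weaker: that the two \emph{excised disks} at the endpoints of $\alpha$ lie off all the curves \emph{and} off the triangular regions. You verify the former (placing the disks on a sub-arc of $\delta$ avoiding the finitely many intersection points) but not explicitly the latter; if an excised disk were to sit \emph{inside} a triangular region, that region would cease to be a disk after the annulus is glued in. The paper's terse proof handles this by observing that the triangles are disjoint from the stabilizing arc itself (which tracks $\delta$ closely, whereas the triangles localize near the common intersection point of the up curves and the partner intersections). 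This is easily supplied, so the gap is cosmetic rather than substantive.
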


\begin{proof}
{In the chosen realizations, the triangles formed among the curves of $A(\epsilon)$ are all disjoint from the stabilizing arcs (which track $\delta$ very closely), and from the disks chosen in the stabilization process. Thus the triangles persist.}
\end{proof}

\begin{proposition}
\label{MCG-orbit restrictions stabilized}
{If $\phi\in \mbox{Mod}^{*}(S_{g+1})$ satisfies $$\phi\cdot\mathrm{stab}(\Gamma(\epsilon),p,\delta)=\mathrm{stab}(\Gamma(\epsilon'),p,\delta), \text{ then}$$ 
\begin{enumerate}
\item $\phi\cdot \{\delta,\delta',\delta''\}=\{\delta,\delta',\delta''\}$, and
\item $\phi\cdot \left\{A(\epsilon),B(\epsilon)\right\}=\left\{A(\epsilon'),B(\epsilon')\right\}$.
\end{enumerate}}
\end{proposition}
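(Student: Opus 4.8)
The plan is to mimic the proof of Proposition \ref{MCG-orbit restrictions}, replacing the role of $\delta$ by the triple $\{\delta,\delta',\delta''\}$ and using the enlarged list of maximal cubes that arises after stabilization. First I would assemble the analogue of Lemma \ref{3cubes} for $\mathrm{stab}(\Gamma(\epsilon),p,\delta)$. Since the stabilizing arc tracks $\delta$ closely and sits in a one-holed torus $\Sigma$ disjoint from $A(\epsilon)\cup B(\epsilon)\setminus\{\delta\}$, Lemmas \ref{stabilized 3-cubes general 1}, \ref{stabilized 3-cubes general 2}, and \ref{stabilized 3-cubes} identify exactly which new triples form triangles: $\{\delta,\delta',\delta''\}$ itself forms a triangle (by Lemma \ref{stabilized 3-cubes general 1}, since each of $\delta',\delta''$ crosses $\delta$ once inside the annulus), each $\{\beta,\delta,\delta'\}$ and $\{\beta,\delta,\delta''\}$ forms a triangle for $\beta\in\Gamma(\epsilon)\setminus\{\delta\}$ (Lemma \ref{stabilized 3-cubes general 2}), and every triangle not involving the stabilizing data is inherited from $\Gamma(\epsilon)$ unchanged (second sentence of Lemma \ref{stabilized 3-cubes general 1}, plus Lemma \ref{stabilized 3-cubes}). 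Combining this with Lemma \ref{3cubes} and Corollary \ref{3-to-n subset} gives a complete list of $n$-cubes in $C(\mathrm{stab}(\Gamma(\epsilon),p,\delta))$.

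Next I would deduce the list of maximal cubes, the analogue of Lemmas \ref{max cubes} and \ref{max cubes 2}. The key structural point is that $\{\delta,\delta',\delta''\}$ is a trio of mutually-triangle-forming curves contained in $\Sigma$, and no fourth curve forms triangles with all of them: a fourth curve $\gamma$ would have to form a triangle with both $\delta'$ and $\delta''$, but $\gamma\in\Gamma(\epsilon)\setminus\{\delta\}$ forms a triangle with $\delta',\delta''$ only together with $\delta$, never pairwise with a fourth such curve — here I would use Theorem \ref{3-to-n} applied to the $1$-system $\{\delta,\delta',\delta'',\gamma\}$ on $S_{g+1}$, noting $\gamma$ is simple so a $4$-cube is impossible exactly as in the last paragraph of Lemma \ref{tool1}. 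Thus the hyperplane of $\delta$ now passes through $3$-dimensional maximal cubes (the cube $\{\delta,\delta',\delta''\}$), and more generally through $4$-cubes of the form $\{\beta,\gamma,\delta,\delta'\}$-type (partner pairs with $\delta,\delta'$ or $\delta,\delta''$), whereas $\delta',\delta''$ pass through only such low-dimensional maximal cubes; by contrast, by Lemma \ref{stabilized 3-cubes} every $\alpha_i$ and $\beta_i$ still lies in one of the two large even-dimensional maximal cubes $A(\epsilon)$ and $B(\epsilon)$ (now of dimension $2|\epsilon^{-1}(1)|$ and $2|\epsilon^{-1}(-1)|$, or $2g$ and $2g$ after stabilization-free counting — the point is only that these are strictly larger than $4$ once $g$ is large, and for small $g$ one checks the finitely many cases directly, or invokes that $\delta',\delta''$ never appear in them).

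With these lists in hand the proof concludes exactly as in Proposition \ref{MCG-orbit restrictions}: an isomorphism $\Phi:C(\mathrm{stab}(\Gamma(\epsilon),p,\delta))\cong C(\mathrm{stab}(\Gamma(\epsilon'),p,\delta))$ sends maximal cubes to maximal cubes, hence sends the unique trio $\{\delta,\delta',\delta''\}$ (the only triple whose three hyperplanes mutually bound a $3$-cube and whose members lie in no large even cube) to $\{\delta,\delta',\delta''\}$ on the other side, giving (1) via Theorem \ref{CubeChar}; and the two large even-dimensional maximal cubes corresponding to $A(\epsilon),B(\epsilon)$ must go to the pair $A(\epsilon'),B(\epsilon')$, giving (2). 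I expect the main obstacle to be bookkeeping the maximal-cube list cleanly — in particular verifying that $\delta'$ and $\delta''$ are not absorbed into any large cube and that the cube $\{\delta,\delta',\delta''\}$ is genuinely maximal rather than a face of some $4$-cube involving a partner pair; this is where the one-holed-torus picture and the simplicity of the curves (via Theorem \ref{3-to-n} and the argument in Lemma \ref{tool1}) must be invoked carefully, and where the small-genus cases $|A(\epsilon)|$ or $|B(\epsilon)|\le 1$ need to be handled separately as in Lemma \ref{max cubes 2}.
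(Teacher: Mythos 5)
Your overall strategy (list the maximal cubes of $C(\mathrm{stab}(\Gamma(\epsilon),p,\delta))$ and use the fact that cube complex isomorphisms send maximal cubes to maximal cubes, as in Proposition \ref{MCG-orbit restrictions}) is exactly the paper's, but your key structural claim is false, and it is contradicted by the very lemmas you cite. You assert that no fourth curve forms triangles with all pairs from $\{\delta,\delta',\delta''\}$, so that this trio spans a maximal $3$-cube and $\delta,\delta',\delta''$ meet only maximal cubes of dimension $3$ or $4$. But for any $\beta\in\Gamma(\epsilon)\setminus\{\delta\}$, Lemma \ref{stabilized 3-cubes general 1} gives the triangle $\{\delta',\delta'',\beta\}$ and Lemma \ref{stabilized 3-cubes general 2} gives the triangles $\{\delta,\delta',\beta\}$ and $\{\delta,\delta'',\beta\}$; together with the triangle $\{\delta,\delta',\delta''\}$ (Lemma \ref{stabilized 3-cubes general 1} with $\gamma=\delta$), Theorem \ref{3-to-n} applied to the $1$-system $\{\delta,\delta',\delta'',\beta\}$ produces a $4$-cube containing all four hyperplanes --- this is precisely how the paper observes that these stabilizations have dual complexes of dimension at least four. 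Your appeal to the last paragraph of Lemma \ref{tool1} is a misapplication: that argument excludes a $4$-cube only when the system consists of three curves, because two of four pairwise-crossing lifts would then be lifts of the same simple curve; with four distinct curves there is no such obstruction.

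The correct bookkeeping, which is what the paper's proof uses, is that the maximal cubes through $\delta$, $\delta'$, and $\delta''$ are odd-dimensional: since $\delta'$ and $\delta''$ run parallel to $\delta$ outside the one-holed torus $\Sigma$, every triangle that $\delta$ forms with a partner pair persists for them, yielding (when $A(\epsilon)$ and $B(\epsilon)$ are nonempty) the $7$-cube on $\{\delta,\delta',\delta'',\alpha_{2i-1},\alpha_{2i},\beta_{2j-1},\beta_{2j}\}$, while the hyperplanes of curves in $A(\epsilon)$ (resp.~$B(\epsilon)$) still lie in the even-dimensional maximal cube of dimension $|A(\epsilon)|$ (resp.~$|B(\epsilon)|$) by Lemma \ref{stabilized 3-cubes}. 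It is this even/odd dichotomy of the dimensions of maximal cubes through a hyperplane (not maximality of a $3$-cube) that forces an isomorphism to preserve $\{\delta,\delta',\delta''\}$ and to send the pair of big even cubes for $\epsilon$ to the pair for $\epsilon'$, after which conclusions (1) and (2) follow via Theorem \ref{CubeChar} exactly as in Proposition \ref{MCG-orbit restrictions}. Your fallback criterion (``members lie in no large even cube'') points in the right direction, but as written your maximal-cube list is wrong, so the argument does not go through without this correction.
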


\begin{proof}
{By Lemma \ref{stabilized 3-cubes}, the hyperplanes corresponding to curves of $A(\epsilon)$ (resp.~$B(\epsilon)$) pass through maximal cubes of dimension $|A(\epsilon)|$ (resp.~$|B(\epsilon)|$), which is even. The hyperplanes corresponding to $\delta$, $\delta'$, and $\delta''$ pass through maximal cubes of odd dimension (e.g.~the cube corresponding to the curves $\delta$, $\delta'$, $\delta''$, $\alpha_{2i-1}$, $\alpha_{2i}$, $\beta_{2j-1}$, and $\beta_{2j}$, when $|A(\epsilon)|,|B(\epsilon)|\ge1$). The conclusion follows as in the proof of Proposition \ref{MCG-orbit restrictions}.}
\end{proof}

We are now able to prove Proposition \ref{stabilizing Gammas}:

\begin{proof}[Proof of Proposition \ref{stabilizing Gammas}]
{Suppose $\phi\in \mbox{Mod}^*(S_{g+1})$ sends $\mathrm{stab}(\Gamma(\epsilon),p,\delta)$ to $\mathrm{stab}(\Gamma(\epsilon'),p,\delta)$, so that by Proposition \ref{MCG-orbit restrictions stabilized} we have that $\phi$ preserves the set $\{\delta,\delta',\delta''\}$.

Consider the one-holed torus subsurface $\Sigma \subset S_{g+1}$ that arises in the course of the stabilizations $\mathrm{stab}(\Gamma(\epsilon),p,\delta)$. Consider the mapping class $\tilde{\phi}$ induced on $S_g$ by deleting $\Sigma$ and identifying the resulting boundary component to a point. Since the curves of $\Gamma(\epsilon)\setminus \{\delta\}$ (and likewise $\Gamma(\epsilon')\setminus\{\delta\}$) can be made disjoint from $\Sigma$, the induced map $\tilde{\phi}$ takes the induced curves $\Gamma(\epsilon)\setminus \{\delta\}$ to $\Gamma(\epsilon')\setminus\{\delta\}$. Moreover, in the resulting surface $\delta,\delta',$ and $\delta''$ go to the same homotopy class of curve, namely $\delta \subset S_g$. Since $\phi\cdot \{\delta,\delta',\delta''\}=\{\delta,\delta',\delta''\}$, we find that $\tilde{\phi}\cdot \delta = \delta$, so that $\Gamma(\epsilon)$ and $\Gamma(\epsilon')$ are equivalent under the action of $\mbox{Mod}^*(S_g)$.}
\end{proof}

\end{document}